\documentclass[11pt,reqno,a4paper]{amsart}

\usepackage{fbb}
\usepackage[utf8]{inputenc}
\usepackage[T1]{fontenc}
\usepackage{amsmath,amsthm,amssymb,amsfonts,mathtools,mathrsfs}
\usepackage{microtype}
\usepackage{graphicx}
\usepackage{hyperref}
\hypersetup{
  colorlinks=true,
  linkcolor=blue!50!green,
  citecolor=blue!50!green,
  urlcolor=blue!50!green,
  pdfauthor={Mayukh Mukherjee},
  pdftitle={An Approximate Inverse Spectral Theorem for Manifolds of Constant Negative Curvature}
}
\usepackage[capitalise,nameinlink]{cleveref}

\usepackage[most]{tcolorbox}

\usepackage[top=2.5cm,bottom=2.4cm,left=2.6cm,right=2.6cm,headsep=0.2in]{geometry}
\newtheorem{theorem}{Theorem}[section]
\newtheorem{lemma}[theorem]{Lemma}
\newtheorem{corollary}[theorem]{Corollary}
\newtheorem{proposition}[theorem]{Proposition}
\theoremstyle{definition}

\newtheorem{remark}[theorem]{Remark}

\numberwithin{equation}{section}

\newcommand{\Vol}{\operatorname{Vol}}
\newcommand{\diam}{\operatorname{diam}}
\newcommand{\supp}{\operatorname{supp}}
\usepackage{fancyhdr}
\title[Approximate Inverse Spectral Problem]{An Approximate Inverse Spectral Theorem for Manifolds of Constant Negative Curvature}
\author{Mayukh Mukherjee}

\begin{document}

\begin{abstract}
A classical theorem of Colin de Verdi\`ere shows that on a closed manifold of fixed topology one can prescribe an arbitrary finite portion of the Laplace-Beltrami spectrum (including multiplicities, subject to the usual topological constraints) by choosing a sufficiently heterogeneous smooth metric. In this paper, we study the same inverse problem under the rigid geometric constraint of \emph{constant negative sectional curvature}. Allowing the topological complexity to vary, we prove that any finite strictly increasing target list can be approximated to arbitrary precision by a closed manifold of constant negative curvature in any dimension $d\ge2$. In $d=2$ the construction uses hyperbolic collar degeneration and the discrete spectral limit theorems of Burger, building on the collar estimates of Buser; in $d\ge3$ we build macroscopically heterogeneous hyperbolic covering manifolds assembled from ``heavy'' vertex clusters and ``long'' corridor chains whose low-energy limit is a prescribed \emph{discrete} graph Laplacian. We also record the universal obstructions at curvature normalization $\kappa\equiv -1$: Yang-Yau in $d=2$ and Kazhdan-Margulis combined with Bishop--Gromov volume comparison in $d\ge3$. In particular, $\lambda_1$ is universally bounded at $\kappa=-1$, so target lists whose first positive eigenvalue exceeds this bound cannot be approximated within the class $\kappa\equiv -1$, and accommodating arbitrarily large prescribed $\lambda_1^*$ forces $|\kappa|\to\infty$. A corollary on the arbitrarily precise prescription of scale-invariant eigenvalue ratios at $\kappa\equiv -1$ and an explicit worked example are included.
\end{abstract}

\maketitle
\section{Introduction}

The inverse spectral problem for the Laplace-Beltrami operator asks to what
extent the geometry and topology of a Riemannian manifold can be prescribed to
produce a specific target spectrum. In a seminal paper \cite{CdV1987}, Colin de
Verdi\`ere proved that on a compact connected manifold of fixed topology one can
choose a smooth metric whose first finitely many non-zero Laplace eigenvalues
realize any prescribed finite list, with prescribed multiplicities subject to
the usual topological constraints. In this paper we restrict to the
simple-spectrum case; in particular, on a closed surface of genus $\gamma\ge2$
one may realize any strictly increasing list $0<\lambda_1^*<\dots<\lambda_n^*$
as the first $n$ non-zero eigenvalues.

The metrics produced by this general construction are highly heterogeneous. A
natural geometric question is whether a similar spectral prescription remains
possible under the rigid constraint of \emph{constant negative
sectional curvature}. This is a genuinely restrictive condition: for each fixed topological type,
constant-curvature metrics form a finite-dimensional deformation space
(parametrized by the curvature scale and, in $d=2$, by the Teichm\"uller
moduli), in contrast to the infinite-dimensional freedom of general
Riemannian metrics.

In dimension $d=2$, constant curvature $-1$ surfaces still admit continuous
deformations (Teichm\"uller space has dimension $6\gamma-6$ for $\gamma\ge 2$), and classical
collar degeneration provides a mechanism for spectral control: as a separating
geodesic is pinched, the surface develops a bottleneck whose conductance tends
to zero, producing a small eigenvalue governed by a discrete graph Laplacian.
This mechanism was analyzed by Burger~\cite{Burger1988,Burger1990}, building on the collar estimates of
Buser~\cite{Buser1992}.

In dimensions $d\ge 3$, the situation is fundamentally different because of
\textbf{Mostow Rigidity} \cite{Mostow1973}: a closed hyperbolic manifold of
dimension $d\ge 3$ admits a \emph{unique} metric of constant curvature $-1$, up
to isometry. The only remaining geometric freedom is the global curvature
scale~$\kappa$, which multiplies all eigenvalues by
$|\kappa|$. Thus, after normalizing to $\kappa\equiv -1$, there is no
continuous deformation of the spectrum at fixed topology; equivalently,
\emph{the scale-invariant eigenvalue ratios $\lambda_k/\lambda_1$ are
completely determined by the topology}. Varying the topology is therefore a
logical necessity, not merely a convenience. This makes the
higher-dimensional inverse problem qualitatively harder and motivates the
covering-space construction that is one of the main contributions of this paper.

Our main result shows that, despite the aforementioned rigidity constraints,
\emph{approximate} spectral universality holds in all dimensions $d\ge 2$,
provided the topology is allowed to vary.

\begin{theorem}[Approximate Inverse Spectral Theorem]
\label{thm:main}
Let $d \ge 2$ be an integer, and let $0 = \lambda^*_0 < \lambda^*_1 < \dotsb < \lambda^*_n$ be a finite, strictly increasing sequence of real numbers. For any error tolerance $\varepsilon > 0$, there exists a closed, connected $d$-dimensional Riemannian manifold $(M, g)$ such that:
\begin{enumerate}
    \item \emph{Geometric rigidity:} $g$ has constant negative sectional curvature $\kappa<0$.
    \item \emph{Spectral approximation:} the first $n+1$ eigenvalues of $\Delta_g$ in the standard non-decreasing enumeration (with repetitions for multiplicities) satisfy
    \[
        \lvert \lambda_i(M, g) - \lambda^*_i \rvert < \varepsilon \quad \text{for all } i = 0,1,\dots,n.
    \]
\end{enumerate}
Moreover, there exists a constant $\Lambda_d>0$ (see Section~\ref{sec:obstruction}) such that every closed $d$-manifold of constant sectional curvature $\kappa<0$ (orientable, if $d=2$) satisfies
\[
\lambda_1(M,g)\le \Lambda_d |\kappa|.
\]
In particular, targets with $\lambda_1^*>\Lambda_d$ cannot be approximated to arbitrary precision within the curvature normalization $\kappa\equiv -1$, and any approximating sequence with $\varepsilon\to 0$ must satisfy $\liminf |\kappa|\ge \lambda_1^*/\Lambda_d$.
\end{theorem}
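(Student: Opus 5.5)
The plan has two parts: a construction giving the approximation, and a universal upper bound giving the obstruction. The construction begins with two reductions. First, by scaling, $\lambda_i(M,c^{-2}g)=c^2\lambda_i(M,g)$ and the curvature scales by the same factor. So it suffices to approximate the target up to a common positive multiple, that is, to approximate the ratios $\lambda_i^*/\lambda_1^*$, and then rescale; the final curvature is whatever it has to be. Second, we realize the target as the nonzero spectrum of a weighted graph Laplacian. We take $L=D^{-1/2}\,\mathcal{L}_c\,D^{-1/2}$ on $n+1$ vertices with masses $m_v>0$ and edge conductances $c_{uv}\ge 0$. The inverse eigenvalue problem is solvable for any simple list $0<\mu_1<\dots<\mu_n$, for instance with a path (Jacobi matrix) graph: a Jacobi matrix with positive off-diagonals can be built with any prescribed simple spectrum, and the zero row-sum constraint is handled by the standard Hochstadt/Hald reconstruction applied to the Stieltjes-type data. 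The target is now $\mu_i=\lambda_i^*$ up to scale.

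\textbf{Case $d=2$.} Take $n+1$ hyperbolic surfaces $S_v$ with boundary. Each has area proportional to $m_v$ (achievable by taking genus $\gamma_v$ large and roughly proportional to $m_v$, after rational approximation of the masses). Glue them along separating geodesics of lengths $\ell_{uv}=t\,c_{uv}$, where $t\to 0$. By the Burger and Buser collar estimates, the first $n$ small eigenvalues satisfy $\lambda_i/t\to \frac{1}{\pi}\mu_i$ up to the explicit normalization of conductance by area. All other eigenvalues stay bounded below by a constant depending on the thick pieces, which are fixed. Rescaling by $1/t$ finishes this case. I also need the pieces to have no small eigenvalues of their own; this is ensured by choosing each $S_v$ with $\lambda_1$ of its Neumann problem bounded away from $0$, as with random or arithmetic pieces.

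\textbf{Case $d\ge3$.} Mostow rigidity forces the use of finite covers. Fix a closed hyperbolic manifold $N$ containing a properly embedded, non-separating totally geodesic hypersurface (arithmetic examples exist in every dimension). Cutting along it and gluing copies according to a graph $G$ gives a cover $M_G$ whose low spectrum is governed by a graph Laplacian of $G$. This is a Brooks-type comparison: $\lambda_k(M_G)$ is comparable to the combinatorial eigenvalues, with two-sided constants, via discretization.

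To make the limit exact rather than merely comparable, I take the following graphs.
\begin{itemize}
\item Vertex $v$ is replaced by a \emph{heavy cluster}: a large expander of $\approx K m_v$ copies. It has a spectral gap bounded below uniformly, so the cluster behaves as one mass.
\item Edge $uv$ is replaced by a \emph{corridor}: about $c_{uv}^{-1}\cdot\text{const}$ parallel long chains, or a single chain of length $\propto 1/c_{uv}$ with appropriate width. The effective conductance is then $\propto c_{uv}/K'$.
\end{itemize}
With the scaling $K\to\infty$, the quotient of conductance by mass tends to $0$ at a common rate $s$. A Rayleigh-quotient argument then gives $\lambda_i(M)/s\to\mu_i$ for $i\le n$. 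The upper bound comes from test functions constant on clusters and harmonic (linear) along corridors. The lower bound comes from the min-max comparison of Colbois–Courtois/Mantuano type, using the uniform gap inside clusters and Poincaré inequalities on fundamental domains. The same argument also gives $\lambda_{n+1}/s\to\infty$. This step is the main obstacle: the error terms between continuum and discrete must be $o(s)$, not just $O(s)$. In particular, the long corridors carry their own low modes, of order $1/\text{length}^2$. These must be made much larger than $s$, which I would try to arrange by making corridors wide with expander cross-sections rather than long.

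\textbf{Obstruction.} For $d=2$ orientable, Yang–Yau gives $\lambda_1\cdot\text{Area}\le 8\pi(\gamma+1)$. Gauss–Bonnet gives $\text{Area}=4\pi(\gamma-1)/|\kappa|$. Together these yield $\lambda_1\le \Lambda_2|\kappa|$, using $\gamma\ge2$. For $d\ge3$ at $\kappa=-1$, Kazhdan–Margulis gives a uniform $\epsilon_d$ such that some point $p$ has injectivity radius at least $\epsilon_d$. Take a Lipschitz test function that is a bump on $B(p,\epsilon_d/2)$, together with a second disjoint bump or a constant correction. Its Rayleigh quotient is bounded using Bishop–Gromov volume comparison of balls, giving $\lambda_1\le\Lambda_d$. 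Rather than a second bump, the cleaner route is Cheng's comparison $\lambda_1\le\lambda_1^D(B_{\mathbb H^d}(r))$, applied with two disjoint balls of radius $\epsilon_d/4$ once the volume is large; small-volume manifolds are finitely many by Wang's theorem. Scaling then gives the $|\kappa|$ dependence, and the final inequality $\liminf|\kappa|\ge\lambda_1^*/\Lambda_d$ follows immediately.
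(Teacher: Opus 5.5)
Your $d=2$ argument and the obstruction part are essentially the paper's: Burger pinching plus rescaling, Yang--Yau with Gauss--Bonnet, and a Kazhdan--Margulis thick point with Bishop--Gromov and two disjoint bumps. Replacing Colin de Verdi\`ere's $K_N$ theorem by a path graph with free masses is also fine: conjugate a Jacobi matrix with spectrum $\{0,\mu_1,\dots,\mu_n\}$ by its positive ground state.

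\emph{The gap is in $d\ge3$, at the very first step.} One embedded non-separating hypersurface $H$ gives a block $F$ with exactly two faces $H^\pm$. However you glue copies of $F$, each copy has exactly two gluing faces. This holds even if you glue $H^+$ to $H^+$, which is legitimate here because $H$ is totally geodesic. So the dual graph is $2$-regular and, being connected, a single cycle of identical blocks. A ``cluster'' of $\approx Km_v$ blocks is then just an arc of that cycle, with internal gap $\asymp (Km_v)^{-2}$ rather than a uniform one. No expander and no branching macroscopic graph can be produced, and the low spectrum is that of a uniform cycle, with no free parameters.

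What you need is a block with at least four faces: $D\ge2$ disjoint two-sided hypersurfaces with connected complement, equivalently $\pi_1(B)\twoheadrightarrow F_D$. The paper gets this from Lubotzky's largeness theorem plus the connected-cut-system argument (Proposition~\ref{prop:cut_system}). It then realizes arbitrary colored gluing patterns as Schreier graphs of $F_D$, pulled back along $f:B\to X_D$. Even then every cluster must expose matching in- and out-ports of each color. The paper arranges this with a directed Hamiltonian decomposition of $K_N$; a path graph would need e.g.\ doubled antiparallel corridors.

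\emph{The analytic core is also left open, and the tools you name cannot close it.} Brooks- and Mantuano-type discretizations compare eigenvalues only up to multiplicative constants, which is useless for prescribing ratios. Colbois--Courtois concerns convergence to a noncompact limit manifold, not to a graph. What is missing is an \emph{exact} asymptotic corridor conductance. The paper uses the periodic harmonic coordinate $\chi_i$ on the $\mathbb{Z}$-cover, with $\chi_i\circ T_i=\chi_i+1$ and $C_i=\int_F|\nabla\chi_i|^2$. Cauchy--Schwarz against $\nabla\chi_i$ gives $\int_W|\nabla u|^2\ge (C_i/K)(\langle u\rangle_+-\langle u\rangle_-)^2$. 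The profile $a+(b-a)\chi_i/K$ plus boundary layers gives the matching upper bound (Lemma~\ref{lem:corridor}). Trace estimates and the uniform cluster Poincar\'e inequality then replace flux averages by cluster means with $o(s)$ error.

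Your proposed cure for corridor low modes, wide expander cross-sections, is misdirected. With cluster mass $M$ and corridor length $\ell$ one has $s\asymp 1/(M\ell)$, while corridor modes are $\asymp \ell^{-2}$. So it suffices that $M\gg\ell$; widening corridors only raises conductance. The paper takes $M\sim m^3$ and $\ell\sim m$, so $s\sim m^{-4}$, while functions with vanishing cluster means have Rayleigh quotient $\ge c_0/m^2$ (Proposition~\ref{prop:parasitic}).

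Minor points:
\begin{enumerate}
\item The number of parallel chains should scale like $c_{uv}$, not $c_{uv}^{-1}$.
\item Wang's finiteness theorem fails for $d=3$ (Dehn filling). You do not need it: a point of injectivity radius $\ge\epsilon_d$ already forces $\diam\ge\epsilon_d$, which gives two disjoint balls of radius $\epsilon_d/4$.
\end{enumerate}
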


\begin{remark}[Topology of the approximating manifolds]
\label{rem:topology_cost}
The manifolds produced by the proof may be chosen so that, in the surface case ($d=2$), the genus depends only on $n$ (and diverges only as $n\to\infty$), while in dimensions $d\ge 3$, the covering degree (and hence the unrescaled volume at curvature $-1$) grows as $\varepsilon \to 0$.
\end{remark}

For $\varepsilon$ sufficiently small the first $n+1$ eigenvalues are simple: the target list is strictly increasing, and the construction ensures $\lambda_{n+1}(M,g)$ is bounded away from $\lambda_n^*$ (see the proof of Theorem~\ref{thm:main} in Section~\ref{sec:proof}).

Since $n$ is fixed, all convergence statements are uniform over the finite spectral window $0\le i\le n$.

\emph{Notation.} Throughout the paper, $\lambda_k(M,g)$ denotes the $k$th eigenvalue of the Laplace-Beltrami operator on $(M,g)$. In Section~\ref{sec:higher} we write $\nu_k(M_m)$ for the eigenvalues of the intermediate covering manifold $(M_m,g_{M_m})$ at curvature $-1$, before the final rescaling $g_m=m^{-4}g_{M_m}$ that produces $\lambda_k(M_m,g_m)=m^4\nu_k(M_m)$.

\begin{remark}[Geometric collapse]
\label{rem:collapse}
In the higher-dimensional construction ($d\ge 3$), the approximating manifolds $(M_m,g_m)$
have curvature $\kappa_m=-m^4\to -\infty$, and their volume satisfies
$\Vol(M_m,g_m)=O(m^{3-2d})\to 0$. Indeed, the total number of
blocks in $M_m$ is $Nm^3+O(m)$ (clusters plus corridors), each of volume $V_F$
in the $\kappa=-1$ metric. After rescaling $g_m=m^{-4}g_{M_m}$, volumes scale
by $m^{-2d}$, giving $\Vol(M_m,g_m)\sim N V_F m^{3-2d}$. Since $3-2d<0$ for
$d\ge 3$, the volume tends to zero. (Volume decay alone does not force Gromov-Hausdorff collapse to a point; the diameter estimate below is the decisive input.)

The diameter of $M_m$ at curvature $-1$ is $O(m)$ (the heavy clusters have combinatorial diameter $O(\log m)$ by the expander property, and the corridors have length $O(m)$). After rescaling by $m^{-4}$, distances scale by $m^{-2}$, giving $\diam(M_m,g_m)=O(m^{-1})\to 0$. Since $\diam\to 0$, the approximating sequence collapses in the Gromov-Hausdorff sense to a single point. (Whether such collapse is unavoidable for \emph{every} approximating family with $|\kappa|\to\infty$ and bounded low eigenvalues is a natural open question; we have not investigated this.)
\end{remark}

The obstruction statement in Theorem~\ref{thm:main} concerns the
\emph{absolute} scale of the spectrum. By contrast, scale-invariant ratios such
as $\lambda_k/\lambda_1$ are unaffected by curvature rescaling. In view of
Mostow Rigidity, for a fixed topology admitting a hyperbolic metric in $d\ge 3$ these ratios are completely determined; yet we
show they can be prescribed arbitrarily by varying the topology:

\begin{corollary}[Corollary~\ref{cor:eigenvalue_ratios}, stated informally]
For any $d\ge 2$, any target ratios $1=\mu_1^*<\cdots<\mu_n^*$, and any
$\varepsilon>0$, there exists a closed $d$-manifold with $\kappa\equiv -1$
whose eigenvalue ratios $\lambda_i/\lambda_1$ approximate $\mu_i^*$ to within
$\varepsilon$.
\end{corollary}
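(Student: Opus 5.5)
The corollary follows from Theorem~\ref{thm:main} by a rescaling argument, since the ratios $\lambda_i/\lambda_1$ do not change under homothety. Given targets $1=\mu_1^*<\dots<\mu_n^*$ and $\varepsilon>0$, fix a scale $s>0$ and set $\lambda_i^*=s\mu_i^*$ for $i=1,\dots,n$ and $\lambda_0^*=0$. This is a strictly increasing list, so Theorem~\ref{thm:main} applies. For a tolerance $\delta>0$ it gives a closed $d$-manifold $(M,g)$ of constant curvature $\kappa<0$ with $|\lambda_i(M,g)-s\mu_i^*|<\delta$ for $0\le i\le n$. Rescale to $\tilde g=|\kappa|g$. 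Then $\tilde g$ has curvature $\equiv -1$ and $\lambda_i(M,\tilde g)=\lambda_i(M,g)/|\kappa|$, so $\lambda_i(M,\tilde g)/\lambda_1(M,\tilde g)=\lambda_i(M,g)/\lambda_1(M,g)$. The ratios are therefore controlled by the unnormalized approximation alone.

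Next comes the elementary estimate. Choose $\delta<s/2$, which gives $\lambda_1(M,g)>s/2$. Then
\[
\left|\frac{\lambda_i(M,g)}{\lambda_1(M,g)}-\mu_i^*\right|
=\frac{|\lambda_i-s\mu_i^*-\mu_i^*(\lambda_1-s)|}{\lambda_1}
\le\frac{\delta(1+\mu_n^*)}{s/2}.
\]
Taking $\delta<\min\{s/2,\ \varepsilon s/(2(1+\mu_n^*))\}$ makes every ratio lie within $\varepsilon$ of its target. In particular, $\lambda_1>0$ holds automatically, so $M$ is connected and the ratios are well defined. The scale $s$ is arbitrary; $s=1$ works.

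There is one point of interpretation to settle. The obstruction part of Theorem~\ref{thm:main} says that absolute eigenvalues at $\kappa=-1$ are bounded by $\Lambda_d$. This creates no tension, because we only rescale after approximating, and the ratio is scale-free; we never need $\lambda_1(M,\tilde g)$ to be large. Consequently, in $d\ge3$ the resulting $\kappa\equiv-1$ manifold is the intermediate cover $M_m$ before the rescaling $g_m=m^{-4}g_{M_m}$, whose eigenvalues are $\nu_k(M_m)$. Its ratios $\nu_k/\nu_1$ are the same as those of $g_m$.

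I expect no serious obstacle here. The only thing to verify is that ``eigenvalue'' means the same enumeration, with multiplicities, before and after scaling, and this holds because scaling is monotone. All of the real content is in Theorem~\ref{thm:main} itself.
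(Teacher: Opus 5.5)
Your proposal is correct and is essentially the paper's proof. The paper also applies Theorem~\ref{thm:main} to the list $0<\mu_1^*<\cdots<\mu_n^*$ (your choice $s=1$) with a tolerance $\delta\le 1/2$, rescales by $|\kappa|$ to reach curvature $-1$ without changing the ratios, and bounds the ratio error by $\delta(1+\mu_n^*)/(1-\delta)$, which is the same estimate as your bound using $\lambda_1>s/2$. Your free scale $s$ and your observation that in $d\ge 3$ the resulting manifold is just the unrescaled cover $M_m$ are harmless additions.
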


\noindent In $d\ge 3$, this is arguably the main geometric content of the
paper: Mostow Rigidity forbids \emph{any} continuous variation of these ratios
on a fixed topology, yet by varying the topology we achieve arbitrary
prescription.

\subsection*{Strategy and relation to prior work}

The proof strategy in all dimensions is to reduce the continuous inverse
spectral problem to a \emph{discrete} one: first prescribe the spectrum of a
weighted graph Laplacian on the complete graph $K_N$ (which is possible by
Colin de Verdi\`ere's discrete inverse theorem \cite{CdV1988}), then realize
this graph Laplacian as the low-energy limit of a sequence of hyperbolic
manifolds.

In $d=2$, the realization step uses hyperbolic collar degeneration: pinching
a multicurve of $\binom{N}{2}$ disjoint geodesics whose complement has $N$
connected components and dual graph $K_N$, on a surface of genus
$\gamma=1+\frac{N(N-3)}{2}$ (each vertex piece has genus $0$ with $N-1$ boundary curves, giving Euler characteristic $3-N$; the $\binom{N}{2}$ annular collars contribute $\chi=0$, and the total $\chi=N(3-N)+0=2-2\gamma$ yields the formula), produces a discrete graph limit, following
Burger~\cite{Burger1988,Burger1990} and Buser~\cite[\S 8]{Buser1992}. Combined with
a global metric rescaling $g_\delta:=\delta g_{\Sigma_\delta}$ (which scales
the curvature to $|\kappa|\to\infty$ and the eigenvalues to their target
values), this yields Theorem~\ref{thm:main} for surfaces. The surface case is
thus a synthesis of classical results, and we present it in
Section~\ref{sec:surfaces} as a warm-up and template for the
higher-dimensional argument.

The guiding heuristic for the higher-dimensional case is this: since Mostow Rigidity forbids any deformation of the geometry, one must instead encode the spectral degrees of freedom in the way the topology is assembled: specifically, in the combinatorial pattern of a covering space. Suppose a hyperbolic manifold is built to look, at large scales, like a finite graph: heavy lumps (the vertices) connected by long thin passages (the edges). If the lumps are much heavier than the passages, then low-frequency eigenfunctions will not resolve the internal structure of any lump; they will only feel the pattern of connections, which is precisely the graph Laplacian of the underlying graph. The plan, therefore, is to engineer a hyperbolic covering manifold whose large-scale topology mirrors a target weighted graph, so that the low spectrum of the manifold approximates the spectrum of that graph.

The principal novelty of this paper lies in the case $d\ge 3$. Here collar degeneration is unavailable: Mostow Rigidity prevents any
local geometric deformation. Instead, we encode the degrees of freedom in
\emph{covering combinatorics}. Starting from a closed arithmetic hyperbolic
manifold with positive first Betti number (Millson~\cite{Millson1976}) and
passing to a finite cover to obtain a base manifold $B$ whose fundamental
group surjects onto a free group (Lubotzky~\cite{Lubotzky1996}), we build
finite covers
$M_m\to B$ governed by carefully designed Schreier graphs. Each macroscopic
vertex of $K_N$ is replaced by a ``heavy'' cluster of $m^3$ copies of a fixed
fundamental block $F$, internally wired as an expander graph; each macroscopic
edge becomes a ``long'' corridor chain of $O(m)$ blocks. The volume disparity
$m^3\gg m$, combined with the uniform Poincare gap provided by the expander wiring inside each cluster, forces low-energy eigenfunctions to be approximately constant on
clusters, with transitions governed by corridor conductances that converge to
the prescribed discrete edge weights. At a conceptual level, this is a \emph{discrete-to-continuum
homogenization} on a manifold glued from periodic cells,
where the macroscopic effective operator is the graph Laplacian $L_m$. The full spectral reduction (Theorem~\ref{thm:reduction}) is proved via min-max comparison using
the cluster Poincare inequality, quantitative trace estimates at ports, and
corridor energy bounds.

This covering construction is related to, but distinct from, several earlier
lines of work:
\begin{itemize}
\item Brooks \cite{Brooks1986} studied spectral convergence for towers of
coverings, showing that $\lambda_1$ of finite covers converges to the bottom of
the $L^2$ spectrum of the universal cover. Our construction uses coverings for a
different purpose: not to approximate a limit, but to \emph{engineer} a
prescribed discrete Laplacian by controlling the combinatorial structure of the
Schreier graph.
\item Colbois--Courtois \cite{ColboisCourtois1991} gave necessary and sufficient conditions for spectral convergence when a sequence of compact Riemannian manifolds degenerates, in the pointed Lipschitz sense, to a non-compact finite-volume limit (with applications to hyperbolic surfaces and $3$-manifolds). Their ``fleeing parts'' criterion (a Dirichlet eigenvalue lower bound on the escaping regions) is conceptually related to our corridor energy bounds, but the settings differ: Colbois--Courtois study a degeneration limit, whereas we engineer a prescribed spectrum on a fixed compact covering manifold with no non-compact limit involved.
\item The ``graph-like manifold'' spectral convergence literature
(see, e.g., Exner--Post \cite{ExnerPost}
and Grieser \cite{Grieser}) treats thin tubular neighborhoods of metric graphs. Our corridors
are related but differ in that the ambient manifold has \emph{constant
curvature} and the ``thickening'' comes from the fixed hyperbolic block~$F$
rather than from shrinking a transverse profile.
\item \v{S}ahovic \cite{Sahovic} studied metric approximation of compact
metric spaces by hyperbolic manifolds with $|\kappa|\to\infty$, in the
Gromov-Hausdorff sense. The present work addresses a \emph{spectral}
prescription problem rather than a metric approximation problem, and the
construction requires controlling eigenfunction behavior (via expander wiring
and min-max arguments) rather than only metric geometry.
\end{itemize}

\begin{remark}[Simple spectra only]
We restrict throughout to \emph{strictly increasing} (simple) target lists. This is not a genuine limitation for an $\varepsilon$-approximate theorem: if the desired target has multiplicities (e.g., $\{0,1,1,3\}$), one may apply Theorem~\ref{thm:main} to a nearby simple list (e.g., $\{0,1-\delta,1+\delta,3\}$ with $\delta<\varepsilon/2$) with tolerance $\varepsilon/2$, and the triangle inequality gives $|\lambda_k-\lambda_k^{\mathrm{orig}}|<\varepsilon/2+\delta<\varepsilon$ for the original degenerate target. The simple-spectrum hypothesis is imposed for notational convenience and because it simplifies the continuum construction. Whether the covering construction in $d\ge 3$ can produce \emph{exact} prescribed multiplicities under constant curvature is a separate and interesting open problem.
\end{remark}

\begin{remark}[Countability obstruction to exact prescription in $d\ge 3$]
\label{rem:countability}
The ``approximate'' nature of Theorem~\ref{thm:main} is not a limitation of the method but a necessary feature of the problem. Exact prescription of a single eigenvalue is easy by curvature rescaling. However, in $d\ge 3$, exact \emph{simultaneous} prescription of a generic $n$-tuple of eigenvalues ($n\ge 2$) cannot hold as a universal theorem. The reason is as follows. By Mostow Rigidity, a closed hyperbolic $d$-manifold is determined up to isometry by its fundamental group. Since fundamental groups of closed manifolds are finitely presented, there are only countably many closed hyperbolic $d$-manifolds (up to isometry) at curvature $\kappa=-1$. Allowing $\kappa$ to vary merely traces out a $1$-parameter ray in spectral space for each topology, so the set of all achievable spectra is a countable union of rays in $\mathbb{R}^n$. For $n\ge 2$, such a set has Lebesgue measure zero, making exact prescription of even two independent eigenvalues set-theoretically impossible for a generic target. The $\varepsilon$-approximation is therefore sharp as a structural statement. (Nevertheless, for specific algebraic targets-e.g., whether $\{0,1,3\}$ can be exactly achieved at some $\kappa=-1$-the question remains open.)
\end{remark}

\subsection*{Organization}
Section~\ref{sec:discrete} recalls the discrete inverse spectral theorem.
Section~\ref{sec:surfaces} treats the surface case as a warm-up.
Section~\ref{sec:higher} contains the higher-dimensional covering construction
and spectral reduction. Section~\ref{sec:proof} assembles the proof of
Theorem~\ref{thm:main}. Section~\ref{sec:obstruction} records the universal
eigenvalue obstructions at $\kappa=-1$. Section~\ref{sec:ratios} proves the
eigenvalue ratio corollary. Section~\ref{sec:example} gives an explicit worked
example.

\section{Spectral Universality of Discrete Complete Graphs}
\label{sec:discrete}

We use the inverse spectral theory of discrete weighted graph Laplacians on complete graphs.

Let $G=(\mathcal V,E)$ be a finite connected graph with $N=|\mathcal V|$ vertices.
Fix strictly positive vertex measures $\nu_i>0$ for $i\in\mathcal V$ and symmetric edge weights
$w_{ij}=w_{ji}>0$ for $\{i,j\}\in E$. The weighted combinatorial Laplacian $L_G$ acts on
$f\in\mathbb{R}^{\mathcal V}$ by
\begin{equation}
\label{eq:discrete_laplacian}
    (L_G f)(i) = \frac{1}{\nu_i} \sum_{j\sim i} w_{ij} (f(i)-f(j)).
\end{equation}
It is self-adjoint on $\ell^2(\mathcal V,\nu_i)$ and has spectrum
$0=\mu_0<\mu_1\le \cdots \le \mu_{N-1}$.

\begin{lemma}[Colin de Verdi\`ere {\cite[\S 4, Theor\`eme~1]{CdV1988}}]
\label{lem:cdv_complete}
Let $G=K_N$ be the complete graph on $N\ge 2$ vertices with constant vertex measure $\nu_i\equiv 1$.
For any strictly increasing target list $0<\mu_1^*<\cdots<\mu_{N-1}^*$ there exist
strictly positive symmetric edge weights $w_{ij}=w_{ji}>0$ (for all $\{i,j\}\in E(K_N)$) such that the weighted combinatorial Laplacian $L_{K_N}$ defined by~\eqref{eq:discrete_laplacian} has non-zero spectrum
$(\mu_1^*,\dots,\mu_{N-1}^*)$. In particular, the prescribed eigenvalues are simple.
\end{lemma}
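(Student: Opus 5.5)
The plan is to prove this by a continuity (deformation) argument in the space of weights, with the derivative of the spectrum supplying openness; the earlier results of the paper are not needed. Write $L_w = D_w - W_w$ for the $N\times N$ matrix attached to weights $w=(w_{ij})\in(0,\infty)^{\binom N2}$. Since $\nu\equiv1$ it is symmetric, with row sums zero. Consider the cone
\[
\mathcal C=\{0<\mu_1<\dots<\mu_{N-1}\}\subset\mathbb R^{N-1},
\]
which is open and convex, hence connected. Let $S\subset\mathcal C$ be the set of spectra realised by strictly positive weights. The goal is to show that $S$ is nonempty, open and relatively closed in $\mathcal C$, which gives $S=\mathcal C$.

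\emph{Nonemptiness.} Take the unweighted complete graph, $w_{ij}\equiv1$. Then $L=NI-J$, whose non-zero spectrum is $N$ with multiplicity $N-1$. A small generic perturbation of the weights splits this eigenvalue into $N-1$ simple ones while keeping all weights positive, so $S\neq\emptyset$. (Alternatively, write $L=\sum_{k}\mu_k u_ku_k^{T}$ with $u_0=N^{-1/2}\mathbf 1$ and $\mu_k$ all close to $N$. The off-diagonal entries are then close to $-1$, hence negative, for any orthonormal completion $u_1,\dots,u_{N-1}$ of $u_0$.)

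\emph{Openness.} At a weight $w$ with simple spectrum, let $\varphi_k$ be the normalised eigenvectors. First-order perturbation theory gives
\[
\partial\mu_k/\partial w_{ij}=(\varphi_k(i)-\varphi_k(j))^2.
\]
The map $w\mapsto(\mu_1,\dots,\mu_{N-1})$ is a submersion precisely when the $N-1$ vectors $\bigl((\varphi_k(i)-\varphi_k(j))^2\bigr)_{i<j}$ are linearly independent. I would prove this by duality. Suppose $\sum_k c_k(\varphi_k(i)-\varphi_k(j))^2=0$ for all $i\neq j$. Then the symmetric matrix $Q=\sum_k c_k\varphi_k\varphi_k^{T}$ satisfies $Q_{ii}+Q_{jj}-2Q_{ij}=0$ for all pairs $i,j$, because every pair is an edge of $K_N$. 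Also $Q\mathbf 1=0$, since each $\varphi_k\perp\mathbf 1$. These two facts force $Q=0$, and orthonormality of the $\varphi_k$ then gives $c=0$. This is exactly Colin de Verdi\`ere's ``strong Arnold'' transversality, made trivial here by completeness of the graph. The implicit function theorem now shows $S$ is open.

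\emph{Closedness, the main obstacle.} Let $w^{(m)}$ be positive weights whose spectra converge to some $\mu^*\in\mathcal C$. The trace is bounded, since $\operatorname{tr}L=2\sum w_{ij}=\sum\mu_k$, so after passing to a subsequence $w^{(m)}\to w^\infty\ge0$. The limit graph is connected because $\mu_1^*>0$, and its spectrum is simple. The difficulty is that some limit weights may vanish. I would handle this by repeating the transversality argument at $w^\infty$, but using as free variables only the coordinates $w_{ij}$ that are \emph{strictly positive} in $w^\infty$. If those coordinates already span (for instance if the positive edges contain enough pairs to run the $Q$-argument), then the submersion on the face, combined with small positive increments on the zero edges, yields positive weights with spectrum exactly $\mu^*$. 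If not, I would first argue that one may move a little into the interior, adding $\eta>0$ to every weight and correcting with the face submersion. The case to worry about is a sparse limit graph (e.g.\ a tree) where the face is not transversal. There I would fall back on the explicit spectral-decomposition route. Writing $L=U\operatorname{diag}(0,\mu^*)U^{T}$ with $U$ orthogonal and first column $N^{-1/2}\mathbf 1$, one must choose the remaining columns so that every off-diagonal entry $\sum_k\mu_k^*u_k(i)u_k(j)$ is negative. This I would attempt by an induction on $N$ via interlacing (attach a new vertex to $K_{N-1}$ with small weights). The degenerating-weights step is where I expect the real work, and it is precisely the content of \cite[\S4]{CdV1988}, which one can ultimately cite.
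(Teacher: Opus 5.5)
Your nonemptiness and openness steps are correct. In particular, the $Q$-argument showing that the vectors $\bigl((\varphi_k(i)-\varphi_k(j))^2\bigr)_{i<j}$, $1\le k\le N-1$, are linearly independent is right, and it uses the completeness of $K_N$ exactly where it should. (The paper's proof is only a citation of Colin de Verdi\`ere's theorem.)

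There is, however, a genuine gap: closedness, which is the whole difficulty, is not established, and the mechanisms you sketch fail in general. Solving on the face of positive coordinates of a limit $w^\infty\ge 0$ requires the face Jacobian to be onto. This already fails for $N=3$. Take $w_{12}=w_{23}=a$, $w_{13}=0$, the limit of the positive weights $(a,a,1/m)$. Its non-zero spectrum $(a,3a)$ is simple, with eigenvectors $(1,0,-1)/\sqrt2$ and $(1,-2,1)/\sqrt6$. Both face columns equal $(\tfrac12,\tfrac32)^{T}$, so the face map has rank $1$.

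Adding $\eta$ to every weight and correcting with the full submersion does not rescue this. The increment leaves the eigenvectors unchanged and shifts both eigenvalues by $3\eta$. In the coordinates $(w_{12},w_{23},w_{13})$ the kernel of the full Jacobian is spanned by $(1,-1,0)$. Hence \emph{every} solution of the linearized correction equation has $\delta w_{13}=-\eta$, which exactly cancels the increment. Whether $w_{13}$ stays positive is therefore decided at second order, and nothing in your argument controls that. Concretely, the fibre $\{w_{12}+w_{23}+w_{13}=2a,\ w_{12}w_{23}+w_{23}w_{13}+w_{13}w_{12}=a^2\}$ is a circle tangent to $\{w_{13}=0\}$ at $w^\infty$. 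The interlacing induction is left unspecified. As written, you have only shown that $S$ is a nonempty open subset of $\mathcal C$, and the essential step is deferred to the same citation the paper uses.

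The missing idea is that your fallback route needs no induction: a single explicit orthonormal basis of $\mathbf 1^\perp$ works for every spectrum. Let $h_k:=(k(k+1))^{-1/2}(1,\dots,1,-k,0,\dots,0)$ with $k$ leading ones, $1\le k\le N-1$ (the Helmert basis). Put $m_k:=\mu^*_{N-k}$, so that $m_1>\dots>m_{N-1}>0$, and set $L:=\sum_{k=1}^{N-1}m_kh_kh_k^{T}$. For $i<j$ only the terms with $k\ge j-1$ contribute, with $h_{j-1}(i)h_{j-1}(j)=-1/j$ and $h_k(i)h_k(j)=1/(k(k+1))$ for $k\ge j$. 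Hence for $2\le j\le N-1$
\[
L_{ij}=-\frac{m_{j-1}}{j}+\sum_{k=j}^{N-1}\frac{m_k}{k(k+1)}\le-\frac{m_{j-1}}{j}+m_j\Bigl(\frac1j-\frac1N\Bigr)<0,
\]
while $L_{iN}=-m_{N-1}/N<0$. Set $w_{ij}:=-L_{ij}>0$. Since $L\mathbf 1=0$, $L$ is exactly $L_{K_N}$ with these weights and $\nu\equiv1$, and its spectrum is $(0,\mu_1^*,\dots,\mu_{N-1}^*)$.

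This proves the lemma outright, with no continuity argument and no citation. It in fact works for any positive multiset, since only $m_j\le m_{j-1}$ was used. Your transversality computation then adds the extra information that the set of realizing weights is a smooth manifold of dimension $\binom N2-(N-1)$.
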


\begin{proof}
Colin de Verdi\`ere \cite[\S 4, Theor\`eme~1]{CdV1988} proves that for the complete graph $K_N$ ($N\ge 2$) with any prescribed positive vertex measure, one can realize any strictly increasing list $0<\mu_1^*<\cdots<\mu_{N-1}^*$ as the non-zero spectrum of a weighted combinatorial Laplacian with strictly positive edge weights on every edge of $K_N$. (Note: the result concerns the pure Laplacian class~\eqref{eq:discrete_laplacian}, with zero row sums and positive edge weights; no additional vertex potentials or Schr\"odinger-type diagonal perturbations are needed. The 1987 paper \cite[\S 2a]{CdV1987} invokes this discrete construction explicitly, pointing to \cite[\S 4]{CdV1988} for the proof.) In the constant-measure case $\nu_i\equiv 1$, this directly gives the Laplacian $L_{K_N}$ of~\eqref{eq:discrete_laplacian} with $w_{ij}>0$ for all $\{i,j\}$.
\end{proof}

\begin{remark}[Scaling with constant vertex measure]
\label{rem:constant_measure}
If $\nu_i\equiv \nu$ is a positive constant, then $L_G$ is simply scaled by $\nu^{-1}$ relative to the
case $\nu\equiv 1$. Hence Lemma~\ref{lem:cdv_complete} immediately implies the corresponding statement
for any constant vertex measure.
\end{remark}

\section{The Surface Case (\texorpdfstring{$d=2$}{d=2}): Degenerating Collars}
\label{sec:surfaces}

We recall the standard hyperbolic collar degeneration that produces a discrete graph limit. Although the full inverse spectral construction uses the complete graph $K_N$ (via Lemma~\ref{lem:cdv_complete}), the spectral convergence result itself applies to \emph{any} weighted graph realized by a system of disjoint simple closed geodesics on the surface.

Fix $N\ge 4$. (This covers all target list lengths $n\ge 1$, since the proof of Theorem~\ref{thm:main} takes $N=\max(n+1,4)\ge 4$ and pads the discrete spectrum with strictly increasing extra eigenvalues above the target window when $N>n+1$.)
Let $\Sigma$ be a closed orientable surface of genus $\gamma=1+\frac{N(N-3)}{2}$. Choose a collection $\mathcal{A}=\{\gamma_e\}_{e\in E(K_N)}$ of $\binom{N}{2}$ pairwise disjoint simple closed curves on $\Sigma$ whose dual graph is $K_N$: cutting $\Sigma$ along $\mathcal{A}$ produces $N$ connected components $X_1,\dots,X_N$, each of genus $0$ with $N-1$ boundary curves. (Such a curve system exists: begin with $N$ disjoint spheres with $N-1$ holes and connect them by $\binom{N}{2}$ tubes.) Fix a complete hyperbolic metric $g_0$ on $\Sigma$ in which all curves $\gamma_e$ are geodesic, and equip $\Sigma$ with the Fenchel-Nielsen coordinates associated to a pants decomposition that includes $\mathcal{A}$.

For each $\delta>0$, let $\Sigma_\delta$ be the hyperbolic surface obtained by setting
\begin{equation}
\label{eq:core_length}
    \ell(\gamma_e)=\pi\delta  w_e
\end{equation}
for prescribed edge weights $w_e>0$, while \emph{holding all remaining Fenchel-Nielsen coordinates fixed} (both the lengths of curves in the pants decomposition not belonging to $\mathcal{A}$ and all twist parameters). By Gauss-Bonnet, the vertex pieces $X_{v,\delta}$ (whose boundary lengths vary with $\delta$) satisfy $\operatorname{Area}(X_{v,\delta})=2\pi(N-3)$ independently of $\delta$. The curves in $\mathcal{A}$ are disjoint simple closed geodesics on $\Sigma_\delta$ for every $\delta>0$, and the Collar Theorem \cite[Theorems~4.1.1 and~4.1.6]{Buser1992} provides embedded disjoint collar neighborhoods around each.

The key analytic input is that the collar conductance is asymptotically $\ell_e/\pi$ as $\ell_e\to 0$; see \cite[\S 8.1]{Buser1992}.

\begin{theorem}[Burger {\cite{Burger1988,Burger1990}}; Buser {\cite[\S 8]{Buser1992}}]
\label{thm:burger}
Let $\Sigma$ be a closed orientable hyperbolic surface, $\mathcal{A}=\{\gamma_e\}_{e\in E(G)}$ a collection of pairwise disjoint simple closed geodesics on $\Sigma$ whose dual graph is a connected graph~$G$ on $N$ vertices, and $V_v:=\operatorname{Area}(X_v)$ the area of the $v$th vertex piece. Let $\Sigma_\delta$ be the family obtained by setting $\ell(\gamma_e)=\pi\delta  w_e$ for prescribed edge weights $w_e>0$, while holding all remaining Fenchel-Nielsen coordinates fixed, and let $\mu_k$ be the eigenvalues of the discrete Laplacian $L_{G}$ with edge weights $w_e$ and vertex measures $V_v$.
Then as $\delta\to 0$,
\[
\lambda_0(\Sigma_\delta)=0,\qquad\lambda_k(\Sigma_\delta)=\delta \mu_k (1+o(1))\qquad \text{for }k=1,\dots,N-1,
\]
and there exists a constant $C_0>0$, independent of $\delta$, such that
\[
\lambda_k(\Sigma_\delta)\ge C_0\qquad \text{for all }k\ge N.
\]
\end{theorem}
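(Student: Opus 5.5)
The plan is a two-sided min-max comparison between the Rayleigh quotient on $\Sigma_\delta$ and the discrete Dirichlet form $\sum_e w_e (f(i)-f(j))^2$ on $\ell^2(\mathcal V, V_v)$. The key geometric facts I will use are as follows. First, by the Collar Theorem each $\gamma_e$ has an embedded collar $C_e$ of width $w(\ell_e)=\operatorname{arcsinh}(1/\sinh(\ell_e/2))\sim \log(1/\ell_e)$. Second, the thick parts $X_v\setminus\bigcup C_e$ have uniformly bounded geometry as $\delta\to0$, because all other Fenchel--Nielsen coordinates are fixed. Third, the areas satisfy $\operatorname{Area}(X_{v,\delta})\to V_v$; in fact they are independent of $\delta$ by Gauss--Bonnet. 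In Fermi coordinates $(\rho,t)$ on a collar the metric is $d\rho^2+\ell_e^2\cosh^2\!\rho\, dt^2$ with $t\in\mathbb R/\mathbb Z$. For rotationally symmetric functions, the capacity between the two boundary circles is therefore $\bigl(\int_{-w}^{w} d\rho/(\ell_e\cosh\rho)\bigr)^{-1}=\ell_e/(\pi-o(1))$. This is the conductance $\ell_e/\pi=\delta w_e$ quoted above.

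\emph{Upper bound.} Given $f\in\mathbb R^{\mathcal V}$, I define the test function $u_f$ to equal $f(v)$ on the thick part of $X_v$. Across each collar, $u_f$ interpolates by the one-dimensional capacitary profile $\phi(\rho)\propto\int_{-w}^{\rho}d s/\cosh s$. Then $\int|\nabla u_f|^2=\sum_e (\delta w_e+o(\delta))(f(i)-f(j))^2$. Since the collars have area $2\ell_e\sinh w\to 2$ but $u_f$ is bounded there, I get $\|u_f\|^2=\sum_v V_v f(v)^2+O(1)\cdot$ (collar area contribution). The collar area contribution needs care: it does not vanish, so I will instead assign each half-collar to its adjacent vertex piece, so that $\|u_f\|^2=\sum_v V_v f(v)^2(1+o(1))$ up to the profile deviation. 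That deviation is confined to a region of area $o(1)$ near the core, because $\phi$ is essentially constant except where $\cosh\rho=O(1)$. Min-max over the $k+1$-dimensional span then gives $\lambda_k(\Sigma_\delta)\le\delta\mu_k(1+o(1))$.

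\emph{Lower bound.} This is the main obstacle. I will take an eigenfunction $u$ with $\lambda\le C\delta$ and show it is nearly constant on each $X_v$. There is a uniform Neumann Poincaré inequality on $X_v$ truncated at the collar boundaries; it holds because of bounded geometry, and it extends through half-collars after separating the zero Fourier mode in $t$. Non-zero modes on the collar have Rayleigh quotient $\ge (2\pi)^2/(\ell_e^2\cosh^2\rho)\ge c>0$, which is Buser's estimate that the "small eigenvalue" part only sees the zero mode. With these, $u=\sum_v \bar u_v\mathbf 1_{X_v}+r$ with $\|r\|^2\le C\lambda^{-1}$-weighted energy, i.e.\ $\|r\|^2=O(\delta)$ relative to $\|u\|^2$. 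The energy on collar $e$ is at least the capacity times the squared difference of boundary averages, i.e.\ $\ge\delta w_e(1-o(1))(\bar u_i-\bar u_j)^2-o(\delta)\|u\|^2$. Here I use a trace estimate to compare boundary averages with $\bar u_v$; the error $O(\|r\|\,\|\nabla u\|)$ is $o(\delta)$. Applying this to the span of the first $k+1$ eigenfunctions and passing to the discrete min-max yields $\lambda_k\ge\delta\mu_k(1-o(1))$.

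\emph{Gap above $N-1$.} For $k\ge N$, suppose $\lambda_k(\Sigma_\delta)\to0$ along a sequence. The same decomposition then shows that the first $k+1\ge N+1$ eigenfunctions map almost injectively into $\mathbb R^{\mathcal V}$ via $u\mapsto(\bar u_v)$, since $r$ is small. This contradicts $\dim\mathbb R^{\mathcal V}=N$. The argument gives a uniform $C_0>0$ (take $C_0$ below the constants of the Poincaré and non-zero-mode bounds), and $\lambda_0=0$ is trivial.
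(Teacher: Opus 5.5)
Your proposal is correct and follows the route the paper relies on: the paper defers to Burger and Buser and names exactly your two inputs (collar conductance $\ell_e/\pi=\delta w_e$ and a $\delta$-uniform Neumann Poincar\'e inequality on the vertex pieces $X_{v,\delta}$), and your prolongation upper bound, averaging lower bound and dimension count for $k\ge N$ are the $d=2$ counterparts of steps (D), (E) and Proposition~\ref{prop:parasitic} in Section~\ref{sec:higher}. The one step you should write out is the zero Fourier mode on a half-collar: Cauchy--Schwarz from the junction circle only gives a Poincar\'e constant of order $w_e\sim\log(1/\delta)$, which suffices for the asymptotics but yields only $\lambda_N\gtrsim 1/\log(1/\delta)$, whereas the weighted Hardy bound $\int_0^{w}|u_0-u_0(w)|^2\cosh\rho\,d\rho\le 4\int_0^{w}|u_0'|^2\cosh\rho\,d\rho$ (the $\tfrac14$ spectral bottom of the hyperbolic cylinder) gives a uniform $C_0$; also, your cross term is $O(\delta^{3/2})\|u\|^2$ because of the capacity prefactor, not $O(\|r\|\,\|\nabla u\|)$, which as written is only $O(\delta)\|u\|^2$.
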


\begin{remark}
The key hypothesis underlying both the eigenvalue asymptotics and the uniform gap $C_0$ is the non-degeneration of the complementary pieces: because the Fenchel-Nielsen lengths outside $\mathcal{A}$ are held fixed, no other geodesic on $\Sigma_\delta$ becomes short, and the Neumann spectral gap of each vertex piece $X_{v,\delta}$ (after removing controlled collar neighborhoods at the pinching curves) remains uniformly positive; see \cite[\S\S 4--5]{Burger1990} for the quantitative bounds. The eigenvalue asymptotics $\lambda_k\sim\delta\mu_k$ are established in \cite[Theorems~1.1--1.2]{Burger1990}; the uniform lower bound $C_0$ for $k\ge N$ follows from the uniform Neumann gap of the vertex pieces \cite[\S\S 4--5]{Burger1990} together with the collar energy estimates in \cite[\S 8.1]{Buser1992}. The $K_N$ construction with equal vertex areas $V_v=2\pi(N-3)$ described above is the primary instance used in Section~\ref{sec:proof}; the worked example in Section~\ref{sec:example} uses $G=P_3$ with unequal vertex areas.
\end{remark}

\begin{remark}
\label{rem:general_graph}
The proof of Theorem~\ref{thm:burger} uses only two properties of the pinching family: that each collar conductance is asymptotic to $\ell_e/\pi$ as $\ell_e\to 0$ \cite[\S 8]{Buser1992}, and that the Neumann spectral gap of each vertex piece remains uniformly positive \cite[\S\S 4--5]{Burger1990}; the latter is ensured by the non-degeneration condition (all non-pinched Fenchel-Nielsen coordinates remain bounded). The case $G=K_N$ is used in Section~\ref{sec:proof} for the full inverse spectral theorem (where Lemma~\ref{lem:cdv_complete} requires a complete graph); the worked example in Section~\ref{sec:example} uses $G=P_3$ for its algebraic tractability.
\end{remark}
\section{Higher Dimensions (\texorpdfstring{$d\ge 3$}{d>=3}): Heavy-Vertex Coverings and Discrete Limits}
\label{sec:higher}

In dimensions $d\ge 3$, we construct hyperbolic covering manifolds whose macroscopic low-energy dynamics converge to a prescribed \emph{discrete} graph Laplacian via a ``heavy vertex / long corridor'' regime.

\subsection{A base manifold surjecting onto a free group}

By a theorem of Millson \cite{Millson1976}, for each $d\ge 4$ there exist closed arithmetic hyperbolic $d$-manifolds $M_0$ with $b_1(M_0)>0$. Millson's construction produces arithmetic lattices of simplest type in $\mathrm{SO}_0(d,1)$, defined by admissible quadratic forms over totally real number fields, with non-vanishing first cohomology.
By Lubotzky \cite{Lubotzky1996}, arithmetic lattices of simplest type in $\mathrm{SO}(d,1)$ are \emph{large} for all $d\ge 3$: some finite-index subgroup surjects onto the free group~$F_2$. (Lubotzky's proof uses the totally geodesic codimension-$1$ submanifolds furnished by the arithmetic structure; the conclusion $b_1>0$ is a consequence of largeness, not a hypothesis. In particular, for $d=3$ the largeness theorem applies directly to cocompact arithmetic Kleinian groups of simplest type, which always contain totally geodesic surfaces by their arithmetic construction, even though Millson's original construction is stated for $d\ge 4$.)

Fix a target length $n$ and choose an odd integer $N\ge \max(n+1,5)$; set $D=(N-1)/2$.
By Schreier theory (see, e.g., \cite[Ch.~I, Prop.~3.9]{LyndonSchupp}), $F_2$ contains $F_D$ as a finite-index subgroup, hence after passing to a finite cover we obtain:

\begin{proposition}
\label{prop:base}
For each $d\ge 3$ and each $D\ge 2$ there exists a closed hyperbolic $d$-manifold $B$ and a surjection $\pi_1(B)\twoheadrightarrow F_D$.
\end{proposition}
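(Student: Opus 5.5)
The plan is to combine the two cited inputs with Schreier's index formula for free groups. First, I obtain a closed hyperbolic $d$-manifold $M_1$ whose fundamental group surjects onto $F_2$. For $d\ge4$, take Millson's closed arithmetic hyperbolic manifold $M_0$ of simplest type. For $d=3$, take any closed arithmetic hyperbolic $3$-manifold of simplest type, i.e. one defined by an admissible quadratic form over a totally real field. Lubotzky's largeness theorem gives a finite-index subgroup $\Gamma_1\le\pi_1(M_0)$ and a surjection $\phi\colon\Gamma_1\twoheadrightarrow F_2$. Passing to the torsion-free finite-index subgroups guaranteed by Selberg's lemma, I may assume $\Gamma_1$ is torsion-free and cocompact. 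Then $M_1:=\mathbb{H}^d/\Gamma_1$ is a closed hyperbolic $d$-manifold with $\pi_1(M_1)=\Gamma_1\twoheadrightarrow F_2$. (Restricting $\phi$ to a finite-index subgroup $\Gamma_1'$ gives a finite-index image in $F_2$, which is again free of rank $\ge2$ by the next step, so nothing is lost.)

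Next, I realize $F_D$ inside $F_2$. By Nielsen–Schreier together with the Schreier index formula \cite[Ch.~I, Prop.~3.9]{LyndonSchupp}, a subgroup $H\le F_2$ of index $k$ is free of rank $1+k(2-1)=k+1$. So I set $k=D-1\ge1$ and take $H$ to be any index-$(D-1)$ subgroup. Such an $H$ exists, for example as the preimage of a point stabilizer under a transitive action of $F_2$ on $\{1,\dots,D-1\}$, e.g. via $F_2\to\mathbb{Z}\to\mathbb{Z}/(D-1)$. Then $H\cong F_D$.

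Then I pull back. Let $\Gamma:=\phi^{-1}(H)\le\Gamma_1$. Since $\phi$ is surjective, $[\Gamma_1:\Gamma]=[F_2:H]=D-1$ is finite, and $\phi|_\Gamma\colon\Gamma\to H$ is surjective: any $h\in H$ has a preimage in $\Gamma_1$, which lies in $\Gamma$ by definition. The manifold $B:=\mathbb{H}^d/\Gamma$ is a $(D-1)$-sheeted cover of $M_1$. It is therefore closed, since it is a finite cover of a compact space, and hyperbolic, since $\Gamma$ is torsion-free and acts cocompactly. Finally, $\pi_1(B)=\Gamma\twoheadrightarrow H\cong F_D$.

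The only real obstacle is the input step in dimension $3$. There I must be sure that the cited largeness result applies to a cocompact simplest-type arithmetic group, which needs totally geodesic surfaces, and that a torsion-free cocompact finite-index subgroup can be chosen compatibly. Both points follow from Lubotzky's theorem together with Selberg's lemma. The rest is a formal pullback argument.
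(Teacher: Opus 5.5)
Your proposal is correct and follows the paper's route. Both arguments use Millson--Lubotzky to get a finite-index subgroup surjecting onto $F_2$, then the Schreier index formula to realize $F_D$ as a finite-index (here index $D-1$) subgroup of $F_2$, and then pull back to a finite cover $B$. Your explicit construction of the index-$(D-1)$ subgroup via $F_2\to\mathbb{Z}\to\mathbb{Z}/(D-1)$ fills in a step the paper leaves implicit. Selberg's lemma is harmless but unnecessary, since you start from a closed manifold and finite-index subgroups of its torsion-free $\pi_1$ are automatically torsion-free.
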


\begin{remark}[Non-constructiveness]
\label{rem:nonconstructive}
The base manifold $B$ is obtained existentially via Millson-Lubotzky. The construction is non-constructive at its foundation: not only is $B$ existential, but the fundamental block $F$ (obtained by cutting $B$ along smooth hypersurfaces), the analytic constants $C_P$, $C_i$, $C_\chi$, and the expander wiring (via the probabilistic method) are all non-explicit. In particular, the effective constants in the $O(\cdot)$ error estimates throughout Section~4 depend on the geometry of~$F$, which is in turn determined by $B$ and the cut system. For $d=3$, explicit \emph{closed} arithmetic hyperbolic $3$-manifolds are available from cocompact arithmetic Kleinian groups of simplest type (e.g., unit groups of orders in quaternion algebras over number fields with exactly one complex place, ramified at all real places; see \cite[\S\S 8.1--8.2, 9.5]{MaclachlanReid2003} for the general framework). Such groups contain totally geodesic surfaces by their arithmetic construction, so Lubotzky's largeness theorem applies; however, making the full Section~4 construction effective would additionally require an explicit cut system and explicit Schreier graphs, which we do not pursue. (We caution that the Bianchi groups $\mathrm{PSL}_2(\mathcal{O}_d)$, often cited as canonical examples of arithmetic hyperbolic $3$-manifold groups, are \emph{non-cocompact}: they yield cusped manifolds, not the closed base manifold~$B$ required here.) For $d\ge 4$, no explicit base manifold $B$ satisfying our requirements (closed, hyperbolic, with $\pi_1(B)\twoheadrightarrow F_D$) is known to the author.
\end{remark}

Let $X_D$ denote the bouquet of $D$ circles with basepoint $v_0$, which is a $K(F_D,1)$ space ($\pi_1(X_D)\cong F_D$ and $\pi_k(X_D)=0$ for $k\ge 2$). Write $e_1,\dots,e_D$ for the open petals.

\begin{proposition}[Connected cut system]
\label{prop:cut_system}
For $d\ge 3$, there exist pairwise disjoint connected smooth closed two-sided hypersurfaces
\[
H_1,\dots,H_D\subset B
\]
such that the complement $B\setminus(H_1\cup\cdots\cup H_D)$ is connected.
\end{proposition}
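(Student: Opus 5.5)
We realize the surjection topologically. By Proposition~\ref{prop:base} we have $\phi:\pi_1(B)\twoheadrightarrow F_D$. Since $X_D$ is a $K(F_D,1)$, there is a continuous map $f:B\to X_D$ inducing $\phi$ on $\pi_1$, unique up to homotopy. We first homotope $f$ to be smooth away from the basepoint. Concretely, we choose a point $p_j$ in the interior of each petal $e_j$ with a small arc neighbourhood $(p_j-\eta,p_j+\eta)\cong(-\eta,\eta)$, and make $f$ smooth on $f^{-1}$ of these arcs and transverse to each $p_j$. Then each $H_j':=f^{-1}(p_j)$ is a smooth closed hypersurface. It is two-sided, because its normal bundle is pulled back from the (trivial) normal bundle of $p_j$ in the petal, so the coordinate $t\in(-\eta,\eta)$ gives a global normal direction. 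The $H_j'$ are pairwise disjoint since the $p_j$ are distinct. Moreover, $f$ maps $B\setminus\bigcup H_j'$ into $X_D\setminus\{p_1,\dots,p_D\}$, which is contractible; this is the standard Stallings/"Thom--Pontryagin for free groups" picture.

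The real content is to improve $H_j'$ so that each $H_j$ is connected and the complement $B\setminus\bigcup_j H_j$ is connected; this is the step I expect to be the main obstacle. I would use Stallings' argument, in the form used for Kneser's conjecture and in the "surjection to free group implies cut system" lemma (cf.\ Jaco, Hempel). Whenever the complement has two components meeting along two different pieces of $\bigcup H'_j$, or when a single $H'_j$ is disconnected, we find an embedded arc or compressing move that reduces complexity. Since $\phi$ is surjective, we use the following procedure.

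\textbf{(a) Discard null components.} Any component $S$ of $H_j'$ which does not separate $B$ in a way detected by $\phi$ can be removed by redefining $f$ on a collar. If $S$ bounds on one side a region mapping into the complement, we can push $f$ off $p_j$ there. More generally, consider the dual graph $\Gamma$: its vertices are the components of $B\setminus\bigcup H_j'$, and its edges are the components of the $H_j'$, labelled by $j$ with orientation. The map $f$ factors up to homotopy through $B\to\Gamma\to X_D$. If some vertex of $\Gamma$ has two incident edges with the same label, both pointing the same way, we can tube them together along an arc in that complementary region (an ambient $0$-surgery in dimension $d\ge3$). This is a Stallings fold, and it is realized by modifying $f$ in a neighbourhood of the arc. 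Each fold decreases the number of edges of $\Gamma$, and $\pi_1(B)\to\pi_1(\Gamma)\to F_D$ stays surjective.

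\textbf{(b) Terminate the folding.} When no folds remain, $\Gamma\to X_D$ is an immersion of graphs inducing a surjection on $\pi_1$. An immersion is $\pi_1$-injective, so $\pi_1(\Gamma)\cong F_D$ and $\Gamma\to X_D$ is a covering of degree $1$, i.e.\ an isomorphism. We must also remove valence-one edges: a hypersurface that separates $B$ contributes a bridge, which we delete by homotoping $f$. Throughout, we track the core of $\Gamma$ containing the image of $\pi_1(B)$, and $\pi_1(B)\to\pi_1(\Gamma)$ is surjective since $B\to\Gamma$ has connected fibres after the moves.

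Once $\Gamma\cong X_D$, it has exactly one vertex, so the complement is connected, and it has exactly one edge per label, so each $H_j$ is connected. Here $d\ge3$ is used so that the tubing in step (a) keeps hypersurfaces embedded and disjoint. The delicate points are to check that the tubing can be done smoothly and keeps two-sidedness (tube along an arc, matching the normal orientations given by the folding condition), and to find a complexity, such as the number of edges of $\Gamma$, that strictly decreases at each move.
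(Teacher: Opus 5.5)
Your proposal is correct in outline, but it takes a genuinely different route from the paper.

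\textbf{Your route.} You keep every component of the transverse fibres. You merge components by ambient tubing (Stallings folds on the dual graph $\Gamma$) and prune leaves, until $\Gamma\cong X_D$.

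\textbf{The paper's route.} The paper never modifies anything. It forms the same dual graph $\Gamma$ of all fibre components. It shows $H_1(\Gamma;\mathbb{Z})\to H_1(X_D;\mathbb{Z})$ is onto, by computing intersection numbers of loops representing the generators with the fibres. It then expands $v_1\wedge\cdots\wedge v_D$, where $v_j$ is the colour-$j$ counting functional on the cycle space $Z_1$. This yields one edge $a_j$ of each colour with $a_1^*|_{Z_1},\dots,a_D^*|_{Z_1}$ linearly independent. A disconnecting edge set would support a nonzero coboundary, whose restriction to $Z_1$ vanishes; so independence forces $\Gamma\setminus\{a_1,\dots,a_D\}$ to be connected. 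The chosen components are the $H_j$, and every other component is simply discarded.

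\textbf{What each buys.} The paper's argument is economical: no surgery, no smoothing of tubes, no homotopy bookkeeping, no Stallings immersion lemma, only finite linear algebra. Yours gives a stronger conclusion. Each fold and each pruning is realized by a homotopy of $f$, so the final map with $f^{-1}(p_j)=H_j$ still induces the given $\phi$ (up to conjugacy). In other words, your cut system is dual to the prescribed surjection. The paper only rebuilds \emph{some} surjection afterwards (Lemma~\ref{lem:bouquet_map}), which is all it needs.

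\textbf{Points to make explicit if you write yours up.}
\begin{enumerate}
\item A fold is a homotopy because, once the two edges are identified, the arc's image in $\Gamma$ becomes a backtracking path. This is exactly where ``same label, same direction'' is used.
\item Pruning a leaf uses that the closed region behind the separating component maps into the cut-open star $X_D\setminus\{p_1,\dots,p_D\}$ (a tree), with its boundary sent to a single prong end. Hence $f$ is homotopic, rel the far side of the collar, to a map missing $p_j$.
\item The terminal step really does need the pruning, as you note. An immersion with hanging trees can be a $\pi_1$-isomorphism without being a covering.
\end{enumerate}

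Finally, tubing along an arc works equally well for $d=2$ (it is a band sum), so $d\ge 3$ is not what makes that step work.
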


\noindent\textbf{Convention.}
We write $F^\circ:=B\setminus(H_1\cup\cdots\cup H_D)$ for the open complement, and $F$ for the compact manifold with boundary obtained by cutting $B$ open along the $H_i$. Thus $\operatorname{int}(F)=F^\circ$ and $\partial F=H_1^+\cup H_1^-\cup\cdots\cup H_D^+\cup H_D^-$, where $H_i^\pm$ are the two copies of $H_i$ arising from the two sides of the cut. Unless otherwise stated, $F$ henceforth denotes the compact block.
Crucially, because the hypersurfaces $H_1,\dots,H_D$ are pairwise disjoint, $\partial F$ is a disjoint union of smooth closed hypersurfaces and $F$ has no corners or codimension-$2$ boundary singularities. Standard Sobolev trace theorems and elliptic regularity estimates therefore apply uniformly on~$F$; the analytic constants $C_{\mathrm{tr}}$, $C_{\mathrm{bl}}$, and $C_P$ appearing in the sequel depend only on the fixed Riemannian geometry of~$F$.

\begin{proof}
\emph{Step 1: Auxiliary map and its fibers.}
Since $X_D$ is a $K(F_D,1)$, the surjection $\pi_1(B)\twoheadrightarrow F_D$ from Proposition~\ref{prop:base} is realized by a continuous map $g:B\to X_D$ inducing the given homomorphism on $\pi_1$. For each petal $e_j$, choose an open subarc $U_j\subset e_j$. By smooth approximation, we may homotope $g$ so that $g$ is smooth on $g^{-1}(U_j)$ for each~$j$; Sard's theorem then provides regular values $q_j\in U_j$ at which $g$ is transverse. Since $g_*$ is surjective, $g$ maps some loop in $B$ to a loop of degree $1$ on the $j$th circle; such a map is surjective onto~$e_j$, so in particular $q_j\in g(B)$ and each fiber $g^{-1}(q_j)$ is nonempty. Each fiber $g^{-1}(q_j)$ is a smooth closed two-sided codimension-$1$ submanifold of $B$, possibly disconnected. We co-orient each component $\Sigma$ of $g^{-1}(q_j)$ by pulling back the positive orientation of~$e_j$; the bicollar $N_\Sigma\cong\Sigma\times[-1,1]$ is chosen compatible with this co-orientation. Let
\[
\mathcal{S}:=\bigl\{\text{connected components of }g^{-1}(q_j):j=1,\dots,D\bigr\}.
\]
The elements of $\mathcal{S}$ are finitely many pairwise disjoint connected smooth closed two-sided hypersurfaces. Each $\Sigma\in\mathcal{S}$ inherits a \emph{color} $\mathrm{col}(\Sigma):=j$ from its parent fiber.

\emph{Step 2: Full dual graph.}
Cut $B$ along every $\Sigma\in\mathcal{S}$ and form the dual graph $\Gamma$: vertices of~$\Gamma$ are the connected components of
$B\setminus\bigcup_{\Sigma\in\mathcal{S}}\Sigma$,
and each $\Sigma\in\mathcal{S}$ contributes one edge $e_\Sigma$ joining the two adjacent complementary regions (a loop if both sides lie in the same region). The graph $\Gamma$ is finite and connected (since $B$ is).

\emph{Step 3: Surjectivity of $\eta_*$ on homology.}
Define the graph map $\eta:\Gamma\to X_D$ by sending each vertex of $\Gamma$ to $v_0$ and each edge $e_\Sigma$ (with $\mathrm{col}(\Sigma)=j$) around the $j$th loop of~$X_D$. Let $Q:B\to\Gamma$ be the quotient map associated to the cut decomposition: it collapses each complementary region to the corresponding vertex and maps each bicollar of a component $\Sigma\in\mathcal{S}$ to the corresponding dual edge~$e_\Sigma$.

We claim that $\eta_*:H_1(\Gamma;\mathbb{Z})\to H_1(X_D;\mathbb{Z})\cong\mathbb{Z}^D$ is surjective.

For each $j\in\{1,\dots,D\}$, surjectivity of $g_*:\pi_1(B)\twoheadrightarrow F_D$ provides a smooth loop $\gamma_j$ in $B$ with $[g\circ\gamma_j]=a_j$ (the $j$th free generator) in $F_D$. We may assume $\gamma_j$ is transverse to every $\Sigma\in\mathcal{S}$. Since $[g\circ\gamma_j]=a_j$, the algebraic intersection number of $\gamma_j$ with $g^{-1}(q_k)$ equals the exponent-sum of $a_k$ in $a_j$, which is $\delta_{jk}$.

Since $\gamma_j$ is a closed loop, the quotient $Q(\gamma_j)$ is a cycle $c_j$ in $\Gamma$. For each color $k$, the algebraic count of color-$k$ edges traversed by $c_j$ equals the algebraic intersection of $\gamma_j$ with $\bigcup\{\Sigma\in\mathcal{S}:\mathrm{col}(\Sigma)=k\}=g^{-1}(q_k)$, which is $\delta_{jk}$. Under $\eta$, each color-$k$ edge maps to a loop around the $k$th petal; therefore the $k$th coordinate of $[\eta_*(c_j)]$ in $H_1(X_D;\mathbb{Z})\cong \mathbb{Z}^D$ is $\delta_{jk}$. Since $\{\eta_*(c_j):j=1,\dots,D\}$ are the standard basis vectors of $\mathbb{Z}^D$, the map $\eta_*:H_1(\Gamma;\mathbb{Z})\to H_1(X_D;\mathbb{Z})$ is surjective.

\emph{Step 4: Selecting a colored transversal via the cycle space.}
For each edge~$e$ of color~$j$, orient~$e$ so that $\eta$ traverses the $j$th petal of~$X_D$ positively on~$e$. For an oriented edge~$e$, let $e^*\in C^1(\Gamma;\mathbb{R})$ denote the dual edge cochain.
Write $E_j\subset E(\Gamma)$ for the set of edges of color~$j$, and let $Z_1:=H_1(\Gamma;\mathbb{R})$ denote the cycle space.
Since $\eta_*:H_1(\Gamma;\mathbb{Z})\to H_1(X_D;\mathbb{Z})\cong\mathbb{Z}^D$ is surjective (Step~3), the induced map on real homology $\eta_*:Z_1\to H_1(X_D;\mathbb{R})\cong\mathbb{R}^D$ is also surjective. For each color~$j$, the \emph{color-$j$ evaluation} $v_j\in Z_1^*=H^1(\Gamma;\mathbb{R})$ is the linear functional that counts (with sign) the algebraic number of color-$j$ edges traversed by a cycle. Since $\eta_*$ is surjective on $Z_1$, the functionals $v_1,\dots,v_D$ are linearly independent on~$Z_1$.

Each $v_j$ decomposes as $v_j=\sum_{e\in E_j} e^*\big|_{Z_1}$. Consider the exterior product in $\Lambda^D(Z_1^*)$:
\[
v_1\wedge\cdots\wedge v_D
=
\sum_{(a_1,\dots,a_D)\in E_1\times\cdots\times E_D}
a_1^*\big|_{Z_1}\wedge\cdots\wedge a_D^*\big|_{Z_1}.
\]
Since $v_1,\dots,v_D$ are linearly independent, this wedge product is nonzero, so at least one summand is nonzero. Choose a $D$-tuple $(a_1,\dots,a_D)$ with $a_i\in E_i$ for which $a_1^*|_{Z_1},\dots,a_D^*|_{Z_1}$ are linearly independent. Let $H_i\in\mathcal{S}$ be the hypersurface component corresponding to~$a_i$.

\emph{Step 5: Connected complement.}
We claim that $\Gamma\setminus\{a_1,\dots,a_D\}$ is connected. Suppose not: then there is a nontrivial vertex partition $(S,S^c)$ with no edges of $\Gamma\setminus\{a_1,\dots,a_D\}$ crossing between $S$ and $S^c$. The coboundary $\delta(\mathbf{1}_S)\in C^1(\Gamma;\mathbb{R})$ (assigning $\pm 1$ to edges crossing the partition and $0$ elsewhere) is then supported on $\{a_1,\dots,a_D\}$, giving a nontrivial linear combination $\sum_i \alpha_i a_i^*=\delta(\mathbf{1}_S)$ with $\alpha_i\in\{-1,0,1\}$. Since $\delta(\mathbf{1}_S)$ lies in the cut space $B^1(\Gamma;\mathbb{R})=Z_1^\perp$, we obtain $\sum_i \alpha_i a_i^*\big|_{Z_1}=0$, contradicting the linear independence of $a_1^*|_{Z_1},\dots,a_D^*|_{Z_1}$.

Hence $\Gamma\setminus\{a_1,\dots,a_D\}$ is connected. Write $\Gamma_{\mathrm{cut}}$ for the dual graph of the decomposition of $B$ by only $H_1,\dots,H_D$; it is obtained from~$\Gamma$ by contracting every non-loop edge of $\Gamma\setminus\{a_1,\dots,a_D\}$ and deleting every loop of $\Gamma\setminus\{a_1,\dots,a_D\}$. Since $\Gamma\setminus\{a_1,\dots,a_D\}$ is connected, all vertices merge into a single point, and the $D$ chosen edges $a_1,\dots,a_D$ survive as loops at that point. Thus $\Gamma_{\mathrm{cut}}$ is a bouquet of $D$~circles; in particular, it has exactly one vertex, so $F^\circ=B\setminus(H_1\cup\cdots\cup H_D)$ is connected.
\end{proof}

\begin{lemma}[Bouquet map from the cut system]
\label{lem:bouquet_map}
For the hypersurfaces $H_1,\dots,H_D$ from Proposition~\ref{prop:cut_system}, there exists a continuous map $f:B\to X_D$, smooth on $f^{-1}(e_j)$ for each $j$, and interior points $p_i\in e_i$, such that:
\begin{enumerate}
\item[\emph{(i)}] $f^{-1}(p_i)=H_i$ for each $i=1,\dots,D$ (so each selected fiber is connected by construction);
\item[\emph{(ii)}] $f_*:\pi_1(B)\twoheadrightarrow F_D$ is surjective;
\item[\emph{(iii)}] $f(F^\circ)\subset X_D\setminus\{p_1,\dots,p_D\}$.
\end{enumerate}
\end{lemma}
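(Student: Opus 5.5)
The natural construction is the Pontryagin--Thom collapse along the cut system. Choose pairwise disjoint closed bicollars $N_i\cong H_i\times[-1,1]$ of the hypersurfaces $H_i$ from Proposition~\ref{prop:cut_system}, with $H_i=H_i\times\{0\}$ and co-orientation as in Step~1 of that proof. Parametrize each petal of $X_D$ as $e_i\cup\{v_0\}\cong[-1,1]/\{\pm1\}$, with $p_i$ the image of $0$. Then define
\[
f(x)=\begin{cases} [t]\in e_i\cup\{v_0\}, & x=(y,t)\in N_i,\\ v_0, & x\notin \bigcup_i N_i .\end{cases}
\]
This map is continuous, since $\partial N_i$ goes to $v_0$. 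It is smooth on $f^{-1}(e_j)=H_j\times(-1,1)$, because there it is just the projection to the $t$-coordinate. We have $f^{-1}(p_i)=H_i\times\{0\}=H_i$, which is connected by Proposition~\ref{prop:cut_system}, so (i) holds. Also $F^\circ=B\setminus\bigcup H_i$ misses every $H_i$, so $f(F^\circ)$ avoids all $p_i$, which gives (iii).

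The substance is (ii). Observe that $f$ factors as $B\xrightarrow{Q'}\Gamma_{\mathrm{cut}}\xrightarrow{\cong}X_D$. Here $Q'$ is the quotient map to the dual graph of the decomposition by $H_1,\dots,H_D$ alone. Step~5 of the proof of Proposition~\ref{prop:cut_system} shows this dual graph is a bouquet of $D$ circles, with the edge dual to $H_i$ identified with petal $e_i$.

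To prove surjectivity on $\pi_1$, I will construct explicit lifts of the generators. Connectedness of $F^\circ$ is the key input. Fix a basepoint $b\in F^\circ$ outside all collars, so $f(b)=v_0$. For each $i$, take a short arc crossing $N_i$ once, positively, along a fiber $\{y\}\times[-1,1]$. Its endpoints lie in $F^\circ$ on opposite sides of $H_i$. Since $F^\circ$ is connected and open, and the collars minus $H_i$ are connected to the rest of $F^\circ$, I can join $b$ to the start of the arc and the end of the arc back to $b$ by paths $\alpha,\beta$ inside $F^\circ$. I can push these paths off the collars $N_j$: inside $N_j\setminus H_j$ one can retract onto $H_j\times\{\pm1\}$, where $f$ is constant $v_0$. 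Alternatively, I can simply note that any path in $F^\circ$ maps into $X_D\setminus\{p_1,\dots,p_D\}$, which deformation retracts to $v_0$, so $f\circ\alpha$ and $f\circ\beta$ are null-homotopic rel endpoints.

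Let $\gamma_i=\alpha\cdot(\text{fiber arc})\cdot\beta$. Then $f\circ\gamma_i$ is homotopic to the loop traversing $e_i$ exactly once positively, namely the generator $a_i$. Hence every free generator lies in the image, and $f_*$ is surjective.

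The main point needing care is the last argument. The map $X_D\setminus\{p_j\}\to$ point is a homotopy equivalence, but the homotopy must be taken rel endpoints. This is fine, because $X_D\setminus\{p_1,\dots,p_D\}$ is a contractible tree (a star) containing $v_0$, and the endpoints of $f\circ\alpha$ and $f\circ\beta$ lie at $H_i\times\{\pm1\}\mapsto v_0$ or at $b\mapsto v_0$. I would also remark that (ii) does not need the original surjection $g_*$: it follows purely from connectedness of $F^\circ$. So the lemma is really a restatement of Proposition~\ref{prop:cut_system} in map form.
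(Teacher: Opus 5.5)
Your proposal is correct and is essentially the paper's proof. It uses the same collapse map along disjoint bicollars, reads off (i) and (iii) from the fibre structure, and proves (ii) by lifting each free generator to a loop that crosses the $i$th collar once and is closed up by paths in the connected complement. The only cosmetic difference is in those connecting paths: the paper routes them through $F_0=B\setminus\bigcup_i C_i$, so $f$ is literally constant $v_0$ on them, while you route them through $F^\circ$ and use contractibility of the star $X_D\setminus\{p_1,\dots,p_D\}$ to homotope their images rel endpoints; both are valid.
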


\begin{proof}
Since the $H_i$ are smooth, closed, and two-sided, choose pairwise disjoint collar embeddings $c_i:H_i\times[-1,1]\hookrightarrow B$, with $c_i(y,0)=y$. Let $C_i:=c_i(H_i\times(-1,1))$ denote the open collar, and set $F_0:=B\setminus\bigcup_{i=1}^D C_i$. For sufficiently thin collars, $F_0$ is diffeomorphic to the compact block~$F$, hence connected.

For each $i$, choose a continuous map $\theta_i:[-1,1]\to\overline{e_i}$, smooth on $(-1,1)$, such that $\theta_i(-1)=\theta_i(1)=v_0$ and $\theta_i|_{(-1,1)}:(-1,1)\to e_i$ is a diffeomorphism (so that the induced loop traverses the $i$th petal exactly once). Set $p_i:=\theta_i(0)\in e_i$. Define
\[
f(x):=
\begin{cases}
v_0, & x\in F_0,\\[4pt]
\theta_i(t), & x=c_i(y,t)\in C_i.
\end{cases}
\]
This is continuous: on the boundary of each collar, $\theta_i(\pm 1)=v_0$, matching the value on~$F_0$.

\emph{Fiber structure.} Since $\theta_i|_{(-1,1)}$ is a diffeomorphism, $\theta_i^{-1}(p_i)=\{0\}$ and $p_i$ is a regular value, so $f^{-1}(p_i)=c_i(H_i\times\{0\})=H_i$. Moreover, $f(F_0)=\{v_0\}$ and $f(C_i)\subset\overline{e_i}$, so $f^{-1}(p_j)\cap C_i=\varnothing$ for $j\neq i$.

\emph{Surjectivity.} Fix a basepoint $b_0\in F_0$. Since $F_0$ is connected, for each $i$ there exists a path $\alpha_i$ in $F_0$ from $b_0$ to the point $c_i(y_i,-1)$ on one side of the $i$th collar, for some $y_i\in H_i$. Let $\beta_i$ be the path $t\mapsto c_i(y_i,t)$ for $t\in[-1,1]$, which crosses the collar from side to side. Let $\alpha_i'$ be a return path in $F_0$ from $c_i(y_i,1)$ back to $b_0$ (such a path exists because $c_i(y_i,\pm 1)\in F_0$ and $F_0$ is connected). The concatenation $\gamma_i:=\alpha_i\cdot\beta_i\cdot\alpha_i'$ is a loop in $B$ based at $b_0$. Under $f$, the segments $\alpha_i$ and $\alpha_i'$ map to the basepoint $v_0$, while $\beta_i$ maps via $\theta_i$ once around the $i$th petal. Hence $f_*([\gamma_i])$ is the $i$th free generator of $\pi_1(X_D,v_0)\cong F_D$. Since $i$ was arbitrary, $f_*$ is surjective.

\emph{Property (iii)} is immediate: $F^\circ=B\setminus\bigcup_i H_i$, so $f(F^\circ)\subset\{v_0\}\cup\bigcup_i(\overline{e_i}\setminus\{p_i\})\subset X_D\setminus\{p_1,\dots,p_D\}$.
\end{proof}

\begin{remark}[Pullback cover decomposition - the conceptual hinge]
\label{rem:pullback_decomposition}
The connected cut system and the resulting fiber structure are the central requirements for the higher-dimensional construction. By Lemma~\ref{lem:bouquet_map}(iii), $f(F^\circ)\subset X_D\setminus\{p_1,\dots,p_D\}$, which is contractible. Hence $f_*(\pi_1(F^\circ))=\{1\}$ in $F_D$. Let $S:=F_D/\Lambda_m$ denote the set of cosets (so $|S|=|F_D:\Lambda_m|$). The covering $X_m'\to X_D$ corresponding to $\Lambda_m$ restricts to a trivial covering over the contractible set $X_D\setminus\{p_1,\dots,p_D\}$. Since $f(F^\circ)$ lies in this set, the pullback covering $M_m\to B$ restricts to a \emph{trivial} covering over $F^\circ$: the preimage of $F^\circ$ in $M_m$ is a disjoint union of $|S|$ copies of~$F^\circ$, each mapping isometrically to~$F^\circ$. Cutting $M_m$ open along the lifts of the $H_i$, the closures of these components are $|S|$ copies of the compact block~$F$, and $M_m$ is obtained by gluing these copies along their boundary faces $H_i^\pm$ according to the Schreier graph $\Gamma_m$.

The connectedness of each $H_i$ is essential: it guarantees that each color contributes exactly two boundary faces $H_i^\pm$ to the block~$F$, giving the clean $2D$-face structure needed for the corridor and cluster construction in Sections~4.2-4.4.
\end{remark}

\noindent For the remainder of the paper, $\partial F = H_1^+\cup H_1^-\cup\cdots\cup H_D^+\cup H_D^-$ as in the Convention above.

\begin{remark}
For fixed $n$, the choice of $N$, hence $D$, hence the base manifold $B$ and block $F$, are fixed once and for all. Only the scaling parameter $m\to\infty$ will vary in the approximation.
\end{remark}

\subsection{Cell problem and effective conductances}

Fix a color $i\in\{1,\dots,D\}$. Consider the \emph{bi-infinite $i$-periodic chain} $\widetilde{S}_i$ obtained by gluing a $\mathbb{Z}$-indexed sequence of copies of the block $F$ end-to-end along the $i$-faces, and within each copy gluing $H_j^+$ to $H_j^-$ for every $j\neq i$ (so those faces become interior interfaces). The resulting $\widetilde{S}_i$ is a smooth non-compact manifold with an isometric deck transformation $T_i$ shifting by one block in the $i$-direction.

There exists a (unique up to additive constant) \emph{harmonic coordinate} $\chi_i$ on $\widetilde{S}_i$ satisfying the translation rule
\[
\chi_i\circ T_i = \chi_i + 1.
\]

\emph{Existence and regularity via Hodge theory.}
Recall that the quotient $\widetilde{S}_i/\langle T_i\rangle$ is canonically isometric to the base manifold~$B$ (gluing the two $i$-faces of $F$ with the shift $T_i$ and the $j$-faces for $j\neq i$ reconstructs~$B$). Since $B$ is a closed orientable Riemannian manifold (orientability holds because the arithmetic lattices of simplest type in the Millson--Lubotzky construction lie in the identity component $\mathrm{SO}_0(d,1)$, which preserves orientation on~$\mathbb{H}^d$), Hodge theory provides a unique harmonic $1$-form $\omega_i$ on $B$ whose de~Rham class is the Poincar\'e dual of $[H_i]\in H_{d-1}(B;\mathbb{R})$.

We verify that $[\omega_i]\neq 0$: since $H_i$ is non-separating (the complement $F^\circ=B\setminus(H_1\cup\cdots\cup H_D)$ is connected by Proposition~\ref{prop:cut_system}, and $\bigcup_{j\neq i}H_j$ is a finite union of smooth closed hypersurfaces, hence closed with empty interior in $B\setminus H_i$; therefore $F^\circ$ is dense in $B\setminus H_i$, and $B\setminus H_i$ is connected as the closure of a connected set), the homology class $[H_i]$ is nontrivial, and so is its Poincar\'e dual.

The covering $\pi:\widetilde{S}_i\to B$ is the regular $\mathbb{Z}$-cover
classified by the homomorphism
\[
\alpha_i:\pi_1(B)\to\mathbb{Z},\qquad \alpha_i([\gamma])=[\gamma]\cdot [H_i],
\]
given by algebraic intersection with $H_i$: in the cut-and-stack model,
crossing $H_i$ changes the block index by $\pm 1$, while crossing any
$H_j$ with $j\neq i$ leaves the index unchanged. Hence
$\pi_1(\widetilde{S}_i)=\ker\alpha_i$.
Since $[\omega_i]=\mathrm{PD}[H_i]$ represents the same cohomology class as $\alpha_i$
under the de~Rham isomorphism $H^1(B;\mathbb{R})\cong\mathrm{Hom}(\pi_1(B),\mathbb{R})$,
we have $\pi^*[\omega_i]=0$ in $H^1(\widetilde{S}_i;\mathbb{R})$, so
$\tilde\omega_i:=\pi^*\omega_i$ is exact. Choose $\chi_i$ with
$d\chi_i=\tilde\omega_i$.

For the deck generator $T_i$, we have
$d(T_i^*\chi_i-\chi_i)=T_i^*d\chi_i-d\chi_i=0$,
so $T_i^*\chi_i-\chi_i$ is a constant on the connected manifold
$\widetilde{S}_i$. If $\tilde\gamma$ is any path from $x$ to $T_i x$,
its projection $\gamma=\pi\circ\tilde\gamma$ satisfies
$\alpha_i([\gamma])=1$, hence
\[
\chi_i(T_i x)-\chi_i(x)
=\int_{\tilde\gamma} d\chi_i
=\int_\gamma \omega_i
=\langle[\omega_i],[\gamma]\rangle = \alpha_i([\gamma])=1.
\]
Therefore $\chi_i\circ T_i=\chi_i+1$. Since $d\chi_i=\tilde\omega_i$ is harmonic ($\delta\tilde\omega_i=0$), we have $\Delta\chi_i=0$ on $\widetilde{S}_i$. In particular, $\chi_i$ is smooth, and on each block $F$ of the chain the conormal transmission condition
\begin{equation}
\label{eq:transmission}
\partial_{n^+}\chi_i = -\partial_{n^-}\chi_i
\end{equation}
holds across every interior interface (simply because $\chi_i$ is smooth across these interfaces, which are lifts of the smooth hypersurfaces $H_j\subset B$).
We fix the remaining additive constant by requiring $\frac{1}{|H_i^-|}\int_{H_i^-}\chi_i  dA = 0$, so that $\chi_i|_{H_i^-}$ has mean zero and $\chi_i|_{H_i^+}$ has mean $1$.

Define the \emph{effective conductance constant} by the energy per period:
\[
C_i := \int_{F} |\nabla \chi_i|^2  dV >0,
\]
where $F$ is identified with a fundamental domain for $T_i$. Positivity is immediate: if $C_i=0$ then $\chi_i$ would be constant on each period, contradicting the jump condition.

\begin{remark}[Homogenization viewpoint]
\label{rem:hodge}
The harmonic coordinate $\chi_i$ solves the \emph{cell problem} of periodic homogenization for the Laplacian on the $\mathbb{Z}$-periodic manifold $\widetilde{S}_i$: it minimizes the Dirichlet energy per period among all functions satisfying $\chi\circ T_i=\chi+1$ (compare \cite[Ch.~1]{BensoussanLionsPapanicolaou} for the Euclidean analogue). This variational characterization is equivalent to the Hodge-theoretic construction above but generalizes more readily to non-uniform corridor scalings.
\end{remark}

The flux normalization is obtained by integrating by parts on one period: since $\Delta\chi_i=0$ on the interior of~$F$,
\[
C_i = \int_F |\nabla\chi_i|^2  dV = \int_{\partial F} \chi_i  \partial_n\chi_i  dA.
\]
The contributions from the $j$-faces ($j\neq i$) cancel in pairs: on each identified pair $H_j^+\cong H_j^-$, the traces of $\chi_i$ match (periodic boundary condition) and the conormal derivatives satisfy $\partial_{n^+}\chi_i=-\partial_{n^-}\chi_i$ (transmission condition~\eqref{eq:transmission}), so the two boundary integrals cancel. On the two $i$-faces $H_i^\pm$, the traces differ by exactly $1$ (by the jump condition), and the outward normal fluxes satisfy $\partial_{n^+}\chi_i=-\partial_{n^-}\chi_i$ after transport (again by~\eqref{eq:transmission}). Hence
\begin{equation}
\label{eq:flux_normalization}
\int_{H_i^+} \partial_n \chi_i  dA = C_i,
\qquad
\int_{H_i^-} (-\partial_n \chi_i)  dA = C_i.
\end{equation}

\begin{lemma}[Corridor bounds and series scaling]
\label{lem:corridor}
Let $W_{i,K}$ be a finite corridor chain consisting of $K$ consecutive $i$-blocks, with all faces $j\neq i$ sealed by the same gluing as in $\widetilde{S}_i$, so that $\partial W_{i,K}$ consists of exactly two end faces $\Sigma_i^-$ and $\Sigma_i^+$ (each isometric to~$H_i^\pm$; we use the distinct notation $\Sigma$ to distinguish corridor end faces from single-cell boundary faces).

For $u\in H^1(W_{i,K})$ define the \emph{normalized flux averages} on the two ends by
\[
\langle u\rangle_- := \frac{1}{C_i}\int_{\Sigma_i^-} u (-\partial_n\chi_i) dA,
\qquad
\langle u\rangle_+ := \frac{1}{C_i}\int_{\Sigma_i^+} u (\partial_n\chi_i) dA.
\]
(These are normalized weights of total mass $1$ by \eqref{eq:flux_normalization}; they need not be pointwise nonnegative.)

Then:
\begin{enumerate}
\item[(i)] (\emph{Lower bound}) For every $u\in H^1(W_{i,K})$,
\begin{equation}
\label{eq:corridor_lower_bound}
\int_{W_{i,K}} |\nabla u|^2 \ge \frac{C_i}{K} \bigl(\langle u\rangle_+ - \langle u\rangle_-\bigr)^2.
\end{equation}
\item[(ii)] (\emph{Upper bound for constant endpoint data}) For any constants $a,b\in\mathbb{R}$ there exists $u_{a,b}\in H^1(W_{i,K})$ whose traces are \emph{constant} on the end faces, $u_{a,b}\equiv a$ on $\Sigma_i^-$ and $u_{a,b}\equiv b$ on $\Sigma_i^+$, and such that
\begin{equation}
\label{eq:corridor_upper_bound}
\int_{W_{i,K}} |\nabla u_{a,b}|^2
\le
\frac{C_i}{K}(a-b)^2 + \frac{C_{\mathrm{bl}}}{K^2}(a-b)^2,
\end{equation}
where $C_{\mathrm{bl}}>0$ depends only on the fixed block geometry (in particular, is independent of $K$).
\end{enumerate}
Consequently, the minimal Dirichlet energy with constant endpoint values satisfies
\[
\inf\Bigl\{\int_{W_{i,K}}|\nabla u|^2: u|_{\Sigma_i^-}\equiv a,\ u|_{\Sigma_i^+}\equiv b\Bigr\}
=
\frac{C_i}{K}(a-b)^2 + O\left(\frac{(a-b)^2}{K^2}\right).
\]
\end{lemma}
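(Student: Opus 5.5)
Both bounds come from comparing an arbitrary $u$ with the harmonic coordinate $\chi_i$ of the cell problem, restricted to the finite chain $W_{i,K}$. We regard $W_{i,K}$ as $K$ consecutive fundamental domains of $\widetilde S_i$, so $\chi_i$ is defined on it and harmonic. By the jump condition, its end-face traces satisfy $\chi_i|_{\Sigma_i^+}=\chi_i|_{\Sigma_i^-}\circ T_i^{K}+K$ after transport. On the interior interfaces $\chi_i$ is smooth, so the transmission condition~\eqref{eq:transmission} holds there. The total energy is $\int_{W_{i,K}}|\nabla\chi_i|^2=KC_i$, by periodicity of $\nabla\chi_i$. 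The net flux of $\chi_i$ through each end face is $C_i$, by~\eqref{eq:flux_normalization}.

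For the lower bound (i), integrate by parts once. Since $\Delta\chi_i=0$ in $W_{i,K}$ and the interior interface contributions cancel (transmission plus matching traces of $u\in H^1$), we get
\[
\int_{W_{i,K}}\nabla u\cdot\nabla\chi_i=\int_{\Sigma_i^+}u\,\partial_n\chi_i+\int_{\Sigma_i^-}u\,\partial_n\chi_i=C_i\bigl(\langle u\rangle_+-\langle u\rangle_-\bigr).
\]
Cauchy--Schwarz then gives $C_i^2(\langle u\rangle_+-\langle u\rangle_-)^2\le \|\nabla u\|^2\cdot KC_i$, which is exactly~\eqref{eq:corridor_lower_bound}. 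This step is routine. The only point to check is that the weights $\pm\partial_n\chi_i$ are smooth, hence in $L^2$ of the faces, so the pairing with the $H^{1/2}$ trace is legitimate; this holds because $\chi_i$ is smooth up to $\partial F$.

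For the upper bound (ii), set $\psi:=\chi_i/K$, shifted so that it has flux-average $0$ at $\Sigma_i^-$ and $1$ at $\Sigma_i^+$. Its traces are not constant on the end faces, so we correct them with boundary layers. On each end face, $\chi_i$ equals a fixed smooth function $h^\pm$ (a translate of $\chi_i|_{H_i^\pm}$) plus a constant, so $\psi$ on each end face equals $h^\pm/K$ plus a constant. We choose cutoffs $\eta^\pm$ supported in the first and last block respectively, with $\eta^\pm=1$ on $\Sigma_i^\pm$ and $0$ away from it. Define
\[
\tilde\psi:=\psi-\eta^-\,\frac{h^--\bar h^-}{K}-\eta^+\,\frac{h^+-\bar h^+}{K},
\]
where $\bar h^\pm$ are the plain face means, extended into the end block along a collar. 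Then $\tilde\psi$ is constant on each end, and $u_{a,b}:=a+(b-a)\tilde\psi$, suitably affinely normalized so the end constants are exactly $a$ and $b$.

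The energy estimate is where the care lies, and it is the main obstacle. Expanding $|\nabla\tilde\psi|^2$ gives three contributions:
\begin{itemize}
\item the main term $C_i/K$;
\item a pure correction term $O(K^{-2})$, with constant from $\|h^\pm\|_{H^1}$ and the cutoff on a single fixed block;
\item a cross term $2\int\nabla\psi\cdot\nabla(\text{corr})$.
\end{itemize}
The cross term is a priori only $O(K^{-1}\cdot K^{-1})=O(K^{-2})$, since $|\nabla\psi|=O(1/K)$ pointwise on the fixed end block; this suffices.

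There is, however, a subtlety: the new end constants are the plain means $\bar h^\pm/K$ plus shifts, not the flux averages. The difference between the two end constants of $\tilde\psi$ is therefore $1+O(1/K)$ rather than exactly $1$. After rescaling to hit $a$ and $b$ exactly, this changes the energy by a factor $(1+O(1/K))^{-2}$, contributing $O(K^{-2})$. Hence the error term is $O((a-b)^2/K^2)$, with $C_{\mathrm{bl}}$ depending only on $F$ (through $\chi_i|_F$ and the cutoff).

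The final ``consequently'' follows by combining the two bounds. For $u$ constant on the ends, the flux averages equal the constants because the weights have total mass $1$. So (i) gives the lower bound $C_i(a-b)^2/K$ on the infimum, and (ii) gives the matching upper bound up to $O((a-b)^2/K^2)$.
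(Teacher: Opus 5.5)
Your proposal is correct and follows essentially the same route as the paper. For (i) you use Cauchy--Schwarz against $\chi_i$, interface cancellation, and $\int_{W_{i,K}}|\nabla\chi_i|^2=KC_i$; for (ii) you use the affine corrector $a+\frac{b-a}{K}\chi_i$, repaired by cutoff boundary layers of gradient size $O(|a-b|/K)$ on the end blocks, with the cross term bounded by Cauchy--Schwarz.

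The one cosmetic difference is that the paper gauges $\chi_i$ by its plain mean on $\Sigma_i^-$ and uses $\chi_i$ itself to extend the mismatch. Since $\Sigma_i^+=T_i^K\Sigma_i^-$ and $\chi_i\circ T_i^K=\chi_i+K$, the end constants of your $\tilde\psi$ differ by exactly $1$. So your final rescaling is trivial, and the hedge $1+O(1/K)$, which would otherwise need care for small $K$, is unnecessary.
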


\begin{proof}
\textbf{(i)} Let $\chi$ denote the restriction of $\chi_i$ to the finite corridor $W_{i,K}$. By the cell-problem construction, $\chi$ is harmonic on each block interior ($\Delta\chi=0$) and satisfies the conormal transmission condition~\eqref{eq:transmission} across every interior glued interface; when integrating by parts on $W_{i,K}$ these two facts ensure that contributions from interior interfaces cancel pairwise, so only the two end faces contribute. Cauchy-Schwarz gives
\[
\int_{W_{i,K}} |\nabla u|^2
\ge
\frac{\bigl(\int_{W_{i,K}} \nabla u\cdot \nabla \chi\bigr)^2}{\int_{W_{i,K}} |\nabla \chi|^2}.
\]
The denominator equals $K C_i$ by definition of $C_i$ as energy per period. For the numerator, integration by parts on each block gives $\int_{W_{i,K}} \nabla u\cdot \nabla \chi = -\int_{W_{i,K}} u\Delta\chi + \int_{\partial W_{i,K}} u\partial_n\chi\,dA$ plus interior interface terms; since $\Delta\chi=0$ on each block interior and the transmission condition~\eqref{eq:transmission} cancels the interior interface contributions (the test function $u\in H^1(W_{i,K})$ has matching traces, and $\partial_{n^+}\chi=-\partial_{n^-}\chi$), only the two end faces survive:
\[
\int_{W_{i,K}} \nabla u\cdot \nabla \chi
=
\int_{\Sigma_i^+} u \partial_n\chi dA + \int_{\Sigma_i^-} u \partial_n\chi dA
=
C_i\bigl(\langle u\rangle_+ - \langle u\rangle_-\bigr),
\]
using the definitions of $\langle\cdot\rangle_\pm$. Substituting proves \eqref{eq:corridor_lower_bound}.

\smallskip
\textbf{(ii)} Consider the affine corrector profile $u_0 := a + \frac{b-a}{K}\chi$. Its energy is exactly
\[
\int_{W_{i,K}} |\nabla u_0|^2
=
\frac{(b-a)^2}{K^2}\int_{W_{i,K}}|\nabla\chi|^2
=
\frac{C_i}{K}(b-a)^2.
\]
However, the trace of $u_0$ on an end face need not be pointwise constant: it equals $a + \frac{b-a}{K}\chi|_{\Sigma_i^-}$ on the left and $a + \frac{b-a}{K}\chi|_{\Sigma_i^+}$ on the right. Since $\chi\circ T_i=\chi+1$, we may write $\chi|_{\Sigma_i^\pm}=c_\pm + \psi$ where $c_\pm$ are constants (with $c_+-c_-=K$) and $\psi$ is the \emph{same} mean-zero oscillatory part on both faces (identified via the chain periodicity). In particular, the boundary mismatch functions
\[
\delta_- := a - u_0|_{\Sigma_i^-},\qquad \delta_+ := b - u_0|_{\Sigma_i^+}
\]
satisfy $\|\delta_\pm\|_{L^2(\Sigma_i^\pm)} \le C |a-b|/K$ for a constant $C$ depending only on the fixed cell geometry. With our gauge convention (mean of $\chi_i$ over $\Sigma_i^-$ equals zero), the mismatches are given explicitly by
\[
\delta_- = -\frac{b-a}{K}\chi_i\big|_{\Sigma_i^-},\qquad \delta_+ = -\frac{b-a}{K}(\chi_i - K)\big|_{\Sigma_i^+},
\]
where $\chi_i|_{\Sigma_i^-}$ has mean zero and $(\chi_i-K)|_{\Sigma_i^+}$ also has mean zero (since $\chi_i|_{\Sigma_i^+}$ has mean $K$ by the jump condition). In particular, both $\delta_\pm$ are $O(|a-b|/K)$ multiples of fixed mean-zero smooth functions on the respective faces.

Choose smooth cutoff functions $\eta_\pm$ supported in a collar neighborhood of the corresponding end face $\Sigma_i^\pm$ within the first/last block, with $\eta_\pm\equiv 1$ on $\Sigma_i^\pm$ and $\eta_\pm\equiv 0$ outside the collar. Since the collar is chosen disjoint from all other boundary faces $H_j^\pm$ ($j\neq i$) of~$F$, the correction automatically respects the side-face identifications in the corridor manifold. Since $\chi_i$ is a smooth function defined on the entire chain, it provides an explicit smooth extension of its own boundary trace: on the first block we set $\delta_-^{\mathrm{ext}} := -\frac{b-a}{K}\chi_i$, and on the last block we set $\delta_+^{\mathrm{ext}} := -\frac{b-a}{K}(\chi_i - K)$ (where $\chi_i$ here denotes the restriction to the actual last block of the corridor, on which $\chi_i|_{\Sigma_i^+}$ has mean~$K$). The supports of $\eta_-$ and $\eta_+$ are disjoint: for $K>1$ they lie in different blocks, and for $K=1$ they are supported in disjoint collar neighborhoods of the two distinct boundary components $\Sigma_i^-$ and $\Sigma_i^+$ of the single block. In either case there is no cross-term between the two boundary-layer corrections. By the product rule,
\[
\int_{\text{(end block)}} |\nabla(\eta_\pm \delta_\pm^{\mathrm{ext}})|^2
\le
C_{\mathrm{bl}}\frac{(a-b)^2}{K^2},
\]
where $C_{\mathrm{bl}}$ depends only on $\|\nabla\chi_i\|_{L^2(F)}$, $\|\chi_i\|_{L^\infty(F)}$, and $\|\nabla\eta_\pm\|_{L^\infty}$, all of which are fixed geometric data independent of $K$. No abstract trace extension operators are needed. Define
\[
u_{a,b} := u_0 + \eta_- \delta_-^{\mathrm{ext}} + \eta_+ \delta_+^{\mathrm{ext}}.
\]
Then $u_{a,b}\equiv a$ on $\Sigma_i^-$ and $u_{a,b}\equiv b$ on $\Sigma_i^+$ by construction, and its energy differs from that of $u_0$ by at most the two boundary-layer energies plus a cross-term. For the cross-term: $\nabla u_0=\frac{b-a}{K}\nabla\chi$, and on a single boundary block $\int_{\text{block}}|\nabla\chi|^2=C_i$, so $\|\nabla u_0\|_{L^2(\text{block})}=|b-a|\sqrt{C_i}/K = O(|b-a|/K)$. The boundary layer extension satisfies $\|\nabla(\eta_\pm\delta_\pm^{\mathrm{ext}})\|_{L^2}=O(|b-a|/K)$ as well (by the same bound on $\delta_\pm$). Their Cauchy-Schwarz product is therefore $O(|b-a|/K)\times O(|b-a|/K)=O((b-a)^2/K^2)$, consistent with the $C_{\mathrm{bl}}/K^2$ bound. This proves \eqref{eq:corridor_upper_bound}.
\end{proof}

\begin{remark}
Since the hypersurfaces $H_1,\dots,H_D$ are pairwise disjoint (Proposition~\ref{prop:cut_system}), the block $F$ is a compact manifold with \emph{smooth} boundary consisting of $2D$ smooth faces $H_i^\pm$. Standard Sobolev trace theorems therefore apply on each face, and the constants $C_{\mathrm{tr}}, C_{\mathrm{bl}}$ depend only on the fixed geometry of~$F$.
\end{remark}

\subsection{Macroscopic networks with heavy vertices}

Fix a strictly increasing target spectrum $0=\lambda^*_0<\lambda^*_1<\cdots<\lambda^*_n$.
Choose odd $N\ge \max(n+1,5)$ and set $D=(N-1)/2$.
(We require $N\ge 5$ because for $N=3$ one has $D=1$, and $2D$-regular means $2$-regular, i.e.\ a union of disjoint cycles rather than an expander; the spectral-gap bound requires $D\ge 2$, i.e.\ degree $2D\ge 4$.)
By Walecki's theorem (see, e.g., \cite{Alspach2008}), the edges of $K_N$ decompose into $D$ edge-disjoint \emph{undirected} Hamiltonian cycles; we choose one of the two cyclic orientations for each cycle to obtain $D$ directed Hamiltonian cycles. This orientation ensures each vertex has exactly one incoming and one outgoing edge of each color, and we assign a \emph{color} $c(e)\in\{1,\dots,D\}$ to each edge accordingly.

Apply Lemma~\ref{lem:cdv_complete} to $K_N$ with vertex measures $V_v\equiv 1$ to obtain strictly positive edge weights $w_e^*>0$ such that the non-zero eigenvalues of $L_{K_N}$ are
\[
\mu_k = \lambda_k^* V_F \qquad (k=1,\dots,n),
\]
where $V_F:=\Vol(F)$ is the Riemannian volume of the fundamental block (and, if $N>n+1$, the remaining eigenvalues $\mu_{n+1},\dots,\mu_{N-1}$ are chosen to satisfy $\mu_k > (\lambda_n^*+1) V_F$ for all $k\ge n+1$; this is possible since Lemma~\ref{lem:cdv_complete} allows free prescription of a strictly increasing list on $K_N$).

Let $m\in\mathbb{N}$ be a large scaling parameter. Define:
\begin{itemize}
    \item \textbf{Heavy vertices:} each macroscopic vertex $v$ will be replaced by a \emph{cluster} of $m^3$ blocks $F$.
    \item \textbf{Long corridors:} each macroscopic edge $e$ will be replaced by a chain of
    \[
    K_e(m):=\left\lfloor \frac{m C_{c(e)}}{w_e^*}\right\rfloor
    \]
    blocks along color $c(e)$ (since $C_{c(e)}>0$ and $w_e^*>0$, we have $K_e(m)\ge 1$ for all $m$ sufficiently large).
\end{itemize}

\paragraph{Expander wiring inside clusters (with exposed ports).}
Inside each cluster we glue the $m^3$ blocks along their $2D$ faces so that (i) the internal gluing graph is a connected expander with uniform spectral gap, and (ii) for each color $i$ we expose exactly one incoming and one outgoing face to attach the corridors corresponding to the unique incoming/outgoing macroscopic edges of color $i$ incident to $v$.

Concretely, for each cluster we need a simple $2D$-regular graph on $m^3$ vertices that (a) has a uniform spectral gap, (b) has edge-connectivity $2D$, and (c) admits a decomposition into $D$ directed permutation color classes.

For (a) and (b): by Friedman's theorem \cite{Friedman2008}, a uniform random simple $2D$-regular graph on $n$ vertices ($2D\ge 4$ fixed) satisfies $\lambda_2\le 2\sqrt{2D-1}+o(1)$ a.a.s., giving a normalized-Laplacian spectral gap $\ge c_{\mathrm{exp}}(D)>0$. Such graphs also have vertex-connectivity equal to $2D$ a.a.s.\ (\cite[\S 2.6]{Wormald1999}), hence edge-connectivity $2D$. In particular, for all sufficiently large $m$ a simple $2D$-regular graph $\Gamma_v$ on $m^3$ vertices with both properties exists.

For (c): a connected $2D$-regular simple graph is bridgeless (a bridge in a connected $2k$-regular graph would create a component with odd degree sum, a contradiction). By Petersen's classical $2$-factorization theorem, every bridgeless regular graph of even degree decomposes into edge-disjoint $2$-factors. Applying this to $\Gamma_v$ yields $D$ edge-disjoint $2$-factors $F_1,\dots,F_D$; each $F_i$ is a union of cycles covering all $m^3$ vertices. Orient every cycle in $F_i$ to obtain a permutation $\sigma_{v,i}$ of $\{1,\dots,m^3\}$. Glue the $i$th outgoing face of block $t$ to the $i$th incoming face of block $\sigma_{v,i}(t)$. By construction, the resulting colored directed graph has exactly one incoming and one outgoing edge of each color at every vertex.

Let $h_0=h_0(D)>0$ be a uniform lower bound on the edge-expansion Cheeger constant of the chosen graphs $\Gamma_v$ (using the convention $h(\Gamma)=\min_{|S|\le n/2}|\partial_E S|/|S|$, where $\partial_E S$ denotes the set of edges between $S$ and its complement); by the discrete Cheeger inequality applied to the spectral gap $c_{\mathrm{exp}}(D)$, such a bound $h_0>0$ exists independently of~$m$.

To expose ports, we select one edge per color to serve as the external corridor attachment. For each color $i$, choose an edge $e_i$ of color $i$ (i.e.\ an edge $s_i\to\sigma_{v,i}(s_i)$) and delete it from the internal graph. The freed outgoing face of $s_i$ and the freed incoming face of $\sigma_{v,i}(s_i)$ become the port faces for color~$i$, to which the external corridor of that color will be attached. We choose the $D$ deleted edges to be pairwise vertex-disjoint by a greedy argument: each color class is a union of disjoint cycles on $m^3$ vertices with $m^3$ edges, so after choosing $i-1$ port edges (involving $2(i-1)$ vertices), each such vertex is incident to at most $2$ color-$i$ edges, giving at most $4(i-1)$ blocked color-$i$ edges; for $m^3>4D$ there remains a color-$i$ edge disjoint from the previous ones. This deletes exactly $D$ internal edges.

The modified internal graph $\Gamma_v^{\circ}$ has $2D$ vertices of degree $2D-1$ (the two endpoints of each deleted edge) and the remaining $m^3-2D$ vertices of degree~$2D$.

We claim $\Gamma_v^{\circ}$ retains a positive Cheeger constant uniformly in~$m$:
\begin{itemize}
\item \emph{Connectivity after port deletion.} The chosen realization has edge-connectivity~$2D$ (property~(ii) above). Since $2D>D$, deleting the $D$ port edges preserves connectivity.
\item \emph{Cheeger bound.} Deleting $D$ edges changes the edge boundary of any subset by at most $D$, hence for any $S$ with $|S|\le m^3/2$,
\[
|\partial S|_{\Gamma_v^{\circ}} \ge |\partial S|_{\mathrm{old}} - D \ge h_0|S| - D.
\]
If $|S|\ge 2D/h_0$ this gives $|\partial S|_{\Gamma_v^{\circ}}/|S| \ge h_0/2$. If $|S|<2D/h_0$ then connectivity implies $|\partial S|_{\Gamma_v^{\circ}}\ge 1$, giving $|\partial S|/|S|\ge h_0/(2D)$.
\end{itemize}
Thus $\Gamma_v^{\circ}$ has Cheeger constant $\ge \min(h_0/2,  h_0/(2D))=h_0/(2D)>0$, uniformly in~$m$. (This Cheeger bound, together with the uniform degree bound $\le 2D$, is the only property of $\Gamma_v^{\circ}$ used in the sequel; see Lemma~\ref{lem:cluster_poincare}.)

\begin{remark}[Probabilistic method]
\label{rem:probabilistic}
The expander wiring is constructed via the probabilistic method. Friedman's theorem \cite{Friedman2008} and the connectivity results of \cite[\S 2.6]{Wormald1999} guarantee the existence of a simple $2D$-regular graph with near-optimal spectral gap and full edge-connectivity on $m^3$ vertices, for each sufficiently large~$m$. Petersen's classical $2$-factorization theorem then decomposes this graph into $D$ directed permutation color classes, yielding the required Schreier coloring. The Cheeger analysis after port deletion is deterministic, depending only on the Cheeger constant and edge-connectivity of the chosen graph. The resulting construction is therefore existential (non-constructive) but yields a definite covering manifold $M_m$ for each~$m$.
\end{remark}

\paragraph{The global Schreier covering.}
Assemble all clusters and corridors (with the unused faces on corridor interior blocks sealed by self-loops in the other colors, exactly as in the definition of $\widetilde{S}_i$; concretely, for each such block the two faces $H_j^+$ and $H_j^-$ of color $j\neq c(e)$ are identified with each other, which simply reconstructs the smooth local geometry of~$B$ across that interface and introduces no singularities) to obtain a finite connected directed $D$-regular colored graph $\Gamma_m$ in which each vertex has exactly one incoming and one outgoing edge of each color. (Connectivity follows from the construction: the corridors connect clusters according to $K_N$, and the internal expander wiring makes each cluster connected.) Such a connected $D$-regular graph is precisely the Schreier graph of a transitive action of $F_D$ on a finite set, and hence of a finite-index subgroup $\Lambda_m<F_D$: each generator of $F_D$ acts as a permutation of the vertex set, and $\Lambda_m$ is the stabilizer of a chosen basepoint (see, e.g., \cite[\S 11.1]{HLW2006}). The subgroup $\Lambda_m$ determines a finite covering $X_m'\to X_D$ of the bouquet of $D$ circles; pulling this back along the surjection $f:B\to X_D$ (Lemma~\ref{lem:bouquet_map}) yields a finite covering
\[
M_m := f^*(X_m') \longrightarrow B.
\]
The pullback $M_m$ is connected: since $f_*:\pi_1(B)\twoheadrightarrow F_D$ is surjective and $\Lambda_m<F_D$ is a finite-index subgroup, the image $f_*(\pi_1(B))$ acts transitively on $F_D/\Lambda_m$, so the pullback covering has a single connected component (by the standard orbit description of covering components; see, e.g., \cite[Ch.~1, \S 1.3]{Hatcher}). Since $M_m$ is a genuine Riemannian covering of the smooth hyperbolic manifold~$B$, it inherits the constant curvature $\kappa=-1$ metric by pullback and is itself a smooth Riemannian manifold (in particular, all gluing interfaces are smooth). The decomposition of $M_m$ into blocks isometric to~$F$, with gluing pattern given by $\Gamma_m$, follows from Remark~\ref{rem:pullback_decomposition}: the triviality of the covering over~$F^\circ$ (ensured by Lemma~\ref{lem:bouquet_map}(iii), which places $f(F^\circ)$ in the contractible set $X_D\setminus\{p_1,\dots,p_D\}$) gives $|S|$ isometric copies of~$F^\circ$ in~$M_m$; cutting along the lifts of the $H_i$ produces $|S|$ copies of the compact block~$F$, glued along their boundary faces according to~$\Gamma_m$.

\subsection{Discrete scaling (graph model)}

Define a discrete Laplacian $L_m$ on $K_N$ with:
\[
\text{vertex measures }  \mathsf{V}_v(m) := m^3 V_F,
\qquad
\text{edge weights } \mathsf{w}_e(m):=\frac{C_{c(e)}}{K_e(m)}.
\]
This is the macroscopic ``heavy vertex'' graph model.

\begin{lemma}[Scaling of the macroscopic graph]
\label{lem:scaling}
For each fixed $k\in\{1,\dots,N-1\}$,
\[
\lambda_k(L_m) = \frac{\mu_k}{m^4 V_F} (1+O(m^{-1})),
\]
where $\mu_k=\lambda_k(L^*)$ are the eigenvalues of the reference Laplacian $L^*$ on $K_N$ with unit vertex measures and edge weights $w_e^*$. In particular, for $k\le n$ (where $\mu_k=\lambda_k^*V_F$ by construction) this gives $\lambda_k(L_m) = m^{-4}\lambda_k^* (1+O(m^{-1}))$, and for $n+1\le k\le N-1$ (where $\mu_k>(\lambda_n^*+1)V_F$ by the padding choice in~\S4.3) it gives $m^4\lambda_k(L_m)\to \mu_k/V_F > \lambda_n^*+1$.
\end{lemma}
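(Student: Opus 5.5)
The plan is to compare $L_m$ with a rescaled copy of the reference Laplacian $L^*$ by pure algebra on the finite-dimensional space $\mathbb{R}^{K_N}$. Everything here is finite-dimensional with $N$ fixed, so no analysis beyond elementary perturbation bounds is needed.

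\textbf{Step 1: edge weights.} From $K_e(m)=\lfloor mC_{c(e)}/w_e^*\rfloor$ we have $mC_{c(e)}/w_e^*-1<K_e(m)\le mC_{c(e)}/w_e^*$. Since $K_e(m)\ge1$ for large $m$, this gives
\[
\mathsf{w}_e(m)=\frac{C_{c(e)}}{K_e(m)}=\frac{w_e^*}{m}\,\bigl(1+\rho_e(m)\bigr),\qquad 0\le\rho_e(m)\le \frac{w_e^*}{mC_{c(e)}-w_e^*}=O(m^{-1}).
\]
The implied constant is uniform over the finitely many edges of $K_N$, depending only on $\max_e w_e^*/C_{c(e)}$.

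\textbf{Step 2: factor out the scaling.} The vertex measure is the constant $m^3V_F$. By Remark~\ref{rem:constant_measure} (the constant-measure scaling), $L_m=(m^3V_F)^{-1}\widetilde L_m$, where $\widetilde L_m$ is the unit-measure Laplacian with weights $\mathsf{w}_e(m)$. Write $\widetilde L_m=m^{-1}(L^*+E_m)$, where $E_m$ is the unit-measure Laplacian with weights $w_e^*\rho_e(m)\ge0$. Then
\[
L_m=\frac{1}{m^4V_F}\,(L^*+E_m).
\]
It therefore suffices to show $\lambda_k(L^*+E_m)=\mu_k(1+O(m^{-1}))$ for $1\le k\le N-1$.

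\textbf{Step 3: relative perturbation via quadratic forms.} Use the Dirichlet forms $Q^*(f)=\sum_e w_e^*(f(i)-f(j))^2$ and $Q_m(f)=\sum_e w_e^*(1+\rho_e)(f(i)-f(j))^2$. Both are taken with respect to the same $\ell^2$ inner product, since the unit measure is unchanged. Termwise, with $\rho_{\max}(m)=\max_e\rho_e(m)$,
\[
Q^*(f)\le Q_m(f)\le (1+\rho_{\max})\,Q^*(f).
\]
The Courant--Fischer min-max principle then gives $\mu_k\le\lambda_k(L^*+E_m)\le(1+\rho_{\max})\mu_k$ for every $k$. This is the desired multiplicative estimate, and it requires no spectral gap or simplicity assumption. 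The factor $1+O(m^{-1})$ is uniform in $k$.

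This relative (multiplicative) comparison is the one point requiring care. An additive Weyl bound $|\lambda_k-\mu_k|\le\|E_m\|$ would also suffice, because $\mu_k>0$ for $k\ge1$ (as $K_N$ is connected with positive weights), but the form comparison is cleaner.

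\textbf{Step 4: consequences.} For $k\le n$, the choice $\mu_k=\lambda_k^*V_F$ gives $\lambda_k(L_m)=m^{-4}\lambda_k^*(1+O(m^{-1}))$. For $n+1\le k\le N-1$, we get $m^4\lambda_k(L_m)=\frac{\mu_k}{V_F}(1+O(m^{-1}))\to\mu_k/V_F>\lambda_n^*+1$ by the padding choice.

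I expect no genuine obstacle. The only points to check are the uniformity of the floor-function error over the edges and the observation that the perturbation $E_m$ is itself a nonnegative-weight Laplacian, which is what makes the two-sided form comparison valid.
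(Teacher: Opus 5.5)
Your proof is correct and follows the paper's route: you bound the floor-function error in $\mathsf{w}_e(m)$, factor out the scaling as $L_m=\frac{1}{m^4V_F}(L^*+E_m)$, and then apply a finite-dimensional perturbation argument. The one difference is the last step. You use a two-sided Courant--Fischer form comparison, relying on $\rho_e\ge 0$, which gives the explicit multiplicative bound $\mu_k\le\lambda_k(L^*+E_m)\le(1+\rho_{\max})\mu_k$. The paper instead uses an additive $O(m^{-5})$ operator-level perturbation and converts it to a relative error via $\mu_k>0$.
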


\begin{proof}
By definition of $K_e(m)$,
\[
\mathsf{w}_e(m)=\frac{C_{c(e)}}{\lfloor m C_{c(e)}/w_e^*\rfloor}
= \frac{w_e^*}{m} \bigl(1+O(m^{-1})\bigr),
\]
uniformly over the finitely many edges of $K_N$.

Let $L^*$ denote the Laplacian on $K_N$ with unit vertex measures and edge weights $w_e^*$. Then $L_m$ is a perturbation of the scaled operator
\[
\frac{1}{m^4 V_F} L^*,
\]
because multiplying all edge weights by $1/m$ multiplies the Laplacian by $1/m$, and multiplying all vertex measures by $m^3 V_F$ divides it by $m^3 V_F$ (since the vertex measure appears in the denominator of~\eqref{eq:discrete_laplacian}).

Since the coefficient errors are $O(m^{-5})$ at the operator level and $K_N$ is finite-dimensional, eigenvalues perturb by $O(m^{-5})$ in absolute size, hence by $O(m^{-1})$ in relative size after multiplying by $m^4$. Therefore, for every $k\in\{1,\dots,N-1\}$,
\[
\lambda_k(L_m)=\frac{1}{m^4V_F}\mu_k (1+O(m^{-1})).
\]
For $k\le n$, $\mu_k=\lambda_k^*V_F$ by construction, giving $\lambda_k(L_m)= m^{-4}\lambda_k^* (1+O(m^{-1}))$.
\end{proof}

\subsection{Spectral reduction: from \texorpdfstring{$M_m$}{Mm} to \texorpdfstring{$L_m$}{Lm}}

Let $\nu_k(M_m)$ denote the $k$th eigenvalue (counting multiplicity) of the Laplace-Beltrami operator on $(M_m,g_{M_m})$ (curvature $-1$).

We work throughout with $L^2$-normalized functions: $\|u\|_{L^2(M_m)}=1$.
We write $\langle \bar u, L_m \bar u\rangle_{\mathsf{V}}:=\sum_v \mathsf{V}_v \bar u_v (L_m\bar u)_v = \sum_e \mathsf{w}_e(\bar u_{v'}-\bar u_v)^2$ for the Dirichlet energy associated to the weighted graph Laplacian $L_m$ (note: this is the $\ell^2(\mathsf{V})$-inner product, \emph{not} the Euclidean product $\bar u^{\mathsf T}L_m\bar u$, which would include spurious factors of $\mathsf{V}_v^{-1}$).
Fix the decomposition of $M_m$ into the $N$ vertex clusters $U_{v,m}$
($\Vol(U_{v,m})=m^3 V_F$) and the $\binom{N}{2}$ corridors $W_{e,m}$
($\Vol(W_{e,m})=K_e(m)V_F = O(m)$), sharing only their common port faces.

The proof of the spectral reduction proceeds through six modular ingredients, stated below as standalone results and then assembled into the main comparison:

\begin{enumerate}
\item[\textbf{(A)}] \emph{Uniform cluster Poincare inequality} (Lemma~\ref{lem:cluster_poincare}): a uniform Neumann gap on expander-glued blocks.
\item[\textbf{(B)}] \emph{Quantitative trace estimate at ports} (estimate~\eqref{eq:trace_from_cluster}): flux averages at port faces approximate cluster means.
\item[\textbf{(C)}] \emph{Corridor effective conductance} (Lemma~\ref{lem:corridor}): upper and lower bounds on corridor Dirichlet energies in terms of endpoint boundary data.
\item[\textbf{(C$'$)}] \emph{Corridor $L^2$-mass bound}: low-energy functions have negligible $L^2$ mass on corridors.
\item[\textbf{(D)}] \emph{Upper comparison via prolongation}: $\nu_k(M_m)\le\lambda_k(L_m)(1+O(m^{-1}))$.
\item[\textbf{(E)}] \emph{Lower comparison via cluster averaging}: $\nu_k(M_m)\ge\lambda_k(L_m)(1-o(1))$.
\end{enumerate}
These are combined with a parasitic eigenvalue bound (Proposition~\ref{prop:parasitic}) that ensures no unwanted eigenvalues enter the target spectral window.

\smallskip
\noindent\textbf{(A) Uniform Poincare inequality on clusters.}

\begin{lemma}[Uniform Neumann gap for expander-glued blocks]
\label{lem:cluster_poincare}
Let $F$ be a compact Riemannian manifold with boundary having $2D$ boundary
faces, and let $\Gamma$ be a connected graph on $m^3$ vertices with
combinatorial Cheeger constant $h(\Gamma)\ge h_0>0$.
Let $U$ be the Riemannian manifold obtained by gluing $m^3$ copies of $F$ along
their faces according to $\Gamma$ (with Neumann boundary on the $O(D)$ exposed
port faces). Then there exists $C_P=C_P(F,h_0)>0$, independent of $m$,
such that for every $u\in H^1(U)$,
\begin{equation}
\label{eq:cluster_poincare}
\int_{U} |u-\bar u|^2 \le C_P \int_{U} |\nabla u|^2,
\qquad
\bar u := \frac{1}{\Vol(U)}\int_{U} u.
\end{equation}
\end{lemma}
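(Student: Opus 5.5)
We prove the inequality by the standard discretization (``rough isometry'') argument, comparing the continuum Poincar\'e inequality on $U$ with the discrete Poincar\'e inequality on $\Gamma$. We split $u-\bar u$ into an intra-block fluctuation and an inter-block fluctuation of block means.

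\emph{Step 1 (local Neumann Poincar\'e).} On the single compact block $F$ there is a Neumann Poincar\'e constant $c_F$:
\[
\int_F |u-u_F|^2\le c_F\int_F|\nabla u|^2, \qquad u_F:=\Vol(F)^{-1}\int_F u.
\]
We also need a two-block version. For any pair of blocks $F_s,F_t$ glued along one face (an edge $st$ of $\Gamma$), the union $F_s\cup F_t$ is one of finitely many isometry types: the face of either color and either orientation, plus a possible self-gluing if $\Gamma$ had loops, which is excluded since the graph is simple. Hence a single constant $c_2$ gives
\[
|u_s-u_t|^2\le \frac{2c_2}{V_F}\int_{F_s\cup F_t}|\nabla u|^2,
\]
obtained by applying the Neumann Poincar\'e inequality on the connected union and Cauchy--Schwarz to each block mean. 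The constants $c_F,c_2$ depend only on $F$.

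\emph{Step 2 (discrete Poincar\'e on $\Gamma$).} Let $\phi(s)=u_s$ be the vector of block means. The Cheeger bound $h(\Gamma)\ge h_0$ and maximal degree $\le 2D$ give, via the discrete Cheeger inequality (the easy direction $\lambda_2\ge h^2/(2\cdot 2D)$ for the combinatorial Laplacian), a discrete Poincar\'e inequality
\[
\sum_s|\phi(s)-\bar\phi|^2\le \frac{4D}{h_0^2}\sum_{s\sim t}|\phi(s)-\phi(t)|^2,
\]
with $\bar\phi$ the unweighted average of the block means, which equals $\bar u$ because all blocks have equal volume. Summing Step~1 over edges, each block belongs to at most $2D$ edges, so
\[
\sum_{s\sim t}|u_s-u_t|^2\le \frac{2c_2}{V_F}\,2D\int_U|\nabla u|^2.
\]

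\emph{Step 3 (assembly).} Write $u-\bar u=(u-u_s)+(u_s-\bar u)$ on $F_s$. Then
\[
\int_U|u-\bar u|^2\le 2\sum_s\int_{F_s}|u-u_s|^2+2V_F\sum_s|u_s-\bar u|^2,
\]
which is at most
\[
\Bigl(2c_F+\frac{32D^2c_2}{h_0^2}\Bigr)\int_U|\nabla u|^2.
\]
This gives $C_P$ independent of $m$. The Neumann condition at the ports plays no role beyond $u\in H^1(U)$ carrying no boundary constraint.

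The main obstacle is Step~1's uniformity. The block-pair unions must come in finitely many isometry classes with connected interiors, so that a single $c_2$ works. This holds because gluings are only along matching faces $H_i^+\leftrightarrow H_i^-$, and because the removed port edges only reduce which pairs occur. A second point to check is that the Cheeger-to-spectral-gap direction is applied to the combinatorial (not normalized) Laplacian with the correct degree factor. Either normalization is fine since the degrees are between $2D-1$ and $2D$.
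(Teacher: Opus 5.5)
Your proof is correct and follows essentially the same domain-decomposition route as the paper: local Neumann Poincar\'e on blocks, coupling of adjacent block means, the discrete Poincar\'e inequality from the Cheeger bound $\lambda_1(\Gamma)\ge h_0^2/(4D)$, and assembly. The only difference is that you couple adjacent means via the Neumann Poincar\'e inequality on the two-block union $F_s\cup F_t$ (of which there are finitely many isometry types), whereas the paper passes through the shared face average and a trace inequality on the single block $F$. One small correction that does not affect the argument: $\lambda_2\ge h^2/(2d_{\max})$ is the nontrivial direction of the discrete Cheeger inequality, while the easy direction is $\lambda_2\le 2h$.
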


\begin{proof}
We give a self-contained domain-decomposition argument (compare Kanai \cite{Kanai1985} and Mantuano \cite{Mantuano2005} for the related rough-isometry discretization approach on closed manifolds).
Let $F_1,\dots,F_{m^3}$ denote the copies of~$F$ inside~$U$, and
let $\bar u_j:=V_F^{-1}\int_{F_j}u$ be the block mean.
For any $u\in H^1(U)$ with global mean $\bar u=0$
(equivalently $\sum_j\bar u_j=0$), we estimate
$\int_U|u|^2$ from above by combining local and global Poincare inequalities.

\smallskip
\emph{Step~1 (local Poincare on blocks).}
Each block $F_j$ is isometric to the fixed compact manifold~$F$ with smooth boundary, so
$\lambda_1^{\mathrm{Neu}}(F)=:\lambda_F>0$ gives
\begin{equation}
\label{eq:local_poincare}
\int_{F_j}|u-\bar u_j|^2\le \frac{1}{\lambda_F}\int_{F_j}|\nabla u|^2.
\end{equation}
Summing over $j$: $\sum_j\int_{F_j}|u-\bar u_j|^2\le (1/\lambda_F)\int_U|\nabla u|^2$.

\smallskip
\emph{Step~2 (coupling adjacent block means to gradient).}
For blocks $F_j,F_k$ glued along a shared face $\Sigma_{jk}$, let
$\bar u_{\Sigma}:=|\Sigma_{jk}|^{-1}\int_{\Sigma_{jk}}u dA$ be the face average.
The trace inequality on the fixed block~$F$ gives
$|\bar u_j-\bar u_\Sigma|\le C_{\mathrm{adj}}\|\nabla u\|_{L^2(F_j)}$
for a constant $C_{\mathrm{adj}}$ depending only on~$F$ (via trace + Cauchy-Schwarz + Poincare), and similarly on $F_k$.
By the triangle inequality:
\begin{equation}
\label{eq:adj_means}
(\bar u_j-\bar u_k)^2\le 4C_{\mathrm{adj}}^2\Bigl(\int_{F_j}|\nabla u|^2+\int_{F_k}|\nabla u|^2\Bigr).
\end{equation}

\smallskip
\emph{Step~3 (discrete Poincare from Cheeger constant).}
The gluing graph $\Gamma$ has $h(\Gamma)\ge h_0>0$ by hypothesis and uniformly bounded degree $d_{\max}\le 2D$. By Mohar's discrete Cheeger inequalities \cite{Mohar1989} for the combinatorial Laplacian, the isoperimetric number $i(\Gamma)$ and the first nonzero eigenvalue $\lambda_1(\Gamma)$ satisfy $i(\Gamma)\le\sqrt{\lambda_1(2d_{\max}-\lambda_1)}$, which gives $\lambda_1(\Gamma)\ge i(\Gamma)^2/(2d_{\max})$ (see also \cite[\S 4.4--4.5]{HLW2006} for the normalized-Laplacian version on regular graphs). Since $i(\Gamma)\ge h(\Gamma)\ge h_0$, we obtain $\lambda_1(\Gamma)\ge h_0^2/(4D)=:c_{\mathrm{gap}}(h_0,D)>0$.
Since $\sum_j\bar u_j=0$, the discrete Poincare inequality gives
$\sum_j \bar u_j^2 \le (1/c_{\mathrm{gap}})\sum_{(j,k)\in E}(\bar u_j-\bar u_k)^2$.
Substituting \eqref{eq:adj_means} and noting each block appears in at most $2D$ adjacency terms:
\begin{equation}
\label{eq:discrete_poincare}
V_F\sum_j \bar u_j^2 \le \frac{8D V_F C_{\mathrm{adj}}^2}{c_{\mathrm{gap}}}\int_U|\nabla u|^2.
\end{equation}

\smallskip
\emph{Step~4 (combining).}
Since $\int_{F_j}(u-\bar u_j)=0$, the cross term vanishes in $\int_{F_j}|u|^2=\int_{F_j}|u-\bar u_j|^2+V_F\bar u_j^2$. Hence:
\[
\int_U|u|^2=\sum_j\int_{F_j}|u-\bar u_j|^2 + V_F\sum_j\bar u_j^2
\le\underbrace{\Bigl(\frac{1}{\lambda_F}+\frac{8D V_F C_{\mathrm{adj}}^2}{c_{\mathrm{gap}}}\Bigr)}_{=: C_P}\int_U|\nabla u|^2.
\]
All constants ($\lambda_F,D,V_F,C_{\mathrm{adj}},c_{\mathrm{gap}}$) depend only on $F$ and~$D$, not on~$m$.
\end{proof}

We write $\bar u_v$ for the mean of $u$ over the cluster $U_{v,m}$.

\smallskip
\noindent\textbf{(B) Quantitative trace estimate at ports.}
Fix a port face $\Sigma$ bounding a cluster $U_{v,m}$, and let $F_\Sigma$ be
the block of the cluster adjacent to that face. Since $F$ has fixed geometry,
the standard trace inequality on $F$ yields a constant $C_{\mathrm{tr}}>0$ such that
\[
\|u-\bar u_v\|_{L^2(\Sigma)}^2
\le C_{\mathrm{tr}}\Bigl(\|u-\bar u_v\|_{L^2(F_\Sigma)}^2+\|\nabla u\|_{L^2(F_\Sigma)}^2\Bigr).
\]
Since $F_\Sigma\subset U_{v,m}$, we have
$\|u-\bar u_v\|_{L^2(F_\Sigma)}^2 \le \|u-\bar u_v\|_{L^2(U_{v,m})}^2
\le C_P \int_{U_{v,m}} |\nabla u|^2$
by \eqref{eq:cluster_poincare}, and similarly
$\|\nabla u\|_{L^2(F_\Sigma)}^2 \le \int_{U_{v,m}} |\nabla u|^2$.
Therefore
\begin{equation}
\label{eq:trace_from_cluster}
\|u-\bar u_v\|_{L^2(\Sigma)}^2 \le C' \int_{U_{v,m}} |\nabla u|^2,
\end{equation}
with $C'=C_{\mathrm{tr}}(C_P+1)$, independent of $m$.

\smallskip
\noindent\textbf{(C) Replacing flux averages by cluster means.}
For a corridor $W_{e,m}$ of color $i=c(e)$ connecting clusters $v$ and $v'$,
Lemma~\ref{lem:corridor}(i) gives
\[
\int_{W_{e,m}} |\nabla u|^2
\ge \frac{C_i}{K_e} \bigl(\langle u\rangle_+ - \langle u\rangle_-\bigr)^2,
\]
where $\langle u\rangle_\pm$ are the normalized flux averages at the two ends.
We now replace these by the cluster means $\bar u_v, \bar u_{v'}$.

By definition,
$\langle u\rangle_+ = \frac{1}{C_i}\int_{\Sigma^+} u \partial_n\chi_i dA$,
so
\[
|\langle u\rangle_+ - \bar u_{v'}|
= \frac{1}{C_i}\left|\int_{\Sigma^+} (u-\bar u_{v'}) \partial_n\chi_i dA\right|
\le \frac{\|\partial_n\chi_i\|_{L^2(\Sigma^+)}}{C_i} \|u-\bar u_{v'}\|_{L^2(\Sigma^+)}.
\]
Since $\partial_n\chi_i$ is smooth on the fixed face geometry,
$\|\partial_n\chi_i\|_{L^2(\Sigma^+)}/C_i$ is a fixed constant $C_\chi$
depending only on the block~$F$. (Since $\chi_i$ is a fixed smooth function on the compact block~$F$, the quantity $\|\partial_n\chi_i\|_{L^2(\Sigma^\pm)}$ is a definite positive constant determined by the Riemannian geometry of~$F$; we simply define $C_\chi:=\|\partial_n\chi_i\|_{L^2(\Sigma^\pm)}/C_i$.)
Combined with \eqref{eq:trace_from_cluster}:
\begin{equation}
\label{eq:flux_vs_mean}
|\langle u\rangle_\pm - \bar u_w|
\le C_\chi\sqrt{C'}\left(\int_{U_{w,m}} |\nabla u|^2\right)^{1/2}
=: A\left(\int_{U_{w,m}} |\nabla u|^2\right)^{1/2},
\end{equation}
where $w$ is the cluster adjacent to that end and $A=C_\chi\sqrt{C'}$ is
independent of~$m$.

Now write $\langle u\rangle_+ - \langle u\rangle_-
= (\bar u_{v'} - \bar u_v) + \eta_e$, where
\[
|\eta_e|
\le A\left(\int_{U_{v,m}}|\nabla u|^2\right)^{1/2}
+ A\left(\int_{U_{v',m}}|\nabla u|^2\right)^{1/2}.
\]
Then
\begin{align}
\frac{C_i}{K_e}\bigl(\langle u\rangle_+ - \langle u\rangle_-\bigr)^2
&= \frac{C_i}{K_e}\bigl[(\bar u_{v'}-\bar u_v)^2 + 2(\bar u_{v'}-\bar u_v)\eta_e + \eta_e^2\bigr] \notag\\
&\ge \frac{C_i}{K_e}(\bar u_{v'}-\bar u_v)^2
- \frac{C_i}{K_e}\bigl[2|\bar u_{v'}-\bar u_v||\eta_e| + \eta_e^2\bigr].
\label{eq:corridor_with_correction}
\end{align}
We estimate the correction terms under the assumption that $u$ has
$\|u\|_{L^2}=1$ and lies in the low-energy window, i.e.,
$\int_{M_m}|\nabla u|^2 = \nu = O(m^{-4})$.

Since $\Vol(U_{v,m})=m^3 V_F$ and $\|u\|_{L^2}=1$, the cluster means satisfy
$\bar u_v = O(m^{-3/2})$, hence $|\bar u_{v'}-\bar u_v|=O(m^{-3/2})$.
The error term satisfies $|\eta_e|\le 2A\nu^{1/2}=O(m^{-2})$.
Therefore
\[
\frac{C_i}{K_e}\bigl[2|\bar u_{v'}-\bar u_v||\eta_e|+\eta_e^2\bigr]
= O(m^{-1})\bigl[O(m^{-3/2}\cdot m^{-2}) + O(m^{-4})\bigr]
= O(m^{-9/2}).
\]
Summing over the finitely many edges of $K_N$:
\begin{equation}
\label{eq:corridor_sum_lower}
\sum_e \int_{W_{e,m}} |\nabla u|^2
\ge
\sum_e \mathsf{w}_e(\bar u_{v'}-\bar u_v)^2 - O(m^{-9/2})
=
\langle \bar u, L_m \bar u\rangle_{\mathsf{V}} - O(m^{-9/2}).
\end{equation}

\smallskip
\noindent\textbf{(C${}'$) Corridor $L^2$ mass bound.}

\begin{lemma}[Corridor $L^2$-mass estimate]
\label{lem:corridor_mass}
For any $L^2$-normalized $u\in H^1(M_m)$ with $\int_{M_m}|\nabla u|^2=\nu$,
\begin{equation}
\label{eq:corridor_mass}
\sum_e \int_{W_{e,m}} |u|^2
\le
O(m^{-2}) + O(m^2) \nu.
\end{equation}
In particular, for low-energy functions with $\nu=O(m^{-4})$, the corridor $L^2$ mass is $O(m^{-2})$. If additionally all cluster means vanish ($\bar u_v=0$ for all $v$), then
\begin{equation}
\label{eq:corridor_mass_general}
\sum_e \int_{W_{e,m}} |u|^2
\le
O(m^2) \nu.
\end{equation}
\end{lemma}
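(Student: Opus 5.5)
We prove the estimate block by block along each corridor. The idea is a one-dimensional Poincar\'e/Friedrichs argument along the chain of $K_e(m)=O(m)$ blocks, anchored at the two port faces, where the trace estimate~\eqref{eq:trace_from_cluster} controls $u$ in terms of the cluster means.

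Fix an edge $e$ joining clusters $v,v'$, of color $i$, and label the blocks of $W_{e,m}$ as $F^{(1)},\dots,F^{(K)}$ with $K=K_e(m)$. Let $\bar u^{(k)}$ be the block means and $\bar u_\Sigma$ the face averages on the interfaces. On each block we use the splitting $\int_{F^{(k)}}|u|^2=\int_{F^{(k)}}|u-\bar u^{(k)}|^2+V_F(\bar u^{(k)})^2$. The oscillatory parts are controlled by the local Neumann Poincar\'e inequality \eqref{eq:local_poincare}; summed over the corridor they give at most $\lambda_F^{-1}\int_{W_{e,m}}|\nabla u|^2\le \lambda_F^{-1}\nu$.

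For the means, we use the adjacency estimate \eqref{eq:adj_means}, which applies equally to face averages. For consecutive blocks it gives $|\bar u^{(k+1)}-\bar u^{(k)}|\le 2C_{\mathrm{adj}}(\|\nabla u\|_{L^2(F^{(k)})}+\|\nabla u\|_{L^2(F^{(k+1)})})$. For the first block it gives $|\bar u^{(1)}-\bar u_{\Sigma^-}|\le C_{\mathrm{adj}}\|\nabla u\|_{L^2(F^{(1)})}$. Moreover, by Cauchy--Schwarz on the fixed face together with \eqref{eq:trace_from_cluster}, $|\bar u_{\Sigma^-}-\bar u_v|\le |\Sigma|^{-1/2}\sqrt{C'}\,(\int_{U_{v,m}}|\nabla u|^2)^{1/2}$. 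Telescoping from the port and applying Cauchy--Schwarz over at most $k\le K$ increments yields
\[
|\bar u^{(k)}-\bar u_v|^2\le C\,K\int_{W_{e,m}\cup U_{v,m}}|\nabla u|^2\le C\,m\,\nu .
\]
Hence $(\bar u^{(k)})^2\le 2\bar u_v^2+2Cm\nu$. Summing over the $K=O(m)$ blocks gives
\[
V_F\sum_k(\bar u^{(k)})^2\le O(m)\,\bar u_v^2+O(m^2)\,\nu.
\]

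Next we bound the cluster means using only $\|u\|_{L^2}=1$: Cauchy--Schwarz gives $\bar u_v^2\le \Vol(U_{v,m})^{-1}\int_{U_{v,m}}|u|^2\le (m^3V_F)^{-1}$. So $O(m)\bar u_v^2=O(m^{-2})$. Summing over the finitely many edges of $K_N$ and adding the oscillatory contribution $O(\nu)$, which is absorbed into $O(m^2)\nu$, gives \eqref{eq:corridor_mass}. If all cluster means vanish, the $\bar u_v^2$ term is absent and \eqref{eq:corridor_mass_general} follows directly.

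The main care point is that the constant in the telescoping bound stays independent of $m$. This holds because every ingredient (the trace constant, $C_{\mathrm{adj}}$, $\lambda_F$, $C'$) depends only on the fixed block $F$ and on the uniform cluster Poincar\'e constant of Lemma~\ref{lem:cluster_poincare}. The only $m$-dependence is the factor $K=O(m)$, which enters twice: once in the Cauchy--Schwarz along the chain and once in the number of blocks. This is the source of the $m^2$ in front of $\nu$. We also need that each block's gradient is counted in only boundedly many increments, which is clear for a linear chain. For a definite choice, we anchor each block at the nearer endpoint; anchoring at one end only also works.
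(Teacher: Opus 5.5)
Your proposal is correct and follows essentially the same route as the paper. Both arguments split each corridor block into its oscillation (local Neumann Poincar\'e) and its block mean, telescope the means along the chain from the port face anchored via \eqref{eq:trace_from_cluster}, apply Cauchy--Schwarz over the $K_e(m)=O(m)$ increments, and finish with $\bar u_v^2\le (m^3V_F)^{-1}$. The only cosmetic difference is that you fold the port and cluster terms into the $K$-weighted Cauchy--Schwarz. This gives $O(m^2)$ rather than the paper's $O(m)$ in front of $\int_{U_{v,m}}|\nabla u|^2$, which is harmless since both are absorbed into $O(m^2)\nu$.
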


\begin{proof}
We bound the $L^2$ mass of $u$ on corridors using the energy.
Consider a single corridor $W_{e,m}$ with $K_e=K_e(m)$ blocks
$F_1,\dots,F_{K_e}$, connecting clusters $v$ and $v'$.

On each block $F_j$, let $\bar u_j := \frac{1}{V_F}\int_{F_j} u$ be the
block mean. Consecutive block means satisfy
$|\bar u_{j+1}-\bar u_j|\le C_{\mathrm{adj}} \|\nabla u\|_{L^2(F_j\cup F_{j+1})}$
for a constant $C_{\mathrm{adj}}$ depending only on~$F$:
indeed, if $\Sigma_{j,j+1}=F_j\cap F_{j+1}$ is the shared face with average
$\bar u_{\Sigma_{j,j+1}}:=|\Sigma|^{-1}\int_{\Sigma_{j,j+1}}u dA$,
then by Poincare and the trace inequality on~$F$,
$|\bar u_j-\bar u_{\Sigma_{j,j+1}}|\le C_{\mathrm{tr}}'\|\nabla u\|_{L^2(F_j)}$,
and likewise on $F_{j+1}$; the triangle inequality gives
$C_{\mathrm{adj}}=2C_{\mathrm{tr}}'$.
The boundary block mean $\bar u_1$ is controlled by the adjacent cluster via the shared port face $\Sigma$. Note that $F_1\subset W_{e,m}$ and the cluster block $F_\Sigma\subset U_{v,m}$ share the interface~$\Sigma$ but are otherwise disjoint. We use the triangle inequality:
\[
|\bar u_1 - \bar u_v|
\le
\underbrace{|\bar u_1 - \bar u_\Sigma|}_{\text{corridor side}}
+
\underbrace{|\bar u_\Sigma - \bar u_v|}_{\text{cluster side}},
\]
where $\bar u_\Sigma := \frac{1}{|\Sigma|}\int_\Sigma u dA$ is the face average. For the corridor side, the standard trace/Poincare inequality on the fixed block $F_1$ gives
$|\bar u_1 - \bar u_\Sigma| \le C_{\mathrm{tr}}' \|\nabla u\|_{L^2(F_1)}$.
For the cluster side, $\bar u_\Sigma - \bar u_v = \frac{1}{|\Sigma|}\int_\Sigma(u-\bar u_v) dA$, so by Cauchy-Schwarz and \eqref{eq:trace_from_cluster}:
$|\bar u_\Sigma - \bar u_v| \le |\Sigma|^{-1/2}\sqrt{C'} \|\nabla u\|_{L^2(U_{v,m})}$.
Combining: $|\bar u_1 - \bar u_v| \le C_1\bigl(\|\nabla u\|_{L^2(F_1)} + \|\nabla u\|_{L^2(U_{v,m})}\bigr)$ for a constant $C_1$ depending only on $F$. Hence $|\bar u_1|\le |\bar u_v| + C_1\bigl(\|\nabla u\|_{L^2(W_{e,m})} + \|\nabla u\|_{L^2(U_{v,m})}\bigr)$.
By Cauchy-Schwarz applied to the telescoping sum (noting that the overlapping domains $F_k\cup F_{k+1}$ count each interior block at most twice, contributing a factor of $\sqrt{2}$ absorbed into $C_{\mathrm{adj}}$),
\[
|\bar u_j| \le |\bar u_1| + C_{\mathrm{adj}}\sqrt{j} \|\nabla u\|_{L^2(W_{e,m})},
\]
so $\bar u_j^2 \le 2\bar u_1^2 + 2C_{\mathrm{adj}}^2 j \|\nabla u\|_{L^2(W_{e,m})}^2$.
Summing over blocks and using the single-block Poincare inequality
$\int_{F_j}|u-\bar u_j|^2\le C_P^{(1)}\int_{F_j}|\nabla u|^2$:
\begin{align*}
\int_{W_{e,m}} |u|^2
&= \sum_{j=1}^{K_e}\Bigl[V_F\bar u_j^2 + \int_{F_j}|u-\bar u_j|^2\Bigr]\\
&\le 2K_e V_F\bar u_1^2 + 2C_{\mathrm{adj}}^2 K_e^2 V_F\|\nabla u\|_{L^2(W_e)}^2
+ C_P^{(1)}\int_{W_e}|\nabla u|^2.
\end{align*}
For general $u$, the cluster mean satisfies $\bar u_v^2\le(\Vol U_{v,m})^{-1}\int_{U_v}|u|^2= O(m^{-3})$ (using only $\|u\|_{L^2}=1$ and $\Vol(U_{v,m})=m^3V_F$). Hence
\[
\bar u_1^2\le 2\bar u_v^2+2C_1^2\bigl(\|\nabla u\|_{L^2(F_1)}^2+\|\nabla u\|_{L^2(U_v)}^2\bigr)
= O(m^{-3})+O\bigl(\|\nabla u\|_{L^2(F_1)}^2+\|\nabla u\|_{L^2(U_v)}^2\bigr).
\]
Since $K_e=O(m)$, the first displayed bound gives
\[
\int_{W_{e,m}} |u|^2
\le
O(m^{-2}) + O(m)\|\nabla u\|_{L^2(U_v)}^2
+ O(m^2)\int_{W_e}|\nabla u|^2.
\]
(Here $v$ is the cluster at the left end of the corridor; a symmetric bound holds with $U_{v'}$ in place of $U_v$.)
Summing over all corridors: for each corridor we use the bound at one chosen endpoint, and each cluster $U_v$ is an endpoint of at most $N-1$ corridors (one per macroscopic edge of $K_N$ at~$v$), so the $O(m)$-terms sum to $O(m)\nu$, which is absorbed by $O(m^2)\nu$. This establishes~\eqref{eq:corridor_mass}.
For low-energy functions with $\int|\nabla u|^2=\nu=O(m^{-4})$, both terms are
$O(m^{-2})$, confirming that corridor $L^2$ mass is negligible.

More generally, \eqref{eq:corridor_mass} shows that for \emph{any}
$L^2$-normalized $u$ with all cluster means equal to zero,
\eqref{eq:corridor_mass_general} holds,
since the $O(m^{-2})$ constant term is absorbed when $\bar u_v=0$ forces
$\bar u_1^2\le C_1^2(\|\nabla u\|_{L^2(U_v)}+\|\nabla u\|_{L^2(W_e)})^2$ (without the $|\bar u_v|$ term).
\end{proof}

\smallskip
\noindent\textbf{(D) Upper bound via prolongation.}
Given a discrete vector $x=(x_v)_{v\in V(K_N)}$, define $u_x\in H^1(M_m)$ by:
\begin{itemize}
\item $u_x\equiv x_v$ on each cluster $U_{v,m}$;
\item on each corridor $W_{e,m}$ connecting $v$ to $v'$ of color $i=c(e)$, let
$u_x$ be the \emph{energy minimizer} with constant boundary data $u_x\equiv x_v$
on the entrance face and $u_x\equiv x_{v'}$ on the exit face.
\end{itemize}
By Lemma~\ref{lem:corridor}(ii), the energy of this minimizer satisfies
$\int_{W_e}|\nabla u_x|^2 \le [C_i/K_e + C_{\mathrm{bl}}/K_e^2](x_v-x_{v'})^2$.
This produces a globally $H^1$ function: trace matching across each port face ensures continuity, hence $u_x\in H^1(M_m)$ and the Rayleigh quotient is well-defined.
Since $u_x$ is constant on clusters, $\nabla u_x=0$ there, so
\[
\int_{M_m}|\nabla u_x|^2
= \sum_e \int_{W_{e,m}}|\nabla u_x|^2
\le \sum_e \left[\frac{C_i}{K_e}+\frac{C_{\mathrm{bl}}}{K_e^2}\right](x_v-x_{v'})^2
= \sum_e \mathsf{w}_e(x_v-x_{v'})^2 (1+O(m^{-1})).
\]
For the $L^2$ norm, the cluster contribution is $\sum_v m^3 V_F  x_v^2$.
On each corridor, the minimizer $u_x$ satisfies $\min(x_v,x_{v'})\le u_x\le\max(x_v,x_{v'})$ by the maximum principle (it is harmonic on each block interior with boundary data bounded by the endpoint values), hence
$\int_{W_e}|u_x|^2 \le K_e V_F\cdot\max(x_v^2,x_{v'}^2)$.
With the discrete normalization $\sum_v \mathsf{V}_v x_v^2=1$
(so $x_v=O(m^{-3/2})$), corridor volume $O(m)$ gives
\[
\sum_e \int_{W_e} |u_x|^2 = O(m\cdot m^{-3}) = O(m^{-2}).
\]
Therefore
\[
\frac{\int|\nabla u_x|^2}{\int|u_x|^2}
= \frac{\sum_e \mathsf{w}_e(x_v-x_{v'})^2 (1+O(m^{-1}))}
       {\sum_v \mathsf{V}_v x_v^2 (1+O(m^{-2}))}
= R_{\mathrm{disc}}(x) (1+O(m^{-1})),
\]
where $R_{\mathrm{disc}}$ denotes the Rayleigh quotient for $L_m$.
Since the Dirichlet problem on each corridor is linear in its boundary data, the prolongation map $x\mapsto u_x$ is linear; it is injective because $u_x\equiv x_v$ on each cluster. The image of any $(k+1)$-dimensional subspace of $\mathbb{R}^N$ is therefore a $(k+1)$-dimensional subspace of $H^1(M_m)$.
By the min-max characterization, choosing $x$ to run over the first $(k+1)$
discrete eigenspaces gives
\begin{equation}
\label{eq:upper_bound}
\nu_k(M_m)\le \lambda_k(L_m) (1+O(m^{-1})).
\end{equation}

\smallskip
\noindent\textbf{(E) Lower bound via cluster averaging.}
We prove $\nu_k(M_m)\ge \lambda_k(L_m) (1+o(1))$ for each fixed $k\le N-1$
using a min-max/dimension argument. (The argument is index-independent; the case $k\le n$ is the one needed for the target eigenvalues, but the same bound for $n+1\le k\le N-1$ is used in Theorem~\ref{thm:reduction} for the padding eigenvalues.)

Let $V\subset H^1(M_m)$ be any $(k+1)$-dimensional subspace. Define the
\emph{cluster averaging map} $\mathcal{A}:H^1(M_m)\to \mathbb{R}^N$ by
$\mathcal{A}(u)=(\bar u_v)_{v\in V(K_N)}$.

\emph{Case 1:} $\dim\mathcal{A}(V)\ge k+1$.
Then $\mathcal{A}(V)$ is a $(k+1)$-dimensional subspace of
$\mathbb{R}^N$.
By the discrete min-max characterization of $\lambda_k(L_m)$, there exists
$\bar u\in\mathcal{A}(V)\setminus\{0\}$ with
$R_{\mathrm{disc}}(\bar u)\ge \lambda_k(L_m)$.
Pick any nonzero $u_0\in V$ with $\mathcal{A}(u_0)=\bar u$, and set $u:=u_0/\|u_0\|_{L^2}$. Then $u\in V$ (since $V$ is a subspace), $\|u\|_{L^2}=1$, and $R(u)=R(u_0)$ by $0$-homogeneity of the Rayleigh quotient. Henceforth we write $\bar u_v:=\mathcal{A}(u)_v$ for the cluster means of the normalized~$u$; since $\mathcal{A}(u)=\mathcal{A}(u_0)/\|u_0\|_{L^2}$, $0$-homogeneity gives $R_{\mathrm{disc}}(\bar u)=R_{\mathrm{disc}}(\mathcal{A}(u_0))\ge\lambda_k(L_m)$.

If $R(u)\ge 2\lambda_k(L_m)$, then $\max_{w\in V,\|w\|=1}R(w)\ge 2\lambda_k(L_m) \ge \lambda_k(L_m)(1-o(1))$ and the desired lower bound holds trivially. Otherwise $R(u) \le 2\lambda_k(L_m) = O(m^{-4})$, which justifies the low-energy estimates in parts (C) and~(C$'$).

We now compare $R(u)$ with $R_{\mathrm{disc}}(\bar u)$.
From \eqref{eq:corridor_sum_lower},
\[
R(u) = \int_{M_m}|\nabla u|^2
\ge \sum_e \int_{W_e}|\nabla u|^2
\ge \langle \bar u, L_m \bar u\rangle_{\mathsf{V}} - O(m^{-9/2}).
\]
Since $\langle \bar u, L_m \bar u\rangle_{\mathsf{V}} = R_{\mathrm{disc}}(\bar u)\cdot\sum_v \mathsf{V}_v\bar u_v^2$, the $L^2$ decomposition gives:
\begin{align*}
1 = \int_{M_m}|u|^2
&= \sum_v\left[\Vol(U_v)\bar u_v^2 + \int_{U_v}|u-\bar u_v|^2\right]
+ \sum_e \int_{W_e}|u|^2 \\
&= \sum_v \mathsf{V}_v\bar u_v^2
+ \underbrace{\sum_v\int_{U_v}|u-\bar u_v|^2}_{\le  C_P\nu \text{by \eqref{eq:cluster_poincare}}}
+ \underbrace{\sum_e\int_{W_e}|u|^2}_{O(m^{-2}) \text{by \eqref{eq:corridor_mass}}},
\end{align*}
hence
\begin{equation}
\label{eq:L2_decomp}
\sum_v \mathsf{V}_v\bar u_v^2 = 1 - O(m^{-2}).
\end{equation}
Therefore
\[
R(u)
\ge
R_{\mathrm{disc}}(\bar u)\bigl(\sum_v \mathsf{V}_v\bar u_v^2\bigr) - O(m^{-9/2})
=
R_{\mathrm{disc}}(\bar u) (1-O(m^{-2})) - O(m^{-9/2})
\ge
\lambda_k(L_m) (1-o(1)),
\]
where in the last step we used $R_{\mathrm{disc}}(\bar u)\ge\lambda_k(L_m)$ together with
$\lambda_k(L_m)\asymp m^{-4}$ (by Lemma~\ref{lem:scaling}, since $\mu_k>0$ for $k\ge 1$),
so that $O(m^{-9/2})=o(m^{-4})=o(\lambda_k(L_m))$.

\emph{Case 2:} $\dim\mathcal{A}(V)\le k$.
By rank-nullity (since $\dim V = k+1 > k \ge \dim\mathcal{A}(V)$), $\ker\mathcal{A}\cap V$ has dimension $\ge 1$. Choose $u\in\ker\mathcal{A}\cap V$
with $\|u\|=1$. Since $\bar u_v=0$ for all $v$, Proposition~\ref{prop:parasitic}
below gives $R(u)\ge c_0/m^2$. Since $\lambda_k(L_m)=O(m^{-4})$, we have
$R(u)\gg \lambda_k(L_m)$ for large $m$.

In both cases, $\max_{u\in V,\|u\|=1}R(u)\ge \lambda_k(L_m)(1-o(1))$.
Since $V$ was an arbitrary $(k+1)$-dimensional subspace, the min-max
characterization gives
\begin{equation}
\label{eq:lower_bound}
\nu_k(M_m)\ge \lambda_k(L_m) (1-o(1)).
\end{equation}

\begin{remark}
\label{rem:error_rate}
The error rate $O(m^{-1})$ in the upper bound~\eqref{eq:upper_bound} is sharper than the $O(m^{-1/2})$ implicit in the lower bound~\eqref{eq:lower_bound}. The bottleneck in the lower bound is the crude global bound $\int_{U_{v,m}}|\nabla u|^2\le \nu=O(m^{-4})$ used to estimate $|\eta_e|=O(m^{-2})$, combined with $|\bar u_{v'}-\bar u_v|=O(m^{-3/2})$ (from $L^2$-normalization and cluster volume $m^3 V_F$); the resulting cross-term is $O(m^{-9/2})$, giving relative error $O(m^{-1/2})$ after dividing by $\lambda_k(L_m)\asymp m^{-4}$ (Lemma~\ref{lem:scaling}). We note that the cluster-mean scaling $\bar u_v=\Theta(m^{-3/2})$ is sharp and \emph{cannot} be improved: the eigenvector components have $\Theta(1)$ relative variation, so $|\bar u_{v'}-\bar u_v|=\Theta(m^{-3/2})$.

The actual looseness lies in the estimate of cluster-internal energy: for an eigenfunction at eigenvalue $\nu_k=O(m^{-4})$, almost all gradient energy is dissipated in the $O(m)$-length corridors rather than in the $O(1)$-resistance expander clusters, suggesting $\int_{U_{v,m}}|\nabla u|^2=O(m^{-5})$ rather than $O(m^{-4})$. Substituting this into the $\eta_e$ bound would improve the relative error to $O(m^{-1})$, matching the upper bound. Making this rigorous would require a quantitative energy-partition lemma that we do not pursue here.

We note that the $O(m^{-1})$ upper-bound rate is likely sharp: the floor function in $K_e(m)=\lfloor m C_{c(e)}/w_e^*\rfloor$ introduces an $O(m^{-1})$ discretization error that propagates to the eigenvalue approximation.
\end{remark}

We now state the parasitic eigenvalue bound, which is needed in both Case~2 of part~(E) and in the proof of the main spectral reduction theorem.

\begin{proposition}[Parasitic eigenvalue bound]
\label{prop:parasitic}
There exists $c_0=c_0(F,D,w^*)>0$ such that for all sufficiently large~$m$,
every $L^2$-normalized function $u$ on $M_m$ with $\bar u_v=0$ for all
$v\in V(K_N)$ satisfies
\[
\int_{M_m}|\nabla u|^2 \ge \frac{c_0}{m^2}.
\]
In particular, $\nu_N(M_m)\ge c_0/m^2$.
\end{proposition}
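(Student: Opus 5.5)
The plan is to split $\int_{M_m}|u|^2=1$ into a cluster part and a corridor part and bound each by the energy, using only that all cluster means vanish. Write $\nu:=\int_{M_m}|\nabla u|^2$. On clusters, $\bar u_v=0$ together with Lemma~\ref{lem:cluster_poincare} gives
\[
\sum_v\int_{U_{v,m}}|u|^2=\sum_v\int_{U_{v,m}}|u-\bar u_v|^2\le C_P\,\nu .
\]
On corridors, the vanishing-mean version \eqref{eq:corridor_mass_general} of Lemma~\ref{lem:corridor_mass} gives $\sum_e\int_{W_{e,m}}|u|^2\le C\,m^2\nu$. The constant $C$ depends only on $F$ (through $C_{\mathrm{adj}}$, $C_1$, $C_P^{(1)}$), on $N$, and on $\max_e K_e(m)/m\le \max_e C_{c(e)}/w_e^*$. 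Hence it depends only on $(F,D,w^*)$.

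Adding the two bounds gives $1\le (C_P+Cm^2)\,\nu$. So for $m\ge1$ we get $\nu\ge 1/((C_P+C)m^2)$, and we take $c_0:=1/(C_P+C)$. Before using the corridor bound, I would recheck that it holds without any low-energy hypothesis. The point to verify is the telescoping estimate $|\bar u_j|\le|\bar u_1|+C_{\mathrm{adj}}\sqrt{j}\,\|\nabla u\|_{L^2(W_e)}$, combined with $\bar u_1^2\le 2C_1^2(\|\nabla u\|^2_{L^2(F_1)}+\|\nabla u\|^2_{L^2(U_v)})$ when $\bar u_v=0$. Summing over the $K_e=O(m)$ blocks then gives $O(m)\|\nabla u\|^2_{L^2(U_v)}+O(m^2)\|\nabla u\|^2_{L^2(W_e)}$. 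Each cluster borders at most $N-1$ corridors, so the total is $O(m^2)\nu$ with no additive constant. This is exactly where the vanishing of the cluster means is used: it kills the $O(m^{-2})$ term that would otherwise make the bound vacuous.

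For the consequence $\nu_N(M_m)\ge c_0/m^2$, I would use min-max. Take any $(N+1)$-dimensional subspace $V\subset H^1(M_m)$. The averaging map $\mathcal{A}:V\to\mathbb{R}^N$ has a nontrivial kernel by rank–nullity. A normalized $u$ in that kernel has Rayleigh quotient at least $c_0/m^2$ by the first part. So $\max_{V}R\ge c_0/m^2$ for every such $V$, and therefore $\nu_N\ge c_0/m^2$.

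The main obstacle is making sure that the constants in the corridor estimate are genuinely uniform. The constant $C_1$ linking the first corridor block to the cluster uses the trace bound \eqref{eq:trace_from_cluster}, which needs $C_P$ uniform in $m$; this holds by the Cheeger bound on $\Gamma_v^\circ$. The sharpness of the $m^2$ scaling, a corridor of length $\sim m$ acting like a one-dimensional interval with Neumann-type gap $\sim m^{-2}$, is not needed, since only the lower bound is required. I would also note that the ``sufficiently large $m$'' hypothesis enters only through the existence of the construction itself: the expander wiring and $K_e(m)\ge1$.
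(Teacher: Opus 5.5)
Your proposal is correct and follows the paper's proof essentially verbatim. The cluster Poincar\'e inequality with $\bar u_v=0$ and the zero-mean corridor bound \eqref{eq:corridor_mass_general} give $1\le (C_P+C m^2)\nu$, and min--max via the nontrivial kernel of $\mathcal{A}$ on an $(N+1)$-dimensional subspace gives $\nu_N(M_m)\ge c_0/m^2$. The only cosmetic difference is that your $c_0=1/(C_P+C)$ works for all $m\ge 1$, whereas the paper takes $c_0=1/(2C_{\mathrm{corr}})$ once $C_{\mathrm{corr}}m^2\ge C_P$.
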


\begin{proof}
Let $u\in H^1(M_m)$ with $\|u\|_{L^2}=1$ and $\bar u_v=0$ for all $v$.
Set $\nu:=\int_{M_m}|\nabla u|^2$.

By the cluster Poincare inequality \eqref{eq:cluster_poincare} with
$\bar u_v=0$:
\begin{equation}
\label{eq:cluster_mass_parasitic}
\sum_v \int_{U_{v,m}} |u|^2
= \sum_v \int_{U_{v,m}}|u-\bar u_v|^2
\le C_P \nu.
\end{equation}
By the corridor mass bound \eqref{eq:corridor_mass_general}
(valid when all cluster means vanish):
\begin{equation}
\label{eq:corridor_mass_parasitic}
\sum_e \int_{W_{e,m}}|u|^2
\le C_{\mathrm{corr}} m^2 \nu,
\end{equation}
for a constant $C_{\mathrm{corr}}$ depending only on $F$ and $D$ (and, through $K_e(m)=O(m)$, on the prescribed edge weights $w^*$; since $w^*$ is fixed before the limit $m\to\infty$, this dependence is harmless).
Since $\|u\|_{L^2}^2=1$, combining gives
\[
1 \le C_P \nu + C_{\mathrm{corr}} m^2 \nu
= (C_P + C_{\mathrm{corr}} m^2) \nu.
\]
Hence $\nu\ge 1/(C_P + C_{\mathrm{corr}} m^2)\ge c_0/m^2$ for
$c_0=1/(2C_{\mathrm{corr}})$ and all $m$ large enough that
$C_{\mathrm{corr}} m^2\ge C_P$.

For the eigenvalue statement: any $(N+1)$-dimensional subspace
$V\subset H^1(M_m)$ satisfies $\dim(\ker\mathcal{A}\cap V)\ge 1$
(since $\mathcal{A}:H^1\to\mathbb{R}^N$), so $V$ contains an
$L^2$-normalized function with all cluster means zero, whose Rayleigh
quotient is $\ge c_0/m^2$. By the min-max characterization,
$\nu_N(M_m)\ge c_0/m^2$.
\end{proof}

We can now state the main spectral reduction result.

\begin{theorem}[Spectral reduction to $K_N$ in the heavy-vertex regime]
\label{thm:reduction}
Fix the target list $0=\lambda_0^*<\cdots<\lambda_n^*$ and the associated construction above. Then $\nu_0(M_m)=\lambda_0(L_m)=0$, and for each $k\in\{1,\dots,N-1\}$,
\[
\nu_k(M_m) = \lambda_k(L_m) (1+o(1))\qquad (m\to\infty).
\]
In particular, $m^4\nu_k(M_m)\to \lambda_k^*$ for $k\le n$, and $m^4\nu_k(M_m)\to\mu_k/V_F>\lambda_n^*+1$ for $n+1\le k\le N-1$ (where $\mu_{n+1},\dots,\mu_{N-1}$ are the padding eigenvalues chosen strictly above $(\lambda_n^*+1) V_F$ in~\S4.3).
Moreover, $\nu_N(M_m)\ge c_0/m^2$ for a constant $c_0>0$ independent of~$m$ (Proposition~\ref{prop:parasitic}), so that $m^4\nu_N(M_m)\to\infty$.
\end{theorem}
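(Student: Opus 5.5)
The plan is to assemble Theorem~\ref{thm:reduction} from the ingredients (A)--(E) and Proposition~\ref{prop:parasitic}, which are already in place. First, $\nu_0(M_m)=0$ because $M_m$ is closed and connected, so constants are the unique ground states. Likewise $\lambda_0(L_m)=0$ because $K_N$ is connected and all weights are positive.

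For $1\le k\le N-1$, I would sandwich $\nu_k(M_m)$. The upper bound \eqref{eq:upper_bound} comes from the linear injective prolongation $x\mapsto u_x$ in (D). The lower bound \eqref{eq:lower_bound} comes from the cluster-averaging dichotomy in (E). In Case~1 of (E), the corridor lower bound \eqref{eq:corridor_sum_lower}, the $L^2$ decomposition \eqref{eq:L2_decomp} and the corridor mass bound of Lemma~\ref{lem:corridor_mass} give $R(u)\ge\lambda_k(L_m)(1-O(m^{-1/2}))$. In Case~2, Proposition~\ref{prop:parasitic} gives $R(u)\ge c_0m^{-2}\gg\lambda_k(L_m)$.

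The one point I would check carefully is that the low-energy hypothesis $\nu=O(m^{-4})$ used in (C) and (C$'$) is legitimately available. This is handled by the reduction in Case~1: either $R(u)\ge 2\lambda_k(L_m)$, which is already enough, or $R(u)\le 2\lambda_k(L_m)=O(m^{-4})$. The scaling $\lambda_k(L_m)\asymp m^{-4}$ for $k\ge1$ comes from Lemma~\ref{lem:scaling}, which uses $\mu_k>0$. Combining the two bounds gives $\nu_k(M_m)=\lambda_k(L_m)(1+o(1))$.

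Multiplying by $m^4$ and invoking Lemma~\ref{lem:scaling} then gives the rest. For $k\le n$ we get $m^4\nu_k\to\mu_k/V_F=\lambda_k^*$. For $n+1\le k\le N-1$ we get $m^4\nu_k\to\mu_k/V_F>\lambda_n^*+1$, by the padding choice. The last claim, $\nu_N(M_m)\ge c_0/m^2$, is the eigenvalue statement of Proposition~\ref{prop:parasitic}, and it gives $m^4\nu_N\ge c_0m^2\to\infty$.

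The main obstacle is not the assembly but the uniformity of the error terms in (E). The $O(m^{-9/2})$ cross-term must be $o(\lambda_k(L_m))$ uniformly over the finitely many $k$. This holds because $\min_{1\le k\le N-1}\mu_k>0$ is fixed before $m\to\infty$. The constants $C_P,C',A,C_{\mathrm{corr}}$ must also be independent of $m$. That in turn depends on the uniform Cheeger bound $h_0/(2D)$ for the port-deleted expanders $\Gamma_v^\circ$, which Lemma~\ref{lem:cluster_poincare} requires. So I would recheck that each constant traces back only to $F$, $D$ and $w^*$.
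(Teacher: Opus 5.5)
Your proposal is correct and is essentially the paper's own proof. It takes the upper bound from the linear prolongation in (D) and the lower bound from the cluster-averaging dichotomy in (E), using the low-energy reduction in Case~1 and Proposition~\ref{prop:parasitic} in Case~2, then concludes with Lemma~\ref{lem:scaling} and the eigenvalue part of Proposition~\ref{prop:parasitic}; your added remarks on $\nu_0=\lambda_0=0$ and on the $m$-independence of the constants, uniformly over the finitely many $k\le N-1$, only make explicit what the paper leaves implicit.
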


\begin{proof}
The upper bound~\eqref{eq:upper_bound} gives $\nu_k(M_m)\le\lambda_k(L_m)(1+O(m^{-1}))$ for all $k\le N-1$. The lower bound~\eqref{eq:lower_bound} applies equally to all $k\le N-1$: the min-max/dimension argument in part~(E) is index-independent, and the parasitic bound in Case~2 gives $R(u)\ge c_0/m^2\gg\lambda_k(L_m)$ for any fixed~$k$. Combining with Lemma~\ref{lem:scaling} yields the stated asymptotics for all $1\le k\le N-1$.

The bound $\nu_N(M_m)\ge c_0/m^2$ is Proposition~\ref{prop:parasitic}.
\end{proof}

\begin{remark}
The parasitic bound $\nu_N\ge c_0/m^2$ implies
$m^4\nu_N\ge c_0 m^2\to\infty$, whereas the target eigenvalues satisfy
$m^4\nu_k\to\lambda_k^*$ for $k\le n$.
Since $N\ge n+1$, the eigenvalues $\nu_0,\dots,\nu_n$ are eventually separated
from $\nu_N,\nu_{N+1},\dots$ by a gap that diverges after rescaling.
This ensures that the approximation in Theorem~\ref{thm:main} controls
exactly the first $n+1$ eigenvalues, with no unwanted eigenvalues intruding.

The extra discrete eigenvalues $\mu_{n+1},\dots,\mu_{N-1}$ (chosen strictly above
$(\lambda_n^*+1) V_F$ in \S4.3) correspond to continuous eigenvalues of $M_m$
in the intermediate range above the target window and below the parasitic gap.
Their precise location does not affect the first $n+1$ eigenvalues.
\end{remark}

\section{Proof of the Main Theorem}
\label{sec:proof}

\begin{proof}[Proof of Theorem \ref{thm:main}]
Let $0=\lambda^*_0<\lambda^*_1<\cdots<\lambda^*_n$ and $\varepsilon>0$. Without loss of generality, assume $\varepsilon<1$ (it suffices to prove the theorem for $\varepsilon':=\min(\varepsilon,1)$, since an $\varepsilon'$-approximation is automatically an $\varepsilon$-approximation).

\smallskip
\noindent\textbf{Case $d=2$.}
Choose $N=\max(n+1,4)$. If $N>n+1$, extend the target list to length $N$ by appending strictly increasing values $\lambda^*_{n+1}<\cdots<\lambda^*_{N-1}$ with $\lambda^*_{n+1}>\lambda^*_n+1$; this ensures the padded eigenvalues lie well above the target window $[0,\lambda_n^*+\varepsilon]$ for any $\varepsilon<1$.
Apply Lemma~\ref{lem:cdv_complete} (with Remark~\ref{rem:constant_measure}) to $K_N$ with vertex areas $V_v=2\pi(N-3)$ to obtain weights $w_e>0$ realizing the target discrete spectrum $(\lambda^*_1,\dots,\lambda^*_{N-1})$. (Concretely: the $\nu\equiv 1$ lemma is applied to the list $(V_v\lambda^*_1,\dots,V_v\lambda^*_{N-1})$; the resulting Laplacian with constant measure $V_v$ then has eigenvalues $\lambda^*_k$ by the scaling in Remark~\ref{rem:constant_measure}.)
Construct the Fenchel-Nielsen pinching family $\Sigma_\delta$ as in Section~\ref{sec:surfaces}, with collar lengths $\ell_e(\delta)=\pi\delta w_e$ and all remaining Fenchel-Nielsen coordinates fixed.
By Theorem~\ref{thm:burger}, $\lambda_k(\Sigma_\delta)=\delta\lambda_k^*(1+o(1))$ for $1\le k\le N-1$ and all higher eigenvalues are $\ge C_0$.
Rescale the metric by $g_\delta:=\delta g_{\Sigma_\delta}$. Then $\kappa_\delta=-1/\delta\to -\infty$ and $\lambda_k(\Sigma_\delta,g_\delta)=\lambda_k^*+o(1)$ for $k\le n$. For the padded eigenvalues $n+1\le k\le N-1$, the same rescaling gives $\lambda_k(\Sigma_\delta,g_\delta)\to\lambda_k^*>\lambda_n^*+1$, so these also lie above the target window. For $k\ge N$, the rescaled eigenvalues satisfy $\lambda_k(\Sigma_\delta,g_\delta)=\delta^{-1}\lambda_k(\Sigma_\delta)\ge C_0/\delta\to\infty$, so no parasitic eigenvalues intrude. Choosing $\delta$ small yields the $\varepsilon$-approximation.

\smallskip
\noindent\textbf{Case $d\ge 3$.}
Fix $N\ge \max(n+1,5)$ odd and $D=(N-1)/2$ and build the base manifold $B$ and block $F$ as in Section~4.1. If $N>n+1$, the discrete spectrum of $K_N$ has $N-1$ non-zero eigenvalues; we prescribe the first $n$ to match $\lambda_k^* V_F$ and choose the remaining $N-1-n$ to be a strictly increasing sequence strictly above $(\lambda_n^*+1) V_F$, ensuring these ``padding'' eigenvalues lie well above the target window $[0,\lambda_n^*+\varepsilon]$ for any $\varepsilon<1$. (In particular, the discrete weights $w_e^*$ are independent of~$\varepsilon$ and remain fixed during the limit $m\to\infty$.)
Use Lemma~\ref{lem:cdv_complete} to choose weights $w_e^*$ on $K_N$ realizing the full discrete spectrum $(\mu_1,\dots,\mu_{N-1})$, where $\mu_k=\lambda_k^*V_F$ for $k\le n$ and the remaining $\mu_k$ are the padding values.
For large $m$, build the hyperbolic cover $M_m\to B$ (curvature $-1$) with cluster size $m^3$ and corridor lengths $K_e(m)$.
By Theorem~\ref{thm:reduction} and Lemma~\ref{lem:scaling}, $m^4\nu_k(M_m)\to \lambda_k^*$ for each fixed $k\le n$, and $m^4\nu_N(M_m)\to\infty$ (by Theorem~\ref{thm:reduction}), while $m^4\nu_k(M_m)\to\mu_k/V_F>\lambda_n^*+1$ for $n+1\le k\le N-1$.
Now rescale the metric by $g_m:=m^{-4}g_{M_m}$, so $\kappa_m=-m^4\to -\infty$ and $\lambda_k(M_m,g_m)=m^4\nu_k(M_m)\to\lambda_k^*$. Choose $m$ large so that $\lvert \lambda_k(M_m,g_m)-\lambda_k^*\rvert<\varepsilon$ for $k\le n$.

\smallskip
\noindent\textbf{Simplicity.}
For $k<n$: since $\lambda_k^*<\lambda_{k+1}^*$ and $\lambda_k(M_m,g_m)\to\lambda_k^*$ for each $k$, the strict separation $\lambda_{k+1}(M_m,g_m)-\lambda_k(M_m,g_m)>0$ holds for all sufficiently large~$m$. We choose $m$ large enough that this holds simultaneously for all $k<n$ (in addition to the approximation $|\lambda_k-\lambda_k^*|<\varepsilon$ for $k\le n$).
For $k=n$, we must show $\lambda_{n+1}$ is eventually separated from~$\lambda_n$. By the approximation step, $\lambda_n<\lambda_n^*+\varepsilon$. It remains to show $\lambda_{n+1}>\lambda_n^*+\varepsilon$.

If $N>n+1$: in $d=2$, the first padded eigenvalue satisfies
$\lambda_{n+1}(\Sigma_\delta,g_\delta)\to\lambda_{n+1}^*>\lambda_n^*+1$;
in $d\ge 3$, $\lambda_{n+1}(M_m,g_m)=m^4\nu_{n+1}(M_m)\to\mu_{n+1}/V_F>\lambda_n^*+1$ (by Theorem~\ref{thm:reduction}). In either case, $\lambda_{n+1}>\lambda_n^*+\varepsilon$ eventually (since $\varepsilon<1$).

If $N=n+1$: in $d=2$, $\lambda_{n+1}(\Sigma_\delta,g_\delta)\ge C_0/\delta\to\infty$; in $d\ge 3$, $\lambda_{n+1}(M_m,g_m)=m^4\nu_N(M_m)\ge c_0 m^2\to\infty$ (by the parasitic bound, Theorem~\ref{thm:reduction}).

Beyond the padding range (when $N>n+1$): in $d=2$, $\lambda_k(\Sigma_\delta,g_\delta)\ge C_0/\delta\to\infty$ for $k\ge N$; in $d\ge 3$, $m^4\nu_N(M_m)\to\infty$. Hence all $n+1$ eigenvalues in the window $[0,\lambda_n^*+\varepsilon]$ are simple.

\smallskip
\noindent\textbf{Topology.}
In $d=2$ the genus is $\gamma=1+\frac{N(N-3)}{2}$, depending only on $n$.
In $d\ge 3$ the covering degree is $|S|=Nm^3+O(m)$, which grows as $\varepsilon\to 0$ (see Remark~\ref{rem:covering_degree} below).

\smallskip
\noindent\textbf{Obstruction.}
The ``Moreover'' clause of Theorem~\ref{thm:main} (the universal bound $\lambda_1\le\Lambda_d|\kappa|$) is proved in Section~\ref{sec:obstruction}: Proposition~\ref{thm:yang_yau_bound} for $d=2$ and Proposition~\ref{prop:KM_Cheng} for $d\ge 3$.

\end{proof}

\begin{remark}[Covering degree and $\varepsilon$-dependence]
\label{rem:covering_degree}
In the $d\ge 3$ construction, the lower-bound error rate (Remark~\ref{rem:error_rate}) is $O(m^{-1/2})$ with the crude cluster-energy bound, so achieving $\varepsilon$-approximation is guaranteed once $m$ is of order $\varepsilon^{-2}$, giving a covering degree of $O(\varepsilon^{-6})$. (We expect $O(\varepsilon^{-3})$ to be achievable via the tighter cluster-energy analysis sketched in Remark~\ref{rem:error_rate}, but this would require a quantitative energy-partition lemma that we do not prove here.) In particular, the covering degree (and hence the unrescaled volume at curvature $-1$) of the approximating sequence grows polynomially in $\varepsilon^{-1}$; the polynomial depends on the discrete weights $w_e^*$ (determined by the target list through Lemma~\ref{lem:cdv_complete}).
\end{remark}

\section{Geometric Obstructions to Bounded Curvature}
\label{sec:obstruction}

We show that curvature divergence is forced when the target contains values above the universal bounds for $\kappa=-1$ hyperbolic manifolds.

\subsection{Surfaces (\texorpdfstring{$d=2$}{d=2})}

\begin{proposition}
\label{thm:yang_yau_bound}
Let $(M,g)$ be a closed orientable surface of genus $\gamma\ge 2$ with constant sectional curvature $\kappa<0$. Then
\[
\lambda_1(M,g)\le 6|\kappa|.
\]
\end{proposition}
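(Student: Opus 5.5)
The plan is to reduce to curvature $-1$ by scaling and then apply the Yang--Yau inequality. First, the normalization. If $g$ has constant curvature $\kappa<0$, then $g_0:=|\kappa|\,g$ has constant curvature $-1$. The Laplacian scales as $\Delta_{g_0}=|\kappa|^{-1}\Delta_g$, so
\[
\lambda_1(M,g)=|\kappa|\,\lambda_1(M,g_0).
\]
It therefore suffices to show $\lambda_1(M,g_0)\le 6$ for every closed orientable hyperbolic surface of genus $\gamma\ge 2$.

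Second, I would invoke the Yang--Yau inequality, in the form valid for every genus-$\gamma$ orientable surface. Writing $\lfloor x\rfloor$ for the integer part, it states
\[
\lambda_1(M,g_0)\,\operatorname{Area}(M,g_0)\le 8\pi\Bigl\lfloor\tfrac{\gamma+3}{2}\Bigr\rfloor.
\]
Recall the proof idea: take a branched conformal map $M\to S^2$ of degree at most $\lfloor(\gamma+3)/2\rfloor$, which exists by Brill--Noether. Compose it with a Möbius transformation (Hersch's trick) so that the coordinate functions have mean zero. Use these coordinates as test functions, and note that Dirichlet energy is conformally invariant, so the total energy is $8\pi$ times the degree. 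By Gauss--Bonnet, $\operatorname{Area}(M,g_0)=2\pi(2\gamma-2)=4\pi(\gamma-1)$. Hence
\[
\lambda_1(M,g_0)\le \frac{2\lfloor(\gamma+3)/2\rfloor}{\gamma-1}\le\frac{\gamma+3}{\gamma-1}.
\]

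Third, I would check the elementary inequality $(\gamma+3)/(\gamma-1)\le 6$. This is equivalent to $\gamma+3\le 6\gamma-6$, i.e.\ $\gamma\ge 9/5$, so it holds for all $\gamma\ge 2$. The worst case is $\gamma=2$, where $\lfloor 5/2\rfloor=2$ gives $\lambda_1\le 4$. A cleaner uniform bound than $6$ is thus available, but $6$ suffices for the statement; for $\gamma=3$ one gets $\lambda_1\le 3$, and the bound decreases toward $1$ as $\gamma$ grows.

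The only genuine input is the Yang--Yau theorem itself; the rest is bookkeeping. The step I would treat most carefully is citing it with the correct degree bound $\lfloor(\gamma+3)/2\rfloor$ and the correct constant $8\pi$, since versions in the literature differ in form. If there is any doubt, the cruder degree bound $\gamma+1$, available from Riemann--Roch for any genus-$\gamma$ surface, gives
\[
\lambda_1(M,g_0)\le\frac{2(\gamma+1)}{\gamma-1}\le 6 \qquad\text{for }\gamma\ge 2,
\]
with equality only at $\gamma=2$. This yields exactly the stated constant $6$, which is presumably why it appears in the statement. This fallback is fully robust, so I would present the argument with the degree-$(\gamma+1)$ meromorphic function and Hersch's balancing lemma.
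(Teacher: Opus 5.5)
Your proposal is correct, and the version you say you would present is exactly the paper's argument: Yang--Yau with a degree-$(\gamma+1)$ meromorphic function from Riemann--Roch, Gauss--Bonnet for the area, and the elementary inequality $2(\gamma+1)/(\gamma-1)\le 6$ with equality at $\gamma=2$. Rescaling to curvature $-1$ first, rather than carrying $|\kappa|$ through the area formula as the paper does, is purely cosmetic; the gonality refinement $\lfloor(\gamma+3)/2\rfloor$ you mention is also valid and would give the sharper constant $4$.
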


\begin{proof}
By the Yang-Yau inequality \cite{YangYau1980} (Proposition in \S 2), if $M$ admits a conformal branched cover of $S^2$ of degree $d$, then
\[
\lambda_1(M,g)\le \frac{8\pi d}{A},
\]
where $A=\operatorname{Area}(M,g)$.
By the Riemann-Roch theorem, every compact Riemann surface of genus $\gamma$ admits a non-constant meromorphic function of degree at most $\gamma+1$: for any point $p\in M$, $h^0((\gamma+1)p)\ge (\gamma+1)-\gamma+1=2$, giving a non-constant element of $\mathcal{L}((\gamma+1)p)$.
By Gauss-Bonnet, $A=4\pi(\gamma-1)/|\kappa|$, hence
\[
\lambda_1(M,g)
\le
\frac{8\pi(\gamma+1)}{4\pi(\gamma-1)/|\kappa|}
=
2|\kappa|\frac{\gamma+1}{\gamma-1}.
\]
Since $(\gamma+1)/(\gamma-1)\le 3$ for all $\gamma\ge 2$ (with equality at $\gamma=2$), we obtain $\lambda_1\le 6|\kappa|$.
\end{proof}

\subsection{Higher dimensions (\texorpdfstring{$d\ge 3$}{d>=3})}

\begin{proposition}
\label{prop:KM_Cheng}
For each $d\ge 3$ there exists $\Lambda_d>0$ such that any closed $d$-manifold $(M,g)$ of constant sectional curvature $\kappa<0$ satisfies
\[
\lambda_1(M,g)\le \Lambda_d |\kappa|.
\]
\end{proposition}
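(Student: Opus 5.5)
By rescaling we reduce to $\kappa=-1$: if $g$ has curvature $\kappa$, then $|\kappa|g$ has curvature $-1$ and $\lambda_1(M,|\kappa|g)=\lambda_1(M,g)/|\kappa|$. It therefore suffices to find $\Lambda_d$ with $\lambda_1(M)\le\Lambda_d$ for every closed hyperbolic $d$-manifold $M$, $d\ge3$. The strategy, as announced in the abstract, is to exhibit an embedded hyperbolic ball of uniform radius using Kazhdan--Margulis, and then apply a Cheng-type test-function argument based on Bishop--Gromov volume comparison.

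\emph{Step 1 (uniform thick point).} By the Kazhdan--Margulis theorem there is a Margulis constant $\epsilon_d>0$ such that every complete hyperbolic $d$-manifold $M$ has a point $p$ whose injectivity radius satisfies $\mathrm{inj}(p)\ge\epsilon_d/2=:r_0$. For closed $M$ this is standard: the thin part cannot be all of $M$, since by the thick--thin decomposition the $\epsilon_d$-thin part of a closed manifold is a disjoint union of tubes around short geodesics, each with nonempty boundary in the thick part. Consequently the geodesic ball $B(p,r_0)$ is isometric to a ball of radius $r_0$ in $\mathbb{H}^d$, with volume $v_d(r_0)$ depending only on $d$.

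\emph{Step 2 (two disjoint test functions).} Applying the min-max principle to test functions supported in a single ball only bounds $\lambda_1$ if the function can be made orthogonal to constants, which requires a second, disjoint region. I would take two points: $p$ from Step 1, and a point $q$ with $d(p,q)\ge 2r_0$. If no such $q$ exists, then $\diam(M)<2r_0$, so $M=B(p,2r_0)$ and Bishop--Gromov bounds $\Vol(M)\le v_d(2r_0)$. This contradicts the Kazhdan--Margulis/Wang uniform lower bound on volumes of closed hyperbolic $d$-manifolds, once $r_0$ is shrunk if necessary; alternatively one can argue that the injectivity radius at $p$ is at most $\diam(M)$. So for $r_0$ small enough, $q$ exists. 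Let
\[
f_1(x)=\max\bigl(0,1-d(x,p)/r_0\bigr),\qquad f_2(x)=\max\bigl(0,1-d(x,q)/r_0\bigr).
\]
These have disjoint supports.

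\emph{Step 3 (estimate the Rayleigh quotients).} Since $|\nabla f_i|\le 1/r_0$, we have
\[
\int|\nabla f_i|^2\le \frac{\Vol B(\cdot,r_0)}{r_0^2}\le \frac{v_d(r_0)}{r_0^2},
\]
where the bound on $\Vol B(q,r_0)$ comes from Bishop--Gromov with the equality case in curvature $-1$. For the lower bound on the $L^2$ norm, $f_1\ge1/2$ on $B(p,r_0/2)$, which is an embedded hyperbolic ball, so $\int f_1^2\ge v_d(r_0/2)/4$. The difficulty is $f_2$: the point $q$ may lie in the thin part, where $\Vol B(q,r_0)$ can be tiny and there is no lower bound on $\int f_2^2$.

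\emph{Main obstacle.} To avoid needing a lower bound at $q$, I would not combine the two functions with equal weights. Instead, choose a combination $f=af_1-bf_2$ with $\int f=0$. Because the supports are disjoint,
\[
R(f)=\frac{a^2E_1+b^2E_2}{a^2N_1+b^2N_2},\qquad E_i=\int|\nabla f_i|^2,\quad N_i=\int f_i^2 .
\]
This ratio is controlled provided $E_2/N_2$ is bounded, and near $q$ it need not be. The cleaner fix is to choose $q$ thick as well. On a closed hyperbolic manifold of large volume the thick part has large diameter, and when the volume is bounded, compactness (Wang finiteness) handles the remaining cases. An alternative that avoids the second point altogether is Cheng's comparison theorem $\lambda_1(M)\le\lambda_1^D(B(x,\diam/2))$ applied at the thick point $p$, with $\diam\ge 2r_0$ as shown in Step 2. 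Since $B(p,r_0)$ is an embedded hyperbolic ball, its Dirichlet eigenvalue is $\lambda_1^D(B_{\mathbb{H}^d}(r_0))=:\Lambda_d$, and domain monotonicity gives $\lambda_1\le\Lambda_d$. Cheng's bound itself compares with the model ball of the larger radius, but because the two relevant balls around $p$ and around an antipodal point are disjoint, min-max lets us use the thick ball $B(p,r_0)$ for one of them. I expect the remaining step, bounding the contribution of the ball around the antipodal point, to be the delicate point. I would resolve it with Cheng's original argument: for $\lambda_1\le\max$ of the two Dirichlet eigenvalues, bound the second ball's Dirichlet eigenvalue via Bishop--Gromov relative volume growth, using $\Vol B(q,r)/\Vol B(q,r/2)\le v_d(r)/v_d(r/2)$. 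The Rayleigh quotient of the radial cutoff around $q$ is then bounded purely in terms of $d$ and $r_0$, regardless of how thin $q$ is.
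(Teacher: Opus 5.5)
The argument you finally settle on is correct and is essentially the paper's proof. It combines three steps: a Kazhdan--Margulis volume lower bound plus Bishop--Gromov to produce two points at distance at least $2r_0$; radial cutoffs whose Rayleigh quotients are bounded via $\Vol B(x,r_0)/\Vol B(x,r_0/2)\le \Vol B_{-1}(r_0)/\Vol B_{-1}(r_0/2)$; and a mean-zero combination fed into min-max (the paper takes $p,q$ realizing the diameter and uses plateau cutoffs on balls of radius $R_d/2$).

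The Margulis thick point is superfluous, since the relative-volume bound works at every point, so the ``thin $q$'' obstacle never arises. You should also discard the Wang-finiteness fallback: it fails for $d=3$, where Dehn filling yields infinitely many closed hyperbolic $3$-manifolds of bounded volume.
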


\begin{proof}
Rescale $g=|\kappa|^{-1}\tilde g$ so that $(M,\tilde g)$ has curvature $-1$. By a corollary of the Kazhdan-Margulis theorem \cite{KazhdanMargulis}, $\Vol(M,\tilde g)\ge v_d>0$.
Since $M$ is contained in a geodesic ball of radius $\diam(M,\tilde g)$, the Bishop-Gromov comparison gives $v_d\le\Vol(M,\tilde g)\le\Vol(B_{-1}(\diam(M,\tilde g)))$, where $B_{-1}(r)$ denotes a ball of radius $r$ in $\mathbb{H}^d$. Since $r\mapsto\Vol(B_{-1}(r))$ is a strictly increasing function with $\Vol(B_{-1}(r))\to 0$ as $r\to 0$, there exists a unique $R_d>0$ satisfying $\Vol(B_{-1}(R_d))=v_d$, and monotonicity forces $\diam(M,\tilde g)\ge R_d$. Hence $\diam(M,g)\ge R_d|\kappa|^{-1/2}$.

Choose two points $p,q\in M$ with $d_{\tilde g}(p,q)=\diam(M,\tilde g)$. Since
$\diam(M,\tilde g)\ge R_d$, the open balls
\[
U_1:=B_{\tilde g}(p,R_d/2), \qquad U_2:=B_{\tilde g}(q,R_d/2)
\]
are disjoint. For $x_1:=p$ and $x_2:=q$, define Lipschitz cutoffs
\[
\eta_j(x):=
\begin{cases}
1, & d_{\tilde g}(x,x_j)\le R_d/4,\\[2mm]
2-\dfrac{4}{R_d}d_{\tilde g}(x,x_j), & R_d/4<d_{\tilde g}(x,x_j)<R_d/2,\\[2mm]
0, & d_{\tilde g}(x,x_j)\ge R_d/2.
\end{cases}
\]
Then $\eta_j\in H^1(M)$, $\supp \eta_j\subset U_j$, and
$|\nabla \eta_j|\le 4/R_d$ almost everywhere. Hence
\[
\int_M |\nabla \eta_j|^2 \le \frac{16}{R_d^2}\Vol(B_{\tilde g}(x_j,R_d/2)),
\qquad
\int_M \eta_j^2 \ge \Vol(B_{\tilde g}(x_j,R_d/4)).
\]
By Bishop--Gromov,
\[
\frac{\Vol(B_{\tilde g}(x_j,R_d/2))}
     {\Vol(B_{\tilde g}(x_j,R_d/4))}
\le
\frac{\Vol(B_{-1}(R_d/2))}
     {\Vol(B_{-1}(R_d/4))}.
\]
Therefore each $\eta_j$ has Rayleigh quotient bounded by
\[
C_d:=
\frac{16}{R_d^2}
\frac{\Vol(B_{-1}(R_d/2))}
     {\Vol(B_{-1}(R_d/4))}.
\]

Let $a_j:=\int_M \eta_j$, and set
\[
\psi:=a_2\eta_1-a_1\eta_2.
\]
Since the supports are disjoint, $\int_M \psi=0$ and
\[
\frac{\int_M |\nabla \psi|^2}{\int_M \psi^2}
=
\frac{a_2^2\int_M |\nabla \eta_1|^2 + a_1^2\int_M |\nabla \eta_2|^2}
     {a_2^2\int_M \eta_1^2 + a_1^2\int_M \eta_2^2}
\le C_d.
\]
By the min-max characterization, $\lambda_1(M,\tilde g)\le C_d$. Scaling back,
\[
\lambda_1(M,g)=|\kappa|\,\lambda_1(M,\tilde g)\le C_d\,|\kappa|.
\]
Thus the claim holds with $\Lambda_d:=C_d$.
\end{proof}

Together, Proposition~\ref{thm:yang_yau_bound} and Proposition~\ref{prop:KM_Cheng} show that if one insists on $\kappa\equiv -1$, then $\lambda_1$ is universally bounded. Hence any target list with $\lambda_1^*$ above this bound cannot be approximated to arbitrary precision in the normalized curvature class $\kappa\equiv-1$. More generally, any approximating sequence for a fixed target must satisfy $\liminf|\kappa|\ge \lambda_1^*/\Lambda_d$; in particular, accommodating arbitrarily large prescribed $\lambda_1^*$ forces $|\kappa|\to\infty$.

\section{Arbitrarily Precise Prescription of Eigenvalue Ratios}
\label{sec:ratios}

While absolute large eigenvalues are obstructed for $\kappa\equiv -1$, scale-invariant ratios can be prescribed.

\begin{corollary}[Arbitrarily Precise Prescription of Eigenvalue Ratios]
\label{cor:eigenvalue_ratios}
For any $d\ge 2$, any strictly increasing sequence $1=\mu_1^*<\cdots<\mu_n^*$, and any $\varepsilon>0$, there exists a closed $d$-manifold $(M,g)$ with $\kappa\equiv -1$ such that
\[
\left|\frac{\lambda_i(M,g)}{\lambda_1(M,g)}-\mu_i^*\right|<\varepsilon
\qquad (i=1,\dots,n).
\]
\end{corollary}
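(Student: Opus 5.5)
The plan is to deduce the corollary from Theorem~\ref{thm:main} together with the scale invariance of eigenvalue ratios. Given $1=\mu_1^*<\cdots<\mu_n^*$ and $\varepsilon>0$, take as target list $\lambda_0^*=0$ and $\lambda_i^*=\mu_i^*$ for $i=1,\dots,n$. This list is strictly increasing with $\lambda_1^*=1$.

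Choose $\delta>0$ small, to be fixed below, and apply Theorem~\ref{thm:main} with tolerance $\delta$. This gives a closed $d$-manifold $(M,g)$ of constant curvature $\kappa<0$ with $|\lambda_i(M,g)-\mu_i^*|<\delta$ for $0\le i\le n$. Now rescale to $\tilde g:=|\kappa|\,g$. Then $\tilde g$ has curvature $\equiv -1$, and every eigenvalue is divided by the same factor $|\kappa|$, so $\lambda_i(M,\tilde g)/\lambda_1(M,\tilde g)=\lambda_i(M,g)/\lambda_1(M,g)$. Note that the theorem's own construction has $|\kappa|\to\infty$. This causes no trouble, because only ratios enter the statement, and this is exactly why the corollary holds at $\kappa\equiv-1$ despite the obstruction in Section~\ref{sec:obstruction}.

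The only quantitative step is a continuity estimate for the ratio. Suppose $\delta<1/2$. Then $\lambda_1(M,g)>1-\delta\ge 1/2$, so in particular $\lambda_1(M,g)>0$ and the ratio is defined. We also have $\lambda_i(M,g)\le\mu_n^*+1$. Hence
\[
\left|\frac{\lambda_i}{\lambda_1}-\mu_i^*\right|
\le \frac{|\lambda_i-\mu_i^*|+\mu_i^*|1-\lambda_1|}{\lambda_1}
\le 2\delta\,(1+\mu_n^*).
\]
Choosing $\delta<\min\bigl(1/2,\ \varepsilon/(2(1+\mu_n^*))\bigr)$ gives the claimed bound for all $i=1,\dots,n$.

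There is no real obstacle here, since the substance is already in Theorem~\ref{thm:main}. One point to check is that the eigenvalue enumeration is unchanged by the rescaling, which is clear because all eigenvalues are multiplied by the same positive constant. A second point is that orientability is not required, since the corollary makes no such claim.

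In $d=2$ one could even avoid the limit $|\kappa|\to\infty$: the pinching family $\Sigma_\delta$ of Section~\ref{sec:surfaces} already has curvature $-1$, and Theorem~\ref{thm:burger} gives the ratios $\lambda_k(\Sigma_\delta)/\lambda_1(\Sigma_\delta)\to\mu_k/\mu_1$ directly. The rescaling argument above, however, treats all dimensions uniformly.
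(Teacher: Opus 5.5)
Your proof is correct and follows the paper's argument essentially verbatim. You apply Theorem~\ref{thm:main} to the list $(0,\mu_1^*,\dots,\mu_n^*)$ with a tolerance $\delta\le\min\bigl(\tfrac12,\varepsilon/(2(1+\mu_n^*))\bigr)$, rescale by $|\kappa|$ to curvature $-1$ (which leaves ratios unchanged), and bound the ratio error by $2\delta(1+\mu_n^*)$ using $\lambda_1\ge 1/2$; the bound $\lambda_i\le\mu_n^*+1$ you record is not actually needed.
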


\begin{proof}
Set $\delta=\min\left(\frac12,\frac{\varepsilon}{2+2\mu_n^*}\right)$.
Apply Theorem~\ref{thm:main} to the list $\lambda_0^*=0$ and $\lambda_i^*=\mu_i^*$ to obtain $(M,\tilde g)$ with curvature $\kappa<0$ such that $\lambda_i(\tilde g)=\mu_i^*+E_i$ with $|E_i|\le\delta$.
Rescale to $g=|\kappa|\tilde g$, giving curvature $-1$ and $\lambda_i(g)=\lambda_i(\tilde g)/|\kappa|$, so ratios are unchanged:
\[
\frac{\lambda_i(g)}{\lambda_1(g)}=\frac{\mu_i^*+E_i}{1+E_1}.
\]
Then
\[
\left|\frac{\lambda_i(g)}{\lambda_1(g)}-\mu_i^*\right|
=
\left|\frac{E_i-\mu_i^*E_1}{1+E_1}\right|
\le
\frac{|E_i|+\mu_i^*|E_1|}{1-|E_1|}
\le
\frac{\delta(1+\mu_n^*)}{1-\delta}
\le
2\delta(1+\mu_n^*)\le \varepsilon.
\]
\end{proof}

\section{A Concrete Example: Approximating \texorpdfstring{$\{0,1,3\}$}{\{0,1,3\}} in \texorpdfstring{$d=2$}{d=2}}
\label{sec:example}

To illustrate the construction, we use a path graph $P_3: v_1-v_2-v_3$. For this specific two-eigenvalue target, $P_3$ suffices and yields genus $\gamma=3$. (The generic complete-graph construction with $N=\max(n+1,4)=4$ and $K_4$ also gives genus $\gamma=3$; the advantage of $P_3$ is not a genus reduction but the explicit algebraic tractability of the discrete inverse problem on a path graph, which bypasses the full Colin de Verdi\`ere machinery of Lemma~\ref{lem:cdv_complete}. We note that $P_3$ with the vertex volumes below can only realize targets satisfying $\lambda_2^*\ge 2\lambda_1^*$: the trace and product-of-minors equations determine $w_{12},w_{23}$ as roots of a quadratic whose discriminant factors as $\frac{8\pi^2}{9}(2\lambda_1^*-\lambda_2^*)(\lambda_1^*-2\lambda_2^*)$, and since $\lambda_2^*>\lambda_1^*>0$ this is non-negative precisely when $\lambda_2^*\ge 2\lambda_1^*$. For general targets one must use the complete graph $K_N$, which has enough algebraic degrees of freedom to realize \emph{any} strictly increasing list.)

Assign vertex areas:
\[
V_1=V_3=2\pi,\qquad V_2=4\pi,
\]
corresponding to a torus with one hole at $v_1,v_3$ and a torus with two holes at $v_2$.

With edge weights $w_{12},w_{23}>0$, the weighted Laplacian matrix is
\[
 L_G = \begin{pmatrix} \frac{w_{12}}{2\pi} & -\frac{w_{12}}{2\pi} & 0 \\[4pt]
 -\frac{w_{12}}{4\pi} & \frac{w_{12}+w_{23}}{4\pi} & -\frac{w_{23}}{4\pi} \\[4pt]
 0 & -\frac{w_{23}}{2\pi} & \frac{w_{23}}{2\pi} \end{pmatrix}.
\]
Requiring non-zero eigenvalues $1$ and $3$ gives:
\[
\operatorname{tr}(L_G)=\frac{3}{4\pi}(w_{12}+w_{23})=4,
\qquad
\sum \text{principal }2\times2\text{ minors of } L_G = \frac{1}{2\pi^2}w_{12}w_{23}=3.
\]
Solving yields
\[
 w_{12} = \frac{\pi(8 + \sqrt{10})}{3},
 \qquad
 w_{23} = \frac{\pi(8 - \sqrt{10})}{3},
\]
which are strictly positive (numerically, $w_{12}\approx 11.7$, $w_{23}\approx 5.1$).

Glue the three vertex surfaces accordingly. Each collar is an annulus ($\chi=0$), so the total Euler characteristic is $\chi=\chi(X_1)+\chi(X_2)+\chi(X_3)=-1-2-1=-4$, hence genus $\gamma=3$.
Fix a hyperbolic metric on this genus-$3$ surface and pinch the two separating geodesics to lengths $\ell_1(\delta)=\pi\delta w_{12}$ and $\ell_2(\delta)=\pi\delta w_{23}$, holding all remaining Fenchel-Nielsen coordinates fixed.
By Theorem~\ref{thm:burger} (applied to the path graph $P_3$),
\[
\lambda_0(\Sigma_\delta)=0,\qquad \lambda_1(\Sigma_\delta)=\delta+o(\delta),\qquad \lambda_2(\Sigma_\delta)=3\delta+o(\delta).
\]
Rescaling the metric by $\delta$ produces constant-curvature metrics with $\kappa\to -\infty$ and eigenvalues converging to $\{0,1,3\}$.

\subsection*{Acknowledgements} The author is deeply grateful to Anton Petrunin for general advice, particularly for suggesting that the author look at the work of Vedrin \v{S}ahovic \cite{Sahovic}. The author wishes to thank IIT Bombay for providing ideal working conditions.

\end{document}